\let\oldtocsection=\tocsection
\let\oldtocsubsection=\tocsubsection
\let\oldtocsubsubsection=\tocsubsubsection
\renewcommand{\tocsection}[2]{\hspace{0em}\oldtocsection{#1}{#2}}
\renewcommand{\tocsubsection}[2]{\hspace{1em}\oldtocsubsection{#1}{#2}}
\renewcommand{\tocsubsubsection}[2]{\hspace{2em}\oldtocsubsubsection{#1}{#2}}
\definecolor{darkgreen}{RGB}{47,139,79}
\definecolor{darkblue}{RGB}{36,24,130}
\newtheorem{thm}{Theorem}[section]
\newtheorem{lem}[thm]{Lemma}
\newtheorem{prop}[thm]{Proposition}
\newtheorem{ex}[thm]{Example}
\newtheorem{cor}[thm]{Corollary}
\newtheorem{Def}[thm]{Definition}
\newtheorem*{THM}{Main Theorem}
\newtheorem{rem}[thm]{Remark}
\newcommand{\vs}{\vspace{4mm}}
\newcommand{\A}{\mathcal{A}}
\newcommand{\Ann}{\mathcal{A}nn}
\newcommand{\Ai}{\mathcal{A}_\infty}
\newcommand{\Ass}{\mathcal{A}ss}
\newcommand{\C}{\mathcal{C}}
\newcommand{\bC}{\overline{C}}
\newcommand{\tCE}{\widetilde{\mathcal{E}}}
\newcommand{\tbCE}{\widetilde{\mathcal{E}}}
\newcommand{\D}{\mathcal{D}}
\newcommand{\e}{\mathcal{E}}
\newcommand{\FF}{\mathbb{F}}
\newcommand{\F}{\mathcal{F}}
\newcommand{\uk}{\underline{k}}
\newcommand{\LL}{\mathcal{L}}
\newcommand{\M}{\mathcal{M}}
\newcommand{\N}{\mathbb{N}}
\newcommand{\pp}{\mathcal{P}}
\newcommand{\OO}{\mathcal{O}}
\newcommand{\OC}{\mathcal{OC}}
\newcommand{\QQ}{\mathbb{Q}}
\newcommand{\RR}{\mathbb{R}}
\newcommand{\SD}{\mathcal{SD}}
\newcommand{\Z}{\mathbb{Z}}
\newcommand{\Dif}{\textrm{Diff}}
\newcommand{\Hom}{\textrm{Hom}}
\newcommand{\Comp}{\operatorname{Ch}}
\newcommand{\Img}{\operatorname{Im}}
\newcommand{\al}{\alpha}
\newcommand{\be}{\beta}
\newcommand{\tb}{\tilde\beta}
\newcommand{\ga}{\gamma}
\newcommand{\Ga}{\Gamma}
\newcommand{\De}{\Delta}
\newcommand{\la}{\lambda}
\newcommand{\Om}{\Omega}
\newcommand{\s}{\sigma}
\newcommand{\Si}{\Sigma}
\newcommand{\rar}{\longrightarrow}
\newcommand{\inc}{\hookrightarrow}
\newcommand{\sta}{\stackrel}
\newcommand{\arsim}{\sta{\simeq}{\rar}}
\newcommand{\x}{\times}
\newcommand{\ot}{\otimes}
\newcommand{\w}{\wedge}
\newcommand{\lgl}{\langle}
\newcommand{\rgl}{\rangle}
\newcommand{\del}{\partial}
\newcommand{\emp}{\varnothing}
\newcommand{\Fun}{\operatorname{Fun}}
\newcommand{\Obj}{\operatorname{Obj}}
\newcounter{samcounter}
\newcommand{\oc}[2]{[\begin{subarray}{c} #2 \\ #1 \end{subarray}]}
\begin{document}

\bibliographystyle{plain}

\title{Hochschild homology of structured algebras}

\author{Nathalie Wahl}
\author{Craig Westerland}

\date{\today}

\begin{abstract}
We give a general method for constructing explicit and natural 
operations on the Hochschild complex of algebras over any prop with $\Ai$--multiplication---we think of such algebras as 
 $\Ai$--algebras ``with extra structure''. 
As applications, we obtain an integral version of the
Costello-Kontsevich-Soibelman moduli space action on the Hochschild complex of open TCFTs, the Tradler-Zeinalian 
action of Sullivan diagrams on the Hochschild complex of strict Frobenius algebras, and give applications
to string topology in characteristic zero. 
Our main tool is a generalization of the Hochschild complex. 
\end{abstract}

\maketitle

The Hochschild complex of an associative algebra $A$ admits a degree 1 self-map, Connes-Rinehart's boundary operator $B$.  If $A$ is Frobenius, the (proven) cyclic Deligne conjecture says that $B$ is the $\Delta$--operator of a BV-structure on the
Hochschild complex of $A$.  In fact $B$ is part of much richer structure, namely 
an action by the chain complex of Sullivan 
diagrams on the Hochschild complex \cite{Tradler-Z,KaufI,KaufII,Kau10}. A weaker version of Frobenius algebras, called here $\Ai$--Frobenius algebras, yields instead an action by the chains on the moduli space of Riemann
surfaces \cite{costello07,KS06,KaufI,KaufII}. Most of these results use a very appealing recipe for constructing such operations  introduced by Kontsevich in \cite{Kon94}.  Starting from a model for the moduli of curves in terms of the combinatorial data of fatgraphs, the graphs can be used to guide the local-to-global construction of an operation on the Hochschild complex of an $\Ai$-Frobenius algebra $A$ -- at every vertex of valence $n$, an $n$-ary trace is performed. 

In this paper we develop a general method for constructing 
explicit operations on the Hochschild complex of  $\Ai$--algebras ``with extra structure'', which contains these theorems as special cases.  
In constrast to the above, our method is {\em global-to-local}: we give conditions on a composable collection of operations
that ensures that it acts on the Hochschild complex of algebras of a
given type; by fiat these operations preserve composition, something
that can be hard to verify in the setting of \cite{Kon94}.  
After constructing the operations globally, we then show how to read-off the action explicitly, so that formulas for individual operations can also be obtained. Doing this we recover the same formuli as in the local-to-global approach. 
Our construction can be seen as a formalization and extension of the method of \cite{costello07} which considered the case of $\Ai$--Frobenius algebras. 

Our main result, which we will explain now in more details, gave rise to new computations, including a complete description of the operations on the Hochschild complex of commutative algebras \cite{Kla13A}, a description of a large complex of operations on the Hochschild complex of commutative Frobenius algebras \cite{Kla13B} and a description of the universal operations given any type of algebra \cite{Wah12}.  

\smallskip

An $\Ai$--algebra can be described as an enriched symmetric
monoidal functor from a certain dg-category $\Ai$ to $\Comp$, the dg-category of chain complexes over $\Z$.
The category $\Ai$ is what is called a {\em dg-prop}, a symmetric monoidal dg-category with objects the natural numbers. 
We consider here more generally dg-props $\e$ equipped with a dg-functor 
$i:\Ai\to\e$. Expanding on the terminology of McClure--Smith \cite{McCSmi02}, we call such a pair $\e=(\e,i)$ a {\em prop with $\Ai$-multiplication}. 
An $\e$--algebra is a symmetric monoidal dg-functor $\Phi:\e\to\Comp$. When $\e$ is a prop with $\Ai$--multiplication, 
 any $\e$--algebra comes with a specified $\Ai$--structure by restriction along $i$, and hence we can talk about the Hochschild complex of $\e$--algebras. 

We introduce in the present paper a generalization of the Hochschild complex which assigns to {\em any} dg-functor $\Phi:\e\to \Comp$ 
 a certain new functor $C(\Phi):\e\to\Comp$. The assignment has the
property that, for $\Phi$ symmetric monoidal,
$C(\Phi)$ evaluated at 0 is the usual Hochschild complex of the underlying $\Ai$--algebra. 
(The evaluation of $C(\Phi)(n)$ can more generally be interpreted in terms of higher Hochschild homology as in \cite{Pir00} associated to the union of a circle and
$n$ points.) This Hochschild complex construction can be iterated, and 
for $\Phi$ split monoidal\footnote{i.e.~such that the maps $\Phi(n)\ot\Phi(m)\to\Phi(n+m)$
  are isomorphisms (also known as {\em strong monoidal})}, 
the iterated complex $C^n(\Phi)$ evaluated at $0$ is the $n$th tensor power $(C(\Phi)(0))^{\ot n}$. 

Our main theorem, Theorem~\ref{action},  says that if the iterated Hochschild complexes of the functors $\Phi = \e(e,-)$ admit a natural action of a
dg-prop $\D$  of the form 
$$ C^n(\e(e,-))\ot \D(n,m) \to C^m(\e(e,-))$$
then the classical Hochschild complex of any split monoidal functor $\Phi:\e\to \Comp$ is a $\D$--algebra, i.e. there are maps
 $$ \big(C(\Phi)(0)\big)^{\ot n}\ot \D(n,m) \to \big(C(\Phi)(0)\big)^{\ot m}$$
associative with respect to composition in $\D$. 
This action is given explicitly and is natural in $\e$ and $\D$.

\smallskip

Before stating the theorem in more detail, we describe some consequences.
Let $\OO$ denote the {\em open cobordism category}, whose objects are the natural numbers and whose morphisms from $n$ to $m$ are chains
on the moduli space of the Riemann surfaces that are cobordisms from $n$ to $m$ intervals (or ``open strings''). 
Taking $\e=\OO$ and $\D=\C$, the closed co-positive\footnote{where the components of morphism each
   have at least one {\em incoming} boundary}  boundary cobordism category, Theorem~\ref{action} gives an integral version of Costello's main theorem in
 \cite{costello07}, i.e.,  an action of the chains of the moduli space of Riemann surfaces on the Hochschild chain complex of any {\em $\Ai$--Frobenius
   algebra}\footnote{called an {\em extended
  Calabi-Yau $\Ai\!$ category} in \cite{costello07}}. 
(See Theorem \ref{CosThm} and Corollary \ref{detCor}.)
Reading off our action on the Hochschild chains,  we
recover the recipe for constructing such an action given by Kontsevich and Soibelman in \cite{KS06}, thus tying these two pieces of work together.
We also get a version for non-compact\footnote{loosely, these are non-counital $\Ai$--Frobenius algebras} $\Ai$--Frobenius algebras by replacing $\OO$ by the positive boundary\footnote{where the components of morphism
  each have at least one {\em outgoing} boundary} open cobordism category and $\C$ with the positive and co-positive boundary category. (See Corollary \ref{posCor}.)

Applying Theorem~\ref{action} to the category $\e=H_0(\OO)$, we obtain an action of the chain complex of Sullivan diagrams on the
Hochschild complex of strict symmetric Frobenius algebras, recovering, with very different methods and after dualization, 
the main theorem of Tradler-Zeinalian in \cite{Tradler-Z}, see also \cite{KaufI,KaufII,Kau10}.  (See Theorem \ref{strict}.) 
In particular, in genus 0, this gives the cyclic Deligne conjecture first proved in \cite{kaufmann}, see also \cite{paolo_cyclic}. (See Proposition~\ref{std_BV_prop}.) 
Again, there is a non-compact version, which includes the operations later constructed in \cite{Abb13B} using Kontsevich's method. 

A consequence of our naturality statement, Theorem~\ref{factorization}, is that the aforementioned HCFT structure constructed by Costello and
Kontsevich-Soibelman factors through an action of Sullivan diagrams, when the $\Ai$--Frobenius algebra happens to be
strict. Sullivan diagrams model the harmonic compactification of moduli space \cite[Prop 5.1]{EgaKup}, so one can say that the action of moduli space
compactifies in that case. New operations arise from the compactification, and we know that these act non-trivially already on very basic Frobenius
algebras \cite[Prop 4.1 and Cor 4.2]{Wah12}.  
On the other hand,  a significant part of the homology of moduli space dies in the compactification, in particular the stable classes, which implies a
significant collapse of the original structure when the algebra is strict. 
(See Proposition~\ref{strictvanish} and Corollary \ref{factor_action_cor}.) 

We apply the above to the case of string topology for a simply-connected manifold $M$ over a field of characteristic zero, using the
strict Frobenius model of $C^*(M)$ given by Lambrechts-Stanley \cite{lambrechts_stanley,felix_thomas}, 
and obtain an HCFT structure  on $H^*(LM,\QQ)$
factoring through an action of Sullivan diagrams. 
We show in Proposition \ref{string_prop} that our construction recovers the BV structure on $H_*(LM)$ originally
introduced by Chas-Sullivan. The vanishing of the action of the stable classes in the HCFT structure furthermore agrees with
Tamanoi's vanishing result in \cite{tamanoi}. These vanishing results should though be contrasted with the non-vanishing results of \cite{Wah12} for
classes coming from the compactification, with non-trivial higher operations existing already on $H^*(LS^n)$. 
A different approach to moduli space or Sullivan diagram actions on $H^*(LM)$ can be found in \cite{godin07,DPR15,PoiRou,Poi10} (see also \cite{ChaSul04} in the equivariant setting). 
The papers  \cite{Cha05,CohGod} construct string topology actions
using a more restricted definition of Sullivan diagrams. It is natural to conjecture that these geometrically defined string topology operations are likewise  compatible with ours under the characteristic zero assumption. Yet a different approach is given in \cite{KaufI,KaufII,Kau10}, where Kaufmann constructs an action of Sullivan diagrams on the $E^1$--page of a spectral sequence converging to $H_*(LM)$. 
(The prop of open-closed Sullivan diagrams defined in \cite{Kau10} has its closed part isomorphic to the Sullivan diagrams considered
here, see Remark~\ref{KaufSD}.)

Further applications of our methods in the case of commutative and commutative Frobenius algebras where obtained by Klamt in \cite{Kla13A,Kla13B}. 
Other interesting examples of families of algebras to consider would be algebras over Kaufmann's prop of open Sullivan diagrams \cite{Kau10} (see Section~\ref{KP_section}), Hopf algebras, Poisson algebras and $E_n$--algebras, to name a few.

\medskip

We now describe our set-up and tools in a little more detail and give a more precise formulation of the main theorem. 

\smallskip

Recall from above that $\e$ is a dg-category, in fact a dg-prop, equipped with a functor $i:\Ai\to \e$, which will always be assumed to be the
identity on the objects, the natural numbers. 
Recall also that the Hochschild complex of a functor $\Phi:\e\to\Comp$ is defined here as a new functor
$C(\Phi):\e\to\Comp$.

To any such dg-category $\e$, we associate in this paper a larger dg-category, its {\em Hochschild core category} $C\e$.  The category $C\e$  
has objects pairs of natural numbers $\oc{m}{n}$, 
has $\e$ as a full subcategory on the objects $\oc{m}{0}$, and with the
morphisms from 
$\oc{m_1}{0}$ to $\oc{m_2}{n}$ the iterated Hochschild complex $C^n(\e(m_1,-))$ evaluated at $m_2$. 
If $\e$ is the open cobordism category
$\OO$, then $C\e$ is the open-to-open and open-to-closed part of the open-closed cobordism category.  
Given a monoidal category $\tCE$ with the same objects as $C\e$, we call it an {\em extension
of $C\e$} if it agrees with $C\e$ on the morphisms with source $\oc{m}{n}$ when $n=0$. An extension of $C\e$ can be thought of as the full
open-closed cobordism category, also including the closed-to-closed and closed-to-open morphisms.

\begin{THM}[Theorem~\ref{action} for $\Phi$ split symmetric monoidal, $C$ unreduced]  
Let $(\e,i)$ be a prop with $\Ai$--multiplication and $C\e\inc \tCE$ 
an extension of $C\e$ in the above sense. Then $\tCE$ acts naturally on the Hochschild complex of $\e$--algebras:
For any $\e$--algebra $A$ with
$C(A,A)$ its Hochschild complex,   
there are chain maps 
$$C(A,A)^{\ot n_1}\ot A^{\ot m_1}\ot \tCE(\oc{m_1}{n_1},\oc{m_2}{n_2})\ \rar\ C(A,A)^{\ot n_2}\ot A^{\ot m_2}$$
which are natural in $A$ and associative with respect to composition in $\tCE$. 
\end{THM}

The same holds for a reduced version of the Hochschild complex. \\

\noindent {\bf How to apply the theorem in practice.} This theorem applies to any prop  with $\Ai$--multiplication $\e$ and chosen extension $\tCE$. 
In practice, one starts with such an $\e$, which is the prop describing the type of algebra one is interested in. One can then construct its Hochschild core category $C\e$. This category is a bi-colored prop built to act on the pair $(A,C_*(A,A))$ for any $\e$--algebra $A$. However, it only has non-trivial operations of the form $A^{\ot m_1}\to C(A,A)^{\ot n_2}\ot A^{\ot m_2}$, encoded as morphisms from $\oc{m_1}{0}$ to $\oc{m_2}{n_2}$. To construct operations on 
the Hochschild complex of $\e$--algebras, one then needs to enhance the bi-colored prop $C\e$ to a larger category $\tCE$ that also has non-trivial morphisms with source  $\oc{m_1}{n_1}$ for at least some $n_1\neq 0$. The theorem is thus saying that it suffices for the prop $\tCE$ to contain $C\e$ to ensure that $\tCE$ defines natural operations on the Hochschild complex of $\e$--algebras. 

For each of the applications discussed above we have explicit such
extension categories $\tCE$, and the prop $\D$ mentioned above in each case is the
 ``closed-to-closed'' part of $\tCE$. 
These categories $\tCE$ are constructed using ad hoc methods coming from the geometry of the
situation. 
Given any prop with $\Ai$--multiplication $(\e,i)$, there exists a universal 
extension  which is much larger than the extensions considered here (see \cite{Wah12}). In the particular cases where $\e=\OO$ or $H_0(\OO)$, it is however shown in \cite[Rem 2.4 and
Thm B,C]{Wah12} that the universal extension is quasi-isomorphic to the props constructed here, so on the level of homology our small models do actually give all the operations.

\smallskip

The proof of the main theorem, inspired by, though independent of, \cite{costello07}, 
uses simple properties of the double bar construction, and a quotiented version
of it to take care of the equivariant version of the theorem under the action of the symmetric groups. Our action is explicit thanks to the
construction of an explicit pointwise chain homotopy inverse to the quasi-isomorphism of functors $C(B(\Phi,\e,\e))\to C(\Phi)$.
(See Proposition~\ref{beta}.)
As an example of how our theory can be applied, we give in Section \ref{strict_section} explicit formulas for the product, coproduct, and $\Delta$--operator on the Hochschild complex of strict Frobenius
algebras.

\medskip

The paper is organized as follows: 
Section~\ref{FatTrees} introduces the chain complexes of graphs used throughout the paper. In particular, our graph model for the open-closed
cobordism category and a category of Sullivan diagrams are constructed and studied in this section. Section~\ref{Algsec} gives some background on types of algebras occurring in
the paper. The short Section~\ref{Bar} reviews a few
properties of the double bar construction and its quotiented analog.  
Section~\ref{HComp} then defines the Hochschild complex operator, examines its properties, and proves
the main theorem. 
Section~\ref{ex}  gives applications: 
Section \ref{Cos}
gives the application to Costello's theorem, and Section \ref{ks_section} describes how to deduce the Kontsevich-Soibelman approach
from it. Sections \ref{twist_section} and \ref{pos} take care of the twisting by the determinant bundle and the positive boundary
variation. In  Section \ref{strict_section}, we treat the case of strict Frobenius algebras and Sullivan diagrams, with the application to
string topology given in Section \ref{strings}. Section~\ref{KP_section} gives the relationship the Kaufmann--Penner model for {\em string interaction}.
Finally, Sections \ref{Ai} and \ref{AxP} consider $\Ai$ and $\Ass\times\pp$--algebras
for $\pp$ an operad. 
Section \ref{conventions} sets up some notation and the 
Appendix (Section \ref{signsub}) explains how to compute signs given operations represented by graphs. 

\vs

We would like to thank Alexander Berglund, Kevin Costello, Daniela Egas,  Richard Hepworth, Ralph Kaufmann,  Anssi Lahtinen and Bob Penner for helpful algebraic, geometric and twisted conversations.  The first author
was supported by the Danish National Sciences Research Council (DNSRC) and
the European Research Council (ERC), as well as by
the Danish National Research Foundation (DNRF) through the Centre for Symmetry and Deformation.   The second author was supported by the Australian Research Council (ARC) under the Future Fellowship and Discovery Project schemes.  He thanks the University of Copenhagen for its hospitality and support.

\newpage

\tableofcontents

\section{Conventions  and terminology}\label{conventions}

In the present paper, we work in the category $\Comp$ of chain complexes over $\Z$, unless otherwise specified. 
We use the usual sign conventions so that the differential $d_V+d_W$ on a tensor product $V\ot W$ is 
$(d_V+d_W)(v\ot w)=d_V(v)\ot w+(-1)^{|v|}v\ot d_W(w)$. 

By a {\em dg-category}, we mean a category $\e$ whose morphism sets are chain complexes and whose 
composition maps $\e(m,n)\ot\e(n,p)\to \e(m,p)$ are chain maps. 
A {\em dg-functor} $\Phi:\e\to \Comp$  is a functor such that the structure maps $$c_\Phi:\Phi(m)\ot\e(m,n)
\to \Phi(n)$$ are chain maps.\footnote{Equivalently, $\e$ is a category enriched in $\Comp$, and $\Phi$ is an enriched functor.} For example, given any $r\in \Obj(\e)$, the functor $\Phi(m)=\e(r,m)$ 
represented by $r$ is a dg-functor.

\section{Graphs and trees}\label{FatTrees}

In this section, we give the background definitions about graphs, chain complexes of graphs etc.~ necessary for the rest of
the paper. In particular, we define black and white graphs and use
them to give graph models of the moduli space of Riemann surfaces, and define the {\em open cobordism category} $\OO$, the {\em open-closed
cobordism category} $\OC$ and the {\em category of Sullivan diagrams} $\SD$. 

Fat graphs were defined to give a combinatorial model of Teichm{\" u}ller space and  moduli space \cite{Pen87, BowEps}. Originally, these were considered for surfaces with punctures and later adapted to also model surfaces with fixed boundary components \cite{Pen04, Cos07, God07}. To relate these to the open and open-closed cobordism categories, we need to additionally be able to model the maps induced on moduli spaces by the gluing of surfaces along boundary intervals and circles. The gluing along boundary circles is easiest to describe using an asymmetrical model: our incoming and outgoing boundaries will be specified very differently. Indeed, the outgoing boundary circles will be given by {\em white vertices} while the incoming boundary circles will be identified as edge cycles in the graph. (A symmetrized model of this cobordism category exists (see \cite[Thm 3.1]{Wah12}) but will not be relevant here.) 

The model we use here is that of Costello \cite{Cos07, costello07} and Kontsevich-Soibelman \cite{KS06} (slightly reformulated), as this is the one that naturally occurs when considering the Hochschild complex of open field theories. These authors' work naturally includes a model for gluing along boundary intervals; however they did not study the gluing of surfaces along boundary circles. For this, we will use the work of Egas \cite{Ega15}. 
Kaufmann-Penner have proposed in \cite{KauPen} a different (partially defined) gluing operation on moduli space to model open and closed string interactions.  We defer to Section~\ref{KP_section} a discussion of the  differences of these approaches.

We will also be interested in a category of Sullivan diagrams that arises when considering the Hochschild complex of strict Frobenius algebras. The work of Egas-Kupers \cite{EgaKup} shows that this category yields a model of the harmonic compactification of moduli space. Sullivan diagrams are classically defined as equivalence classes of fat graphs made out of circles and chords. We will show here that they can be seen as equivalence classes of black and white graphs, which will allow us to define our category of Sullivan diagrams as a quotient of our cobordism category. Sullivan diagrams can moreover be described, as in the work of Kaufmann \cite{KaufI, KaufII}, in terms of arc systems in surfaces (see Remark~\ref{KaufSD}).  

\medskip

We start the section by defining fat graphs, which will be used to model the open cobordism category, and then extend them to {\em black and white graphs}, that will model the open-closed cobordism category, both categories being defined subsequently. We define the subcategories $\Ai$ and $\Ai^+$ that will be used in Section~\ref{Algsec} to define $\Ai$-- and unital $\Ai$--algebras. We also define a subcategory of Annuli that will play a role in defining the Hochschild complex in Section~\ref{HComp}.
At the end of the section, we define a category of Sullivan diagrams and study its relationship to the open-closed cobordism category.

\subsection{Fat graphs}
By a {\em graph} $G$ we mean a tuple $(V,H,s,i)$ where $V$ is the set of {\em vertices}, $H$ the set of {\em half-edges}, $s:H\to V$ is
the {\em source map} and $i:H\to H$ is an involution. Fixed points of the involution are called {\em leaves}. A pair $\{h,i(h)\}$ with
$i(h)\neq h$ is called an {\em edge}. We will consider graphs with vertices of any valence, also valence 1 and 2. 

We allow the empty graph. We will also consider the following degenerate graphs which fail to fit the
above description:

\begin{itemize} 

\item The {\em leaf} 
consisting of a single leaf and no vertices.  
\item The {\em circle} with no vertices. 

\end{itemize}
The leaf will appear in two flavors: as a {\em singly labeled leaf} and as a {\em doubly labeled leaf}. The circle will arise from gluing the doubly
labeled leaf to itself.

\smallskip

A {\em fat graph} is a graph $G=(V,H,s,i)$ together with a cyclic ordering of each of the sets $s^{-1}(v)$ for
$v\in V$. 
The cyclic orderings define {\em boundary cycles} on the graph, which are sequences of consecutive half-edges
corresponding to the boundary components of the surface that can be obtained by thickening the graph.
Figure~\ref{graphsex}(a) shows an example of a
fat graph with two boundary cycles (the dotted and dashed lines), where the cyclic ordering at vertices is that inherited from the plane.
 (Formally, if $\s$ is the permutation of
 $H$ whose cycles are the cyclic orders at each vertex of the graph, then the boundary cycles of $G$ are the cycles of the permutation
$\s.i$ \cite[Prop 1]{God07}.)

\begin{figure}[h]
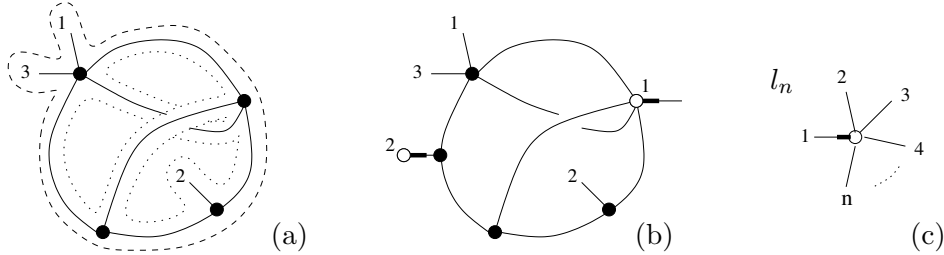

\begin{lpic}{graphs3(0.6,0.6)}
 \lbl[b]{62,0;(a)}
 \lbl[b]{142,0;(b)}
 \lbl[b]{202,0;(c)}
\lbl[b]{170,35;$l_n$}
\end{lpic}
\caption{Fat graph and black and white graphs.}\label{graphsex}
\end{figure}

\subsection{Orientation}
An {\em orientation} of a graph $G$ is a unit vector in $\operatorname{det}(\RR (V\sqcup H))$. 
The degenerate graphs have a canonical formal positive orientation. 
Note moreover that any odd-valent (in particular trivalent) graph has a canonical orientation
$$v_1\w h^1_1\w\dots\w h^1_{n_1}\w\dots\w v_k\w h^k_1\w\dots\w h^k_{n_k}$$
where $v_1,\dots,v_k$ is a chosen ordering of the vertices of the graph and $h^i_1,\dots,h^i_{n_i}$ is the set of half-edges at $v_i$
in their cyclic ordering.

\subsection{Black and white graphs}\label{bargraphs}
A {\em black and white graph} is a fat graph whose set of vertices is given as $V=V_b\coprod V_w$, with $V_b$ the set of {\em black vertices} and $V_w$ the set
of {\em white vertices}. 
The {\em white vertices} are labeled $1,2,\dots,|V_{w}|$ and are allowed to be of any valence (also 1 and 2). 
The {\em black vertices} are unlabeled and must be at least trivalent. 
Moreover, each white vertex is equipped with a choice of {\em start half-edge}, i.e.~a choice of an element in 
$s^{-1}(v)$ for each  $v\in V_w$.  Equivalently,  the set of half-edges $s^{-1}(v)$ at each white vertex $v$ has an
actual ordering, not just a cyclic ordering.  

We define a {\em $\oc{m}{p}$--graph} to be a black and white graph with $p$ white vertices and $m$ leaves labeled $\{1,\dots,m\}$. 
A   {\em $\oc{m}{p}$--graph} may have additional unlabeled leaves if they are the start half-edge of a white vertex. 
Figure~\ref{graphsex}(b) shows an example of a $\oc{3}{2}$--graph, with the start half-edges marked by
thick lines.

To define the Hochschild complex, we will use the $\oc{n}{1}$--graph, denoted $l_n$, depicted in Figure~\ref{graphsex}(c) 
which has a single vertex which is white, and $n$ leaves labeled
cyclically, with the first leave as start half-edge. (As $l_n$ has only one white vertex, we drop its label which is automatically $1=|V_w|$.)

A $\oc{m}{0}$--graph is just an ordinary fat graph whose vertices are at least trivalent and which has $m$ labeled leaves. 
Figure~\ref{graphsex}(a) gives an example of a $\oc{3}{0}$--graph. 

\smallskip

We will consider isomorphism classes of black and white graphs. If the graphs are oriented or have labeled leaves, we always assume this is preserved
under the isomorphism. 
Note that when two black and white graphs are isomorphic, the isomorphism is unique whenever  each component of 
the graph has at least one labeled leaf or at least one
white vertex: starting with the leaf or the start half-edge of the white vertex, and using that the cyclic orderings at vertices are preserved, one
can check by going around the corresponding component of the graph that the isomorphism is completely determined.

\subsection{Edge collapses and blow-ups}\label{col-blow}
For a black and white graph $G$ and an edge $e$ of $G$ which is {\em not a cycle} and {\em does not join two white vertices}, 
we denote $G/e$ the set of isomorphism classes of black and white graphs that can be obtained from $G$ by collapsing the edge
$e$, identifying its two end-vertices, declaring the new vertex to be white with the same label if one of the collapsed vertices was white---in
particular, the number of white vertices is constant under edge collapse.  
Graphs in $G/e$ have naturally induced cyclic orderings at their vertices. If the new vertex is black, the collapse is unique. 
If the new vertex is white, it has a well-defined start half-edge unless the start half-edge of the original white
vertex is collapsed with $e$, in which case  there is  a collection of possible collapses of $G$ along $e$, one
for each choice of placement of the start half-edge at the new white vertex among the leaves originating from the collapsed black vertex of the
original graph $G$. (See Figure~\ref{collapse} for an example.)
\begin{figure}[h]
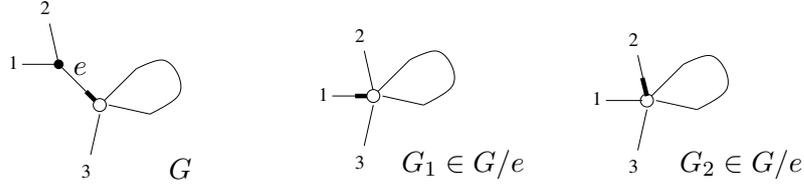

\begin{lpic}{collapse(0.6,0.6)}
 \lbl[b]{38,0;$G$}
 \lbl[b]{16,23;$e$}
\lbl[b]{100,0;$G_1\in G/e$}
\lbl[b]{161,0;$G_2\in G/e$}
\end{lpic}
\caption{The two possible collapses of $e$ in $G$}\label{collapse}
\end{figure}

If $G$ is oriented, the graphs in $G/e$ inherit an orientation as follows: If $e=\{h_1,h_2\}$ with $s(h_1)=v_1$, $s(h_2)=v_2$, and  
writing the orientation of $G$ in the form $v_1\w v_2\w h_1\w h_2\w x_1\w \dots\w x_k$,
we define the 
orientation of the collapsed graph to be  $v\w x_1\w \dots \w x_k$,
where $v$ is the vertex of the collapsed graph coming from identifying $v_1$ and $v_2$.

\medskip

For an (oriented) black and white graph $G$, 
we call an (oriented) black and white graph $\tilde{G}$ a {\em blow-up} of $G$ if there exists an edge $e$ of $\tilde{G}$ such that $G\in\tilde{G}/e$. 
The first line in Figure~\ref{dL3} shows all the possible blow-ups of the graph $l_3$.

\subsection{Chain complex of black and white graphs}\label{chaincpx}
Let $BW-{Graphs}$ denote the chain complex generated as a $\Z$--module 
by isomorphism classes of (not necessarily connected, possibly degenerate) oriented black and white graphs, 
modulo the relation that $-1$ acts by reversing the orientation.
The {\em degree} of a black and white graph is 
$$\deg(G)=\sum_{v\in V_b}(|v|-3)+\sum_{v\in V_w}(|v|-1),$$ where $|v|$ denotes the valence of $v$.  The degenerate graphs have degree
0.  
The map 
 $$\hat d\colon  G\mapsto \sum_{\begin{subarray}{l}\ \,(\tilde{G},e)\\ G\in\tilde{G}/e \end{subarray}} \tilde{G}$$ 
summing over all blow-ups of $G$ defines a differential on   $BW-{Graphs}$. Indeed, 
 we have 
$$(\hat d\,)^2(G)=\sum_{\begin{subarray}{l}\ \,(\tilde{G},e)\\ G\in \tilde{G}/e\end{subarray}} 
\Big(\sum_{\begin{subarray}{l}\ \,(\widehat{G},f)\\ \tilde{G}\in \widehat{G}/f\end{subarray}}
  \widehat{G}\Big)=
\sum_{\begin{subarray}{l}\ \,(\widehat{G},f,e)\\ G\in\widehat{G}/(f,e)\end{subarray}} 
   \widehat{G}$$ 
as any pair $(\tilde G,e),(\hat G,f)$ as above defines a triple $(\hat G,f,e)$ taking $e\in \hat G$ to be the inverse image of $e\in \tilde G$ under the collapse of $f$, and conversely, given a triple $(\hat G,f,e)$ as above, there is a unique $\tilde G$ in $\hat G/f$ with the property that $G\in \tilde G/e$. Indeed, if $\hat G/f$  contains several elements, they only differ by the placements of the start half-edge at a newly created white vertex, but only one of these placements can be  compatible with the start half-edge of $G$ at that vertex, also if $e$ defines a further collapse of a black vertex to that white vertex. The fact that $\hat d^2=0$ then follows from checking that the
orientations of $\widehat{G}/f/e$ and $\widehat{G}/e/f$ are opposite so that each term   $(\widehat{G},f,e)$ cancels with the term
$(\widehat{G},e,f)$. 

\smallskip

Let $\oc{m}{p}-{Graphs}$ now denote the chain complex generated as a $\Z$--module 
by isomorphism classes of (not necessarily connected, possibly degenerate) oriented $\oc{m}{p}$--graphs, 
modulo the relation that $-1$ acts by reversing the orientation. Recall that $\oc{m}{p}$--graphs are black and white graphs with $p$ white vertices
and $m$ labeled leaves, and that the only unlabeled leaves allowed in $\oc{m}{p}$--graphs are those which are start half-edge of a white vertex. 

A black and white graph $G$ with $p$ white vertices and $m$ labeled leaves has an underlying $\oc{m}{p}$--graph $\lfloor G\rfloor$ 
defined by 
 $\lfloor\tilde{G}\rfloor=\tilde{G}$ unless $\tilde{G}$ has unlabeled
leaves which are not the start half-edge of a white vertex. In such a leaf $l$ is attached at a  trivalent black vertex $v$, 
the vertex $v$ and the leaf are forgotten in $\lfloor\tilde{G}\rfloor$, and if such a leaf is attached at a white vertex (which will automatically be
at least bivalent) or at black vertex of valence at least 4,
we set $\lfloor\tilde{G}\rfloor=0$. 
The orientation of $\lfloor\tilde{G}\rfloor$ when $\lfloor\tilde{G}\rfloor\neq\tilde{G}$ (or 0) is obtained by first rewriting the
 orientation of $G$ in the form $v\w l\w h_1\w h_2\w\dots$ for $s^{-1}(v)=(l,h_1,h_2)$ in that cyclic ordering, and then removing the first 4
 terms.

We now define  the differential on  $\oc{m}{p}-{Graphs}$ as  $dG=\lfloor
\hat dG\rfloor$. 
Figure~\ref{dL3} shows three examples of differentials. 
\begin{figure}[h]
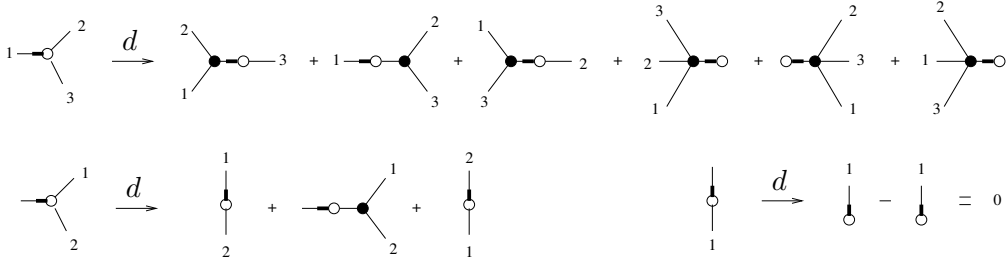

\begin{lpic}{L3.2(0.5,0.5)}
\lbl[l]{31,58;$d$}
\lbl[l]{32,19;$d$}
\lbl[l]{202,21;$d$}
\end{lpic}
\caption{Differential applied to the graph $l_3$ and to two graphs with an unlabeled start-leaf.}\label{dL3}
\end{figure}

\begin{lem}\label{floor}
$d\lfloor G\rfloor=\lfloor \hat dG\rfloor$.
\end{lem}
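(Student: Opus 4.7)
The plan is to proceed by case analysis on whether $G$ has extra unlabeled leaves (half-edges fixed by the involution that are neither labeled nor start half-edges of white vertices) and, if so, whether those leaves are well-placed. The key structural observation I rely on is that a blow-up neither creates nor destroys leaves: since the new edge $\{e_w,e_b\}$ produced by a blow-up is not a fixed point of the involution, the leaves of $\tilde G$ are exactly those of $G$, and none of them can become the start half-edge of a white vertex (the only new start half-edge that can appear is $e_w$ itself). Consequently every extra unlabeled leaf of $G$ remains an extra unlabeled leaf in every blow-up $\tilde G$, attached either to its original vertex or to one of the two vertices produced by blowing that vertex up.

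If $G$ is itself an $\oc{m}{p}$-graph, every blow-up is also an $\oc{m}{p}$-graph and the identity is the defining equation $dG=\lfloor\hat dG\rfloor$. If $G$ has extra leaves $l_1,\dots,l_r$ but $\lfloor G\rfloor\neq 0$, then by definition each $l_i$ sits at a trivalent black vertex $v_i$ and $\lfloor G\rfloor$ is gotten by deleting each pair $(v_i,l_i)$ and splicing the two remaining half-edges of $v_i$ into a single edge. First I would note that no blow-up of $G$ is supported at any $v_i$: splitting a trivalent black vertex into two black vertices both of valence $\geq 3$ would require $|S_1|+|S_2|=3$ with $|S_1|,|S_2|\geq 2$, which is impossible. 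Every other blow-up $\tilde G$ leaves each pair $(v_i,l_i)$ untouched, so $\lfloor\tilde G\rfloor$ is obtained by removing these same pieces after blowing up elsewhere. Since the blow-up and the removals then act on disjoint sets of vertices and half-edges, the two orientation conventions commute, and the result is an orientation-preserving bijection between blow-ups of $G$ and blow-ups of $\lfloor G\rfloor$, giving $\lfloor\hat dG\rfloor=\hat d\lfloor G\rfloor=d\lfloor G\rfloor$.

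The substantive case is $\lfloor G\rfloor=0$, when some extra unlabeled leaf $l$ is attached at a ``bad'' vertex $u$ --- either white or black of valence $\geq 4$. My plan is to show $\lfloor\hat dG\rfloor=0$ by pairing off the blow-ups of $G$ whose floor is nonzero. A blow-up of $G$ not supported at $u$ leaves $l$ in its bad position, so floors to $0$; and if $G$ has two or more bad extra leaves at distinct vertices, no single blow-up can rescue more than one of them, so every term in $\hat dG$ still floors to $0$. Assume then that $u$ is the unique bad vertex. For $\lfloor\tilde G\rfloor\neq 0$ one must blow $u$ up so that $l$ ends up on a new trivalent black vertex $v_2$ of $\tilde G$, which forces the black-side arc $S_2$ of $u$'s cyclic order to equal $\{l,x\}$ for a cyclic neighbour $x$ of $l$. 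Generically this produces exactly two such arcs and hence two blow-ups $\tilde G_1,\tilde G_2$; in the edge case where $u$ is white of valence $2$, the two arcs coincide as sets but yield two distinct blow-ups distinguished by the two possible cyclic orders on the three half-edges of $v_2$. Either way, $\lfloor\tilde G_1\rfloor$ and $\lfloor\tilde G_2\rfloor$ have the same underlying graph, namely $G$ with the leaf $l$ deleted from $u$.

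The main obstacle is precisely the orientation check that $\lfloor\tilde G_1\rfloor=-\lfloor\tilde G_2\rfloor$. The orientations $\omega_{\tilde G_1}$ and $\omega_{\tilde G_2}$ are equal as formal elements to $v_1\wedge v_2\wedge e_w\wedge e_b\wedge(\text{rest of }\omega_G)$ by the blow-up convention, and the two blow-ups differ only in the cyclic order at $v_2$. When one rewrites each $\omega_{\tilde G_i}$ in the form $v_2\wedge l\wedge h_1\wedge h_2\wedge\mathrm{rest}$ required by the removal convention, where $(l,h_1,h_2)$ is the cyclic order at $v_2$ starting from $l$, and then identifies the two resulting floors with $G\setminus l$ via renaming $e_w$ to the appropriate half-edge of $u$, a direct sign tally produces the required single sign flip. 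This gives $\lfloor\hat dG\rfloor=0=d\lfloor G\rfloor$, completing the identity.
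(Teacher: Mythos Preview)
Your argument is correct and follows essentially the same route as the paper's: trivalent black vertices admit no blow-ups (disposing of the case $\lfloor G\rfloor\neq 0$), and when $\lfloor G\rfloor=0$ the only blow-ups with nonzero floor are the two that separate the offending leaf together with one of its cyclic neighbours, and these cancel. One small point: in the vanishing case you split by the number of bad \emph{vertices} and then tacitly work with a single bad leaf $l$ at the unique bad vertex $u$, whereas the paper splits by the number of bad \emph{leaves}; to close the gap you should either observe that two or more bad leaves at the same vertex likewise cannot all be moved onto trivalent black vertices by a single blow-up, or simply adopt the paper's split.
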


\begin{proof}
If $G$ has unlabeled leaves at trivalent black vertices which are forgotten in $\lfloor G\rfloor$, since trivalent black vertices cannot be expanded,
$\hat d G$ will also have  such unlabeled leaves and it does not alter the differential if they are forgotten before or after we sum over all
blow-ups. The case $\lfloor G\rfloor \neq 0$ follows. 

Suppose now that $\lfloor G\rfloor= 0$. If $G$ has more than one unlabeled leaf at a high valent black vertex or non-start half-edge at a white
vertex, it is immediate that $\lfloor \hat d G\rfloor$ is also 0 as all terms in $\hat d G$ will also have unlabeled leaves of that type. 
If $G$ has a single unlabeled leaf at a high valent black or white vertex, then $\hat d G$ will have exactly two terms $G_l,G_r$ such that the unlabeled leaf is
at a black trivalent vertex, namely the blow-ups of that the vertex blowing out the leaf together with its left and its right neighbor
respectively. One has that $\lfloor G_l\rfloor =\lfloor G_r\rfloor$ with opposite orientations. 
\end{proof}

\begin{prop}
The map $d$ is a differential. 
\end{prop}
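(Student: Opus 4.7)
The plan is to reduce $d^2=0$ to the two facts already at our disposal: the identity $\hat d^2=0$ on $BW-{Graphs}$, proved in the display just above by the pairing $(\widehat G, f, e) \leftrightarrow (\widehat G, e, f)$ with opposite signs, and Lemma~\ref{floor}, which says precisely that $d$ commutes with $\lfloor\,\cdot\,\rfloor$ in the sense that $d\lfloor H\rfloor = \lfloor \hat d H\rfloor$ for an arbitrary black and white graph $H$. These together already carry all the combinatorial content of the statement, so only a short formal manipulation remains.

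Given these, for $G\in\oc{m}{p}-{Graphs}$ the calculation is essentially a one-liner:
$$d^2 G \;=\; d\bigl(\lfloor \hat d G\rfloor\bigr) \;=\; \lfloor \hat d(\hat d G)\rfloor \;=\; \lfloor \hat d^2 G\rfloor \;=\; 0,$$
where the middle equality applies Lemma~\ref{floor} $\Z$-linearly to each blow-up $\tilde G$ appearing as a summand of $\hat d G$, and the last equality uses $\hat d^2=0$ on $BW-{Graphs}$. Each such $\tilde G$ is a black and white graph with the same $p$ white vertices and $m$ labeled leaves as $G$ (possibly carrying extra unlabeled leaves created by edge blow-up), which lies squarely inside the generality in which Lemma~\ref{floor} is formulated. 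Consequently there is no serious obstacle beyond correctly invoking Lemma~\ref{floor}: its proof already dispatches the delicate point that $\lfloor\tilde G\rfloor$ may vanish or differ from $\tilde G$ for some of the intermediate summands, so no further case analysis is needed here.
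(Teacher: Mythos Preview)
Your proof is correct and follows exactly the same route as the paper: apply Lemma~\ref{floor} to $\hat d G$ and then use $\hat d^2=0$. The paper's proof is the one-line computation $d^2G=d\lfloor\hat dG\rfloor=\lfloor(\hat d)^2G\rfloor=0$, which is precisely your display.
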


\begin{proof}
This follows directly from the lemma using the fact that $\hat d$ is a differential: $d^2G=d \lfloor \hat d G\rfloor=\lfloor (\hat d\,)^2 G\rfloor=0$.
\end{proof}

\subsection{The open cobordism category $\OO$}\label{OO} 
For us, the open cobordism category is a dg-category with objects the natural numbers, thought of as representing disjoint unions of intervals, and morphism
given by chain complexes with homology that of the moduli spaces of Riemann cobordisms between the intervals. 
The composition is induced by the composition of cobordisms, i.e.~gluing along the intervals. Here, we follow the terminology of Moore-Segal \cite{moore-segal}, amongst others.

Fat graphs, without leaves, were invented to 
define a cell decomposition of Teichmuller space (see the work of Bowditch-Epstein \cite{BowEps}, Harer \cite{Har84},
Penner \cite{Pen87,Pen88}), and the chain complex $\oc{0}{0}-Graphs$ defined above 
is the corresponding cellular complex of the quotient of Teichmuller
space by the action of the mapping class group, namely the coarse moduli
space of Riemann surfaces. Similarly, fat graphs with leaves define a chain
complex for the moduli space of surfaces with fixed
boundaries, or with fixed intervals in their boundaries 
(see Penner \cite{Pen04,Pen06},
Godin \cite{God07}, Costello \cite[Sect.~6]{costello07} and \cite{Cos07}). 
As already remarked in \ref{bargraphs}, graphs with labeled leaves have no symmetries. 
The same holds for Riemann surfaces as soon as part of the boundary of the surface is assumed to have a fixed Riemann structure.  
It follows that the moduli space, being the quotient of Teichmuller space by a free action of the mapping class group of the surface, is a classifying space for that mapping
class group, and the chain complex of $\oc{m}{0}$--graphs of that surface type when $m>0$ computes the homology of the (now fine) moduli
space as well as the homology of the corresponding mapping class group. 

Let $S$ be a surface and $I$ a collection of intervals in its boundary. If we
denote by $\M(S,I)$ the moduli space of Riemann surfaces with a fixed
structure on an $\varepsilon$--neighborhood of $I$ (with the convention that $\M(S^1\x I,\emp)=*=\M(D^2,\emp)$, and the moduli space is the coarse moduli space for other surfaces with
no intervals in their boundary), we have the following:

\begin{thm}\label{OOmod}
There is an isomorphism 
$$H_*(\oc{m}{0}-Graphs)\ \ \cong\ \bigoplus_{(S,I)}
H_*(\M(S, I))$$
where $(S, I)$ ranges over all (possibly disconnected) oriented surfaces $S$ 
with $I$ a collection of $m$ labeled intervals in $\partial S$.  Here, each component of $S$ must  have nonempty boundary. 
\end{thm}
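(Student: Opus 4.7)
The plan is to decompose $\oc{m}{0}\text{--}Graphs$ as a direct sum of subcomplexes indexed by the topological type of the thickened graph, and then identify each summand with a cellular chain model of the corresponding moduli space coming from the Penner--Harer--Strebel arc decomposition. First I would note that every non-degenerate $\oc{m}{0}$--graph $G$ has a well-defined thickening to an oriented surface $S(G)$ together with $m$ labeled intervals $I(G)\subset \partial S(G)$ (one per labeled leaf), and every component of $S(G)$ has non-empty boundary by construction. Both edge-collapses and blow-ups are homotopy equivalences on the thickening and preserve $I(G)$, so the differential preserves $(S,I)$. This produces a direct sum decomposition
$$\oc{m}{0}\text{--}Graphs\ =\ \bigoplus_{(S,I)}\oc{m}{0}\text{--}Graphs_{S,I},$$
with the degenerate leaf and circle contributing the summands corresponding to $(D^2,\emp)$ and $(S^1\x I,\emp)$ respectively. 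Since $\oc{m}{0}$--graphs have no white vertices, the floor operation $\lfloor-\rfloor$ is trivial on them, so the differential $d$ here is simply the blow-up sum $\hat d$.

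Next, for each fixed type $(S,I)$, I would invoke the Penner--Strebel cell decomposition of the associated Teichmüller space (and of its quotient by the mapping class group $\Gamma(S,I)$ relative to $I$). In this decomposition the open cells are indexed by isomorphism classes of fat graphs of type $(S,I)$, cells of top dimension correspond to trivalent graphs, and degeneration of a cell to a lower-dimensional stratum corresponds exactly to collapsing an edge of the dual fat graph to produce a higher-valence vertex. Under the degree convention $\deg(G)=\sum_{v}(|v|-3)$, trivalent graphs lie in degree $0$ and the blow-up differential $\hat d$ reproduces the boundary maps in this cell decomposition (up to the standard reindexing by codimension). When $I\neq\emp$, the graphs have no non-trivial automorphisms, so the decomposition on the quotient yields an honest CW structure on the fine moduli space $\M(S,I)$ rather than an orbifold one; when $I=\emp$ we recover the coarse moduli space in the usual sense.

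Given the cellular identification, the isomorphism $H_*(\oc{m}{0}\text{--}Graphs_{S,I})\cong H_*(\M(S,I))$ is exactly the theorem of Penner \cite{Pen04,Pen06} (in the form generalized to include labeled intervals in the boundary, cf.\ Godin \cite{God07} and Costello \cite[Sect.~6]{costello07}). Summing over topological types $(S,I)$ then gives the claimed isomorphism. The degenerate summands $(D^2,\emp)$ and $(S^1\x I,\emp)$ are handled by their one-cell contributions (the leaf and the circle), consistent with the stipulated conventions $\M(D^2,\emp)=\M(S^1\x I,\emp)=*$.

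The main obstacle is the careful matching of signs: one must verify that the orientation on a cell of the Penner decomposition induced from $\det(\RR(V\sqcup H))$ transforms under edge collapse in the same way as the orientation conventions prescribed in Section~\ref{chaincpx}, so that $\hat d$ really is the cellular boundary rather than its negative on individual strata. Once this is settled, the rest of the argument is essentially a bookkeeping assembly of the cited cell-decomposition results, applied stratum by stratum.
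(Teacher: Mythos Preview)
Your proposal is correct and matches the paper's own treatment: the paper does not give an independent proof either, but simply points to the literature, specifically indicating that the result can be extracted from Costello \cite{Cos07} (the enumeration of cells after Proposition~2.2.3 there with $s=0$), with Penner \cite{Pen04,Pen06} and Godin \cite{God07} as parallel references. Two minor quibbles: the degenerate leaf corresponds to a disk with one (or two) labeled interval, not to $(D^2,\emp)$; and your phrase ``reindexing by codimension'' understates what is needed---the blow-up differential is not the cellular boundary of the Penner decomposition itself but rather that of its dual spine (equivalently, one is using Costello's model directly rather than the raw Penner cells), which is precisely what the cited references supply.
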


While the many references indicated above give similar such combinatorial models for moduli space, one may explicitly extract this result from
\cite{Cos07}, via the enumeration of the cells in Costello's cellular model for moduli space after Proposition 2.2.3 in \cite{Cos07} with $s=0$.  An alternative reference, with a different proof, is the restriction to the open part of 
\cite[Thm A]{Ega15} as $\oc{m}{0}-Graphs$ are the same as ``open fat graphs'' in the terminology of that paper. 
We can thus use $\oc{m}{0}$--graphs to provide a model for the open cobordism category, which we do now.

Let $\OO$ be the symmetric monoidal dg-category with objects the natural numbers (including 0) and morphisms 
from $m$ to $n$ the chain complex $$\OO(m,n):=\oc{m+n}{0}-Graphs$$ of fat graphs with $m+n$ labeled leaves. 
See Figure~\ref{OOA} for examples of morphisms in $\OO$. 

\begin{figure}[h]
\begin{lpic}{OOandAi.2(0.7,0.7)}
 \lbl[l]{66,0;(a)}
 \lbl[l]{162,0;(b)}
 \lbl[l]{45,30;\small{$4\ =1_{out}$}}
 \lbl[l]{45,23;\small{$7\ =4_{out}$}}
 \lbl[l]{45,17;\small{$5\ =2_{out}$}}
 \lbl[l]{45,11;\small{$6\ =3_{out}$}}
 \lbl[l]{45,5;\small{$8\ =5_{out}$}}
 \lbl[r]{-1,24;\small{$1$}}
 \lbl[r]{-1,34;\small{$2$}}
 \lbl[r]{-1,15;\small{$3$}}
 \lbl[r]{98,14;\small{$1$}}
 \lbl[r]{98,28;\small{$2$}}
 \lbl[r]{98,0;\small{$3$}}
 \lbl[r]{98,7;\small{$4$}}
 \lbl[r]{98,35;\small{$5$}}
 \lbl[r]{98,22;\small{$6$}}
 \lbl[l]{141,20;\small{$7\ =1_{out}$}}
 \lbl[l]{141,4;\small{$8\ =2_{out}$}}
\end{lpic}
\caption{Morphisms of $\OO(3,5)$ and $\Ai(6,2)$.}\label{OOA}
\end{figure}

Relabeling the $(m+j)$th leaf of a graph in $\OO(m,n)$ by $j_{out}$ as in the figure, 
composition $G_2\circ G_1$ is defined by gluing the leaf $j_{out}$ of $G_1$ with the  $j$th leaf of $G_2$, so that the two
leaves form an edge in the glued graph.  (More formally, we compose graphs by unioning vertices and half edges and altering the involution so that
the glued leaves are mapped to each other under the involution.) The orientation is obtained by juxtaposition (wedge product). 
The rule for gluing the exceptional graphs is as follows: \\
- Gluing a leaf labeled on one side has the effect of removing the corresponding leaf of
the other graph if this is a degree 0 operation (i.e. if the leaf was attached to a trivalent
vertex)---otherwise the gluing just gives 0.  If the trivalent vertex is $v$ with half edges $h_1,h_2,h_3$ attached to it in that cyclic order, and
the graph has orientation $v\w h_1\w h_2\w h_3\w x_1\w\dots\w x_k$, then the glued graph has orientation $x_1\w\dots\w x_k$.\\
- Gluing a doubly labeled leaf has the effect of relabeling the leaf
of the other graph if the labels of the leaf are incoming and outgoing. If both labels are incoming or outgoing, it
attaches the corresponding leaves of the other graph together so they form an edge. 

\smallskip

The fact that this gluing is compatible with the gluing of Riemann
surfaces along the intervals $I$ is \cite[Prop 6.1.5]{costello07}  (see also \cite[Thm 3.30]{Ega15} for a different proof). This can be understood
as follows: Fat graphs come from a cell decomposition of Teichm\"uller space, a fat graph in a surface defining a dual decomposition of the surface
into polygons. When the fat graph has leaves, the endpoints of the leaves should be placed in the boundary of the surface. Such graphs are dual to a polygonal decomposition of the surface with an interval around each leaf in the boundary of the surface
being always part of the decomposition. The gluing along leaves then corresponds in Teichm\"uller space to gluing such polygonal decompositions along such specified intervals, remembering the interval in the decomposition of the glued surface. 

\begin{rem}{\rm 
We note that this gluing along open boundaries does not agree with the one defined in \cite{Kau10,KauPen}, at least not under the most natural equivalence between the arc model used in those papers (restricting to the case of a single brane) and the dual fat graph model we use. Indeed, the gluing there is defined along intervals between marked points defined by the leaves, instead of around such points as we do.  
These boundary intervals between leaves correspond in the graph to sequences of edges between leaves, and the gluing is thus a gluing along sequences of edges. Despite the different starting point, we expect that their gluing, where it is defined, also models the gluing of Riemann surfaces along boundary intervals. We refer to Section~\ref{KP_section} for a further discussion of the Kaufmann-Penner model. 
}\end{rem}

\smallskip

The symmetric monoidal structure of $\OO$ is defined by taking disjoint union of graphs.
The identity morphisms and the symmetries in the category are given by (possibly empty) unions of doubly labeled leaves.

\subsection{The categories  $\Ai$ and $\Ai^+$}\label{Aisec} 
We let $\Ai$ denote the subcategory of directed forests in $\OO$, i.e.~$\Ai$ has the same objects as $\OO$, the natural numbers, 
and the chain complex $\Ai(m,n)$ of morphisms from
$m$ to $n$ is generated by graphs which are disjoint unions of $n$ trees with a total of $m_1+\dots+m_n=m$ incoming leaves, with each $m_i>0$,  
in addition to the root
of the tree which is labeled as an outgoing leaves. Here we allow the degenerate graphs consisting of single leaves labeled both sides (as one input
and one output), as well as the empty graph defining the identity morphism on $0$. 
We let $\Ai^+$ denote the slightly larger category where also the leaf labeled on one side
as an output is allowed. 
See Figure~\ref{OOA} (b) for an example of a morphism in $\Ai$. 

In \ref{Aialg}, we will relate these categories to $\Ai$-- and unital $\Ai$--algebras.

\subsection{The open-closed cobordism category $\OC$}\label{OCsec}
The open-closed cobordism category is a dg-category with objects pairs of natural numbers, thought of as representing disjoint unions of intervals  ({\em open
  boundaries}) and
circles ({\em closed boundaries}), and with morphisms defined as chain complexes on the moduli spaces of Riemann cobordisms between the collections of intervals and circles. Composition is again induced by composing cobordisms, i.e.~gluing surfaces. 
We give here a model of this category with morphisms sets given by chain complexes of black and white graphs. The white vertices will model outgoing closed boundary components while incoming closed boundaries will be modeled by cycles of edges in the graph starting at leaves (and determined by such leaves), and open boundaries will be modeled by leaves elsewhere in the graph. This asymmetric description of the morphisms is necessary for being able to define the composition---we need the boundary circles on one side to be disjointly embedded in some way, a property achieved by the white vertices which are by definition disjoint when distinct. But the particular choice of model made here really comes from studying the Hochschild complex of the open category $\OO$ (most particularly Lemma \ref{OOext}), and this is why the same model appears both in the work of  Costello and of Kontsevich-Soibelman in \cite{costello07,KS06}.

\smallskip

Let $\OC$ denote the dg-category with objects pairs of natural numbers $\oc{m}{n}$, for $m,n\ge 0$ representing $m$ intervals and $n$ circles, 
and with morphisms 
$$\OC(\oc{m_1}{n_1},\oc{m_2}{n_2})\ \subset\ \oc{n_1+m_1+m_2}{n_2}-Graphs$$ 
the subcomplex of $\oc{n_1+m_1+m_2}{n_2}$--graphs with the
first $n_1$ leaves sole labeled leaves in their boundary cycle, representing cobordisms from $m_1$ intervals and $n_1$ circles to
$m_2$ intervals and $n_2$ circles. Theorem \ref{cos_thm} below says that the chain complex $\OC(\oc{m_1}{n_1},\oc{m_2}{n_2})$ does indeed compute the homology of the
moduli space of Riemann structures on such cobordisms. Note that these moduli spaces are classifying spaces for the mapping class groups of the corresponding surfaces fixing marked circles and intervals in their boundary. Now the mapping class group of a surface fixing an interval in some boundary component (or several intervals) is isomorphic to the mapping class group fixing the whole boundary. This is a way of understanding how the moduli of surfaces with a fixed boundary circle can be given by the graph complex for a surface with a single leaf in its boundary component, given that we already know from the open cobordism category that leaves model fixed intervals.

One can think of the composition of two graphs on closed boundaries as being induced by ``gluing'' the outgoing circles represented by white vertices of the one graph along the cycles in the other graph representing incoming circles.  In many respects, this resembles the composition law in the cactus operad. In practice, this means that for each white vertex, we will attach the half-edges at that white vertex in all possible ways (giving the right degree) along the corresponding cycle of the other graph, with the start half-edge at the ``start leaf'' of that cycle (i.e.~the leaf defining it), and respecting the cyclic ordering.  Formally, 
given graphs $G_1\in\OC(\oc{m_1}{n_1},\oc{m_2}{n_2})$ and $G_2\in\OC(\oc{m_2}{n_2},\oc{m_3}{n_3})$, their 
composition  is defined as the sum 
$G_2\circ G_1=\sum \lfloor G\rfloor$ over
all possible black and white graphs $G$ that can be obtained from $G_1$ and $G_2$ by: 
\begin{enumerate}
\item removing the $n_2$ white vertices of $G_1$,
\item identifying the start half-edge of the $i$th white vertex $v_i$ of $G_1$ with the $i$th leaf $\la_i$ of $G_2$,
\item attaching the remaining leaves in $s^{-1}(v_i)$ to vertices of the boundary cycle of $G_2$ containing $\la_i$, respecting the
  cyclic ordering of the leaves,
\item attaching the last $m_2$ labeled leaves of $G_1$ to the leaves of $G_2$ labeled $n_2+1,\dots,n_2+m_2$, respecting the order, 
\end{enumerate}
where $\lfloor G\rfloor$ is defined as in Section~\ref{chaincpx}. 
(Unlabeled leaves are produced during the gluing operation in the following situation: if the $i$th white vertex of $G_1$ has an unlabeled start
half-edge, the $i$th leaf $\la_i$ of $G_2$ becomes unlabeled in the glued graph.) 

The orientation of $G_2\circ G_1$ is obtained by juxtaposition after removing the white vertices $v_i$ and their start half-edges $h_i$ from the
orientation of $G_1$ ordered as pairs $v_i\w h_i$, and then removing quadruples $v\w l\w h_1\w h_2$ as in \ref{chaincpx} for each forgotten unlabeled leaf. 
Figure~\ref{gluing} give two examples of compositions in $\OC$.
\begin{figure}[h]
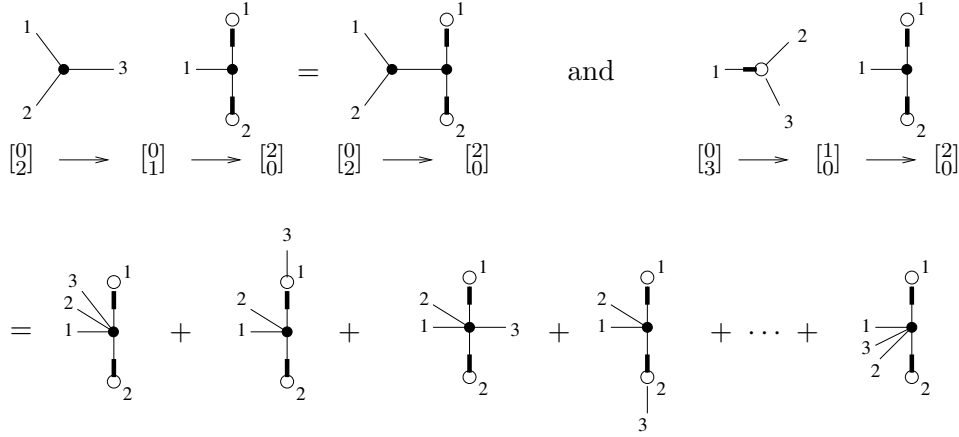

\begin{lpic}{gluing(0.6,0.6)}
 \lbl[b]{0,57;$\oc{2}{0}$}
 \lbl[b]{29,57;$\oc{1}{0}$}
 \lbl[b]{55,57;$\oc{0}{2}$}
 \lbl[b]{72,57;$\oc{2}{0}$}
 \lbl[b]{63,78;$=$}
 \lbl[b]{100,57;$\oc{0}{2}$}
 \lbl[b]{151,57;$\oc{3}{0}$}
 \lbl[b]{177,57;$\oc{0}{1}$}
 \lbl[b]{203,57;$\oc{0}{2}$}
 \lbl[b]{125,78;and}
 \lbl[b]{0,21;$=$}
 \lbl[b]{35,20;$+$}
 \lbl[b]{72,20;$+$}
 \lbl[b]{118,20;$+$}
 \lbl[b]{163,20;$+\ \cdots\ +$}
\end{lpic}
\caption{Compositions in $\OC$. (Note that we interpret the same graph in two different ways in the first and second composition.)}\label{gluing}
\end{figure}

\begin{lem}
The composition of graphs defined above is a chain map
$$\OC(\oc{m_1}{n_1},\oc{m_2}{n_2})\ot\OC(\oc{m_2}{n_2},\oc{m_3}{n_3})\rar \OC(\oc{m_1}{n_1},\oc{m_3}{n_3}).$$
Moreover it is associative. 
\end{lem}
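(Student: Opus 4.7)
By Lemma~\ref{floor}, the floor operation $\lfloor\cdot\rfloor$ commutes with $\hat d$, so it suffices to prove the graded Leibniz identity for the raw \emph{pregluing} $G_1,G_2 \mapsto \sum G$ of black-and-white graphs (the sum in the definition of $G_2\circ G_1$ \emph{before} the operation $\lfloor\cdot\rfloor$ is applied) against the unfloored differential $\hat d$, and then to apply $\lfloor\cdot\rfloor$ to both sides. Both the differential on $\OC$ and the composition are defined as $\lfloor\cdot\rfloor$ of their raw counterparts, so the identity descends.

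The strategy is to enumerate the blow-ups $(\tilde G,e)$ appearing in $\hat d G$ for each summand $G$ of the pregluing, classifying them by where the new edge $e$ sits: (a) $e$ is an edge of the $G_1$-part of $G$, not incident to any removed white vertex $v_i$; (b) $e$ is an edge of the $G_2$-part of $G$, not incident to any vertex $w$ of $G_2$ at which some half-edge of a $v_i$ was attached; (c) $e$ sits at the gluing seam---either it is one of the newly formed edges, or it is incident to a vertex $w$ of $G_2$ carrying attached half-edges. Type (a) blow-ups are in bijection with pairs (blow-up of $G_1$ away from the $v_i$, unchanged attachment data), and assemble, with the Koszul sign coming from reordering orientation data, into the pregluing of $G_2$ with $\hat d G_1$. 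Type (b) blow-ups assemble into the pregluing of $\hat d G_2$ with $G_1$, with the crucial observation that a blow-up of $G_2$ at a vertex $w$ on a relevant boundary cycle produces a new adjacent vertex $w'$ on that same boundary cycle, which serves as an additional legitimate attachment site when one reglues $G_1$. These new attachment positions appearing on the $\hat d G_2$ side match exactly the type (c) blow-ups of the pregluing which separate a cyclically consecutive block of \emph{purely} attached half-edges from $w$ onto a new vertex.

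The remaining type (c) blow-ups are those separating a cyclically consecutive subset $S \subseteq s^{-1}(w)$ that properly mixes original half-edges of $w$ and newly attached ones. I claim that these cancel in pairs: shifting the endpoint of $S$ by one past the first (or last) attached half-edge produces the same resulting black-and-white graph, but realized via a different attachment configuration of the pregluing, and the two induced orientations are opposite. Summed over all summands of the pregluing these contributions cancel, leaving only the terms of types (a) and (b), which assemble into the desired Leibniz expression. Applying $\lfloor\cdot\rfloor$ then yields the chain map property in $\OC$.

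The hard part will be the sign bookkeeping in case (c). The cyclic order at an attachment vertex $w$ is altered nontrivially by inserting the attached half-edges between the two boundary-cycle half-edges of $w$, and the induced orientation of a blow-up at $w$ depends on this insertion. When the start half-edge of a removed $v_i$ is itself involved in a blow-up, an additional choice (its placement among the half-edges of a newly created black vertex, as in the definition of $G/e$) must be tracked. A careful wedge-product computation, using the sign conventions prescribed in \S\ref{chaincpx} for edge collapses and for $\lfloor\cdot\rfloor$, will confirm the predicted signs for both the matching of types (b) and (c) and the pairwise cancellation inside (c); the combinatorics is parallel to that of the analogous gluing verifications in~\cite{God07,costello07}, with the additional subtlety of the start half-edges coming from our black-and-white framework.
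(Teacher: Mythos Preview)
Your overall architecture---reduce via Lemma~\ref{floor} to a Leibniz identity for the raw pregluing $\hat\circ$ against $\hat d$, then classify blow-ups of each summand by where the new edge sits---is exactly the paper's approach. But the specific case analysis is miswired in a way that breaks the bijection.

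The type (c) blow-ups that separate a \emph{purely attached} block (new black vertex carrying only $G_1$ half-edges plus the new half-edge) do \emph{not} come from the $\hat dG_2$ side. They come from $G_2\hat\circ\hat dG_1$: they are precisely the terms of $\hat dG_1$ arising from blowing up a white vertex $v_i$ of $G_1$, glued afterwards to $G_2$. Your type (a) only accounts for blow-ups of $G_1$ at its black vertices, so these white-vertex blow-ups are missing from your $G_2\hat\circ\hat dG_1$ tally. Conversely, blowing up $G_2$ at an attachment vertex $w$ and then regluing produces graphs in which the newly created vertex carries at least \emph{two} original half-edges of $G_2$ (it was already at least trivalent in $\hat dG_2$); these are the type (c) terms with $\ge 2$ $G_2$ half-edges at the new vertex, not the purely attached ones. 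The terms that genuinely cancel in pairs are only those where the new black vertex has \emph{exactly one} half-edge of $G_2$; the correct involution swaps the role of that single $G_2$ half-edge with the newly created half-edge (giving the same graph read as a blow-up of a different pregluing summand), not a shift of an endpoint past an attached half-edge---your shift changes the valence of the new vertex and so cannot return the same graph.

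One further point: once the raw Leibniz identity is in hand, the passage through $\lfloor\cdot\rfloor$ is not quite formal. One must check $\lfloor \hat dG_2\hat\circ G_1\rfloor = dG_2\circ G_1$, and there is a subtlety: an unlabeled leaf of $\hat dG_2$ sitting at a trivalent black vertex on an incoming cycle can, after attaching $G_1$ half-edges at that vertex, end up at a vertex of valence $\ge 4$. Such terms vanish under $\lfloor\cdot\rfloor$ on the left and are absent on the right (the vertex is forgotten in $dG_2$), so they do match, but this needs to be said.
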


\begin{proof}
Given $G_1,G_2$ as above, we need to check that $$d(G_2\circ G_1)=G_2\circ dG_1 + (-1)^{|G_1|} dG_2\circ G_1.$$
Recall from \ref{chaincpx} that $dG=\lfloor \hat dG\rfloor$, for $\hat d$ the differential in black and white graphs. 
We have similarly $G_2\circ G_1=\lfloor G_2\hat\circ G_1\rfloor$ where 
$G_2\hat\circ G_1$ denotes the composition of graphs as black and white graphs, without taking the underlying $\oc{m}{p}$--graphs. 

We first check that $\hat d(G_2\hat \circ G_1)=G_2\hat\circ \hat d G_1 + (-1)^{|G_1|} \hat d G_2\hat \circ G_1.$
Call a vertex of $G_2\hat\circ G_1$ {\em special} if it comes from a vertex of one of the first $n_2$ boundary cycles of $G_2$.  
The left-hand side has terms coming from  
\begin{enumerate}
\item blowing up at a non-special vertex,
\item blowing up at a special vertex in such a way that the newly created vertices are either 
\begin{tabular}[t]{l}
white, \\
black with no half-edges of $G_2$, or\\
black with at least two half-edges of $G_2$,
\end{tabular}
\item blowing up at a special vertex in such a way that one of the newly created vertices is black with exactly one half-edge of $G_2$
  attached to it. 
\end{enumerate} 
The terms of type (1) and (2) are exactly the terms occurring in $G_2\hat\circ \hat dG_1 + (-1)^{|G_1|} \hat dG_2\hat \circ G_1$ as black and white graphs, i.e.~before
taking the underlying $\oc{m}{p}$--graphs $\lfloor G\rfloor$. 
Indeed, type (1) terms correspond to blowing up at vertices of $G_1$ or $G_2$ which are not affected by the gluing, and type (2) terms
correspond either to blowing up a vertex of $G_2$ on a incoming cycle and then attach edges of $G_1$, or, in the case where one of the
vertices is black with no half-edges of $G_2$ attached to it, this correspond to blowing-up at a white vertex of $G_1$ and then glue
the resulting graph to $G_2$. This covers all the possibilities. 

The fact that the signs agree follows from the fact that the parity of the degree of a
graph is the same as the parity of the number of vertices and half-edges in the graph, i.e. that 
$(-1)^{|G_1|}=(-1)^{|V_1|+|H_1|}=(-1)^{|V_1|+|H_1|-2|(V_{1})_{w}|}$ for $(V_{1})_{w}$ the set of white vertices of $G_1$. 
Indeed, a vertex contributes with an odd degree
precisely when it has even valence, that is when the vertex plus its half-edges give an odd number. 

We are left to check that the terms of type (3) cancel in pairs.  
A ``bad'' newly created vertex has exactly two half-edges attached to it which are not from $G_1$: one from $G_2$ and one newly created
half-edge. Any
such graph occurs a second time as a term of type (3) with the role of these two edges exchanged and one checks that the signs cancel.

\smallskip 

Now $d(G_2\circ G_1)=d\lfloor G_2\hat \circ G_1\rfloor= \lfloor \hat d(G_2\hat \circ G_1)\rfloor$ by Lemma~\ref{floor}. 
Using the above calculation, we thus get 
 $d(G_2\circ G_1)=\lfloor G_2\hat\circ \hat d G_1\rfloor + (-1)^{|G_1|} \lfloor \hat d G_2\hat \circ G_1 \rfloor$. 
Now $\lfloor G_2\hat\circ \hat d G_1\rfloor  =G_2\circ dG_1$ as unlabeled leaves attached to trivalent black vertices of $G_1$ will still be attached
at trivalent black vertices in the composition, and those attached to higher valent black vertices or to white vertices will still be attached to
such.  We are left to check that 
$ \lfloor \hat d G_2\hat \circ G_1 \rfloor= dG_2\circ G_1$. In this case if $G_2$ has an unlabeled leaf at a trivalent black vertex of an incoming
cycle, there will be terms in $\hat d G_2\hat \circ G_1$ with this leaf is attached to a higher valent black vertex, namely the terms where leaves of
$G_1$ are attached at that vertex. These terms vanish in $ \lfloor \hat d G_2\hat \circ G_1 \rfloor$ and are not present in $dG_2\circ G_1$ as the
vertex is forgotten in $dG_2$. 

\medskip

We check associativity. Suppose $G_1,G_2,G_3$ are three composable graphs and consider the compositions $G_3\circ (G_2\circ G_1)$ and $(G_3\circ
G_2)\circ G_1$. The identifications of leaves representing open boundaries will be the same in both cases. For closed boundaries, one checks that each
term  in the first composition corresponds exactly to a term in the second composition and vice versa: The identification of start-leaves is fixed,
and the same in both cases. If we first remove the white vertices of $G_1$,
some leaves of $G_1$ might be attached to white vertices of $G_2$. When those white vertices are removed in the further composition with $G_3$, the
leaves of $G_1$ that were attached to a white vertex of $G_2$ will be attached in all possible ways, respecting their position in between leaves of
$G_2$, to the corresponding boundary circle in $G_3$. If we start by composing $G_2$ and $G_3$, the incoming boundary cycles of $G_2$ with white
vertices will become incoming boundary cycles of $G_3\circ G_2$ partially in the old $G_3$. Attaching now $G_1$ along such a boundary cycle, we see
exactly all the terms that occured before. Indeed, the leaves of $G_1$ will either be attached only to black vertices of the old $G_2$, or some of
them might be attached to vertices of $G_3$, in all possible ways, in between old edges of $G_2$ that where previously attached to a white
vertex. Left is to check that taking the underlying black-and-white graph gives the same result in both cases: If $G$ in the composition $G_2\circ
G_1$ satisfies $\lfloor G\rfloor=0$, then a start-leaf of $G_1$ was attached to a higher valence black vertex of $G_2$ or a white vertex of $G_2$. Any
graph $G'$ obtained from $G$ by further attaching $G_3$ will then have that start-leaf attached to a higher valence black vertex or a white vertex 
of the composed
graph, and hence also give 0.  On the other hand, if $G'$ in the composition $G_3\circ
G_2$ satisfies $\lfloor G'\rfloor=0$, then a start-leaf of $G_2$ was attached to a higher valence black vertex of $G_3$ or a white vertex of $G_3$,
and it will remain attached to such a vertex after any further gluing of $G_1$.
\end{proof}

Finally, we verify that the morphism complexes in $\OC$ do indeed compute the homology of the moduli space of open-closed Riemann cobordisms.

\begin{thm}\label{cos_thm}
$\OC(\oc{m_1}{n_1},\oc{m_2}{n_2})$  is the cellular complex of a space weakly homotopy equivalent to the disjoint union of coarse moduli
spaces\footnote{Here we employ again the convention that the moduli space of a disk with a single free boundary is a point, as is the moduli of an annulus with two free boundaries.} of Riemann surfaces of every genus, with

\begin{itemize}

\item $m_1+m_2$ labeled open boundary components,
\item $n_1+n_2$ labeled closed boundary components, 
\item any number of free boundary components (at least one per component with no open or incoming closed boundary)

\end{itemize}

Moreover, the composition of graphs defined above is compatible under this equivalence with the maps of moduli spaces induced by gluing surfaces along open and closed boundary components. 
\end{thm}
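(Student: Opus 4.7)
The plan is to reduce to Theorem \ref{OOmod} by exhibiting a concrete cellular model of the open-closed moduli space whose cells biject with isomorphism classes of graphs in $\OC(\oc{m_1}{n_1},\oc{m_2}{n_2})$, and whose cellular differential matches the graph differential $d=\lfloor\hat d\rfloor$ defined in Section \ref{chaincpx}.

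First I would interpret each oriented $\oc{n_1+m_1+m_2}{n_2}$-graph geometrically via thickening: a fat graph thickens to an oriented surface with boundary, and the boundary cycles come in four flavors determined by the graph's decoration. A cycle containing one of the $n_2$ white vertices corresponds to an outgoing closed boundary (with the start half-edge marking a point on the circle); a cycle containing a single labeled leaf from the first $n_1$ and no other label corresponds to an incoming closed boundary; a cycle containing any of the remaining $m_1+m_2$ labeled leaves is an open boundary (with the leaves recording the cyclic order of its intervals); and any remaining cycle becomes a free boundary. The convention that the first $n_1$ leaves are alone in their boundary cycle is precisely what permits the second interpretation, and the trivalence condition on black vertices plays the role of the standard nondegeneracy for the dual arc system.

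Next I would invoke the Penner-Bowditch-Epstein-Harer arc decomposition of moduli space in the open-closed setting, as extended by Costello to accommodate closed boundaries \cite[Sec.~6]{costello07}; this is the same decomposition underlying Theorem \ref{OOmod}. In it, top-dimensional cells are indexed by admissible arc systems on a topological surface with the prescribed boundary structure, whose dual is a black and white graph as above. Lower-dimensional faces arise by collapsing arcs, which dually corresponds to blowing up vertices: collapsing an arc between two different free or open regions increases the valence of a black vertex; collapsing an arc incident to an incoming closed puncture corresponds to blowing up at a white vertex; and a codimension-one face coming from pushing the closed marking through an adjacent trivalent vertex corresponds exactly to the degenerate blow-ups killed by $\lfloor-\rfloor$. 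The degree formula $\deg(G)=\sum(|v|-3)+\sum(|v|-1)$ then matches the codimension in this cell decomposition, and the orientation conventions of Section \ref{chaincpx} were set up for exactly this purpose.

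The main obstacle is the last step: checking in detail that every face relation in Costello's decomposition arises from exactly one blow-up (or cancelling pair of blow-ups, in the exceptional cases handled by Lemma \ref{floor}), with the correct sign. Since the underlying geometric statement is essentially \cite[Prop.~6.1.3]{costello07}, the real content of a proof is the translation between Costello's indexing conventions and ours, together with the verification that the special handling of start half-edges, unlabeled leaves, and trivalent black vertices in the definition of $\oc{m}{p}$-graphs matches the corresponding collapses and expansions of arcs near closed punctures. I would proceed by induction on the total number of closed boundaries $n_1+n_2$, using Theorem \ref{OOmod} as the base case $n_1=n_2=0$ and treating the addition of an incoming or outgoing closed boundary as an explicit local modification of both the cell complex and the graph complex whose compatibility can be checked one boundary at a time.
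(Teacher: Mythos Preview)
Your outline is sound in spirit---the result really is Costello's, and the content is exactly the translation you describe---but your reduction strategy is more elaborate than what the paper does, and the hard step is hidden in your induction.

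The paper does not give an independent proof. It simply cites \cite[Prop.~6.1.3]{costello07} for the case $n_1=0$, explaining the dictionary (Costello's discs $\leftrightarrow$ black vertices, annuli with marked points $\leftrightarrow$ white vertices with start half-edges), and then disposes of $n_1>0$ in one line: an incoming closed boundary is combinatorially a leaf alone in its boundary cycle, and the moduli space of surfaces with $n_1$ such isolated open boundaries is homotopy equivalent to the moduli space with $n_1$ closed boundaries. So the reduction is $n_1>0 \Rightarrow n_1=0$ directly, not by induction.

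By contrast, your induction starts at $n_1=n_2=0$ (Theorem~\ref{OOmod}) and must therefore absorb the passage $n_2 \to n_2+1$ as an inductive step. That step---adding a white vertex and checking that the resulting graph complex is still a cellular model for moduli with one more parametrized outgoing closed boundary---is not a local modification in any obvious sense; it is precisely the substance of Costello's argument (or equivalently of the admissible fat graph model in \cite{godin07}, or of Egas's thesis \cite{Danielathesis}). Packaging it as ``one boundary at a time'' does not make it easier. If you want to follow the paper, take Costello's $n_1=0$ statement as given and reduce to it; if you want a self-contained argument, you should expect the $n_2$ step to carry essentially all the weight, and it deserves more than a sentence.
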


In particular, this theorem identifies the components of $\OC$ with the topological types of open-closed cobordisms.  

The equivalence to moduli space in the case $n_1=0$ was proved by Costello \cite[Prop.~6.1.3]{costello07} using certain flow on moduli space. 
Costello denotes this chain complex $\mathcal{G}$ in \cite{costello07} and describes it in terms of discs (corresponding here to black vertices) and annuli with marked
points (corresponding here to white vertices with start half-edges). The description in terms of graphs can be found in \cite{Cos07} for the
category $\OO$ after Proposition 2.2.3, though in \cite{Cos07} the white vertices are used to model punctures and do not have start half-edges. 

We extract this result instead from the work of Egas \cite{Ega15}, who proves the complete statement using, instead of Costello's flow,  a direct relationship between fat graphs and black and white graphs. 

\begin{proof}
Theorem  B of \cite{Ega15} says that the chain complex of black and white graphs has homology  $\coprod_S H_*(B\operatorname{Mod}(S))$, where $S$ runs over all open-closed cobordisms $S$ with at least one boundary which is not outgoing closed, and $\operatorname{Mod}(S)$ denotes the mapping class group of $S$. As $\operatorname{Mod}(S)$ has the homotopy type of the coarse moduli space, by the contractibility of Teichm{\"u}ller space, this yields the equivalence claimed. Now  \cite[Thm 4.41]{Ega15} shows that the composition of graphs induces the composition of moduli space induced by gluing surfaces.  
\end{proof}

Note also that Theorem 3.1 of \cite{Wah12} shows that $\OC$ identifies as a quasi-isomorphic 
subcategory of the category of formal operations on the Hochschild complex of $\OO$--algebras,
where a formal operation is defined as a natural transformation of the iterated generalized Hochschild complex functor, and composition is
composition of natural transformations. This shows that the gluing defined here also identifies with the composition of the universal formal operations on the Hochschild
complex of $\OO$--algebras.

\subsection{Annuli}\label{annulisec}

We introduce in this section a chain complex of annuli that will play an important role in our definition of the Hochschild complex: tensoring with this chain complex will be used to give the degree shift in the Hochschild complex and define the Hochschild differential. This will facilitates both keeping track of the signs, and making sure the actions we define are given by chain maps. 

This chain complex of annuli  is closely related to the annular part of the category $\mathcal{D}$ described in \cite[Sec 6]{costello07}.  Further, it resembles a chain complex used in \cite[Sec 11.2]{KS06} to describe the action of the Hochschild cochains of an algebra on its Hochschild chains. We will however {\em not} consider the Hochschild cochains in this paper, and the complex of annuli used here should rather be thought of as simply modeling the map 
$A^{\ot m}\to C_*(A,A)=\bigoplus_{n\ge 1}A^{\ot n}$. 

\medskip

Let $\OC_A(\oc{m}{0},\oc{0}{1})\subset \OC(\oc{m}{0},\oc{0}{1})$ 
denote the component of the annuli with $m$ open incoming boundaries on one side, and one closed outgoing boundary on the
other side. Each generating graph in this chain complex is build from a white vertex (the outgoing circle) by attaching trees, with possibly one unlabeled leaf as start
half-edge for the white vertex. 
Inside this chain complex, we can consider the sub-chain complex $\mathcal{L}(m)$ of graphs with no unlabeled leaf. 
\begin{figure}[h]
\begin{lpic}{Ann(0.7,0.7)}
\end{lpic}
\caption{Annulus}\label{Ann}
\end{figure}
Figure~\ref{Ann} shows an example of an graph in $\mathcal{L}(12)$. 
Let $L_n=\lgl l_n \rgl$ denote the free graded $\Z$-module on a single generator  in degree $n-1$, the graph $l_n$ of Figure~\ref{graphsex}(c). 
By cutting the graphs around their white vertex, the complex $\mathcal{L}(m)$  can be described as
$$\mathcal{L}(m)=\bigoplus_{n\ge 1}\Ai(m,n)\ot L_n$$
with differential $d=d_{\Ai}+d_L$, where $d_{\Ai}$ is the differential of $\Ai$ and 
$$d_L:\Ai(m,n)\ot L_{n}\rar \Ai(m,n)\ot \bigoplus_{1\le k< n} \Ai(n,k)\ot L_{k}\rar \bigoplus_{1\le k< n} \Ai(m,k)\ot L_{k},$$
where the first map takes the differential of $l_n$ in $\OC(\oc{m}{0},\oc{0}{1})$ and reads off the blown-up graphs as elements of $\Ai(n,k)\ot L_{k}$
for various $k<n$, and the second map is composition in $\Ai$. 

We will use the notation $$d_L(l_n)=\sum_{k<n}f_{n,k}\ot l_k\in \bigoplus_{1\le k< n} \Ai(n,k)\ot L_{k}$$ 
for this decomposition of the differential of $l_n$ in $\OC$.

\subsection{The category of Sullivan diagrams $\SD$}\label{SDsec}
Sullivan chord diagrams are usually defined as fat graphs built from a disjoint union of circles by attaching chords, or trees, which should be
thought of as ``length 0'' edges, in such a way that the original circles are still cycles in the resulting graph. One has to be aware that authors
sometimes restrict to {\em non-degenerate diagrams}, those such that collapsing the chords does not change the homotopy type of the
graph (as in for example \cite{Cha05,CohGod,KaufI}). There are also marked and unmarked versions, and there can be variations in the way the markings
are handled (as in e.g.~\cite{PoiRou}). We consider
here a chain complex of general Sullivan diagrams, also degenerate ones. Our definition is in the spirit of \cite{ChaSul04} and
 agrees with that of \cite{Tradler-Z} as well as the normalized ``closed'' Sullivan diagrams of \cite{Kau10}. 
Such Sullivan diagrams  model a harmonic compactification of moduli space (see \cite{EgaKup}).  

We start the section by giving a formal definition of Sullivan diagrams, relate it to the informal definition above, and build a chain complex
of such diagrams. Then we will show that
Sullivan diagram can be identified with a quotient complex of black and white graphs, which is the way they occur in the present paper. 
This will enable us to define an ``open-closed'' category $\SD$ of
Sullivan diagrams directly as a quotient of the category $\OC$. 
We will then prove a few facts about the map quotient $\OC\to \SD$, as well as 
explain how our category of Sullivan diagrams relates to the one defined in \cite{Kau10}. 

\medskip

We call a fat graph {\em $p$--admissible} (in the spirit of \cite{godin07}) if $p$ of its boundary cycles are disjoint embedded circles
in the graph. We call these $p$ special cycles {\em admissible cycles} and represent them as round circles when drawing such a graph.  
\begin{Def}\label{SDdef}
An {\em (oriented) $\oc{m}{p}$--Sullivan diagram} is an equivalence class of (oriented) 
$p$--admissible fat graphs with $p+m$ leaves, where the first $p$
leaves are distributed as 1 per
admissible boundary cycle and the remaining $m$ leaves lie in the other cycles. Two such graphs $G_1,G_2$ are equivalent if they are
connected by a zig-zag of edge collapses between $p$--admissible fat graphs, collapsing edges which are not in the $p$ admissible cycles, for edge collapses as defined in \ref{col-blow}. 
(Figure \ref{SDc} shows four equivalent $\oc{2}{1}$--Sullivan diagrams.)
\end{Def}
\begin{figure}[h]
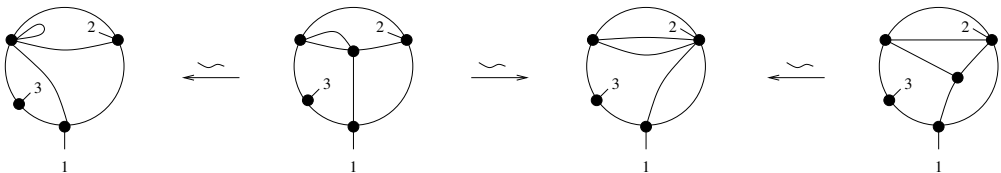

\begin{lpic}{SDclassic2(0.5,0.5)}
\end{lpic}
\caption{Equivalent Sullivan diagrams, with one admissible cycle which in each case is the outside of the round circle}\label{SDc}
\end{figure}

To a fat graph, one can associate a surface by thickening the graph. As edge collapses respect the topological type of the associated surface, a
Sullivan diagram still has an associated topological type. 

\smallskip

For a  $\oc{m}{p}$--Sullivan diagram $G$, we let $E_a$ denote the set of edges lying on the admissible cycles of $G$. 
The {\em degree} of $G$ is then defined as 
$$\deg(G)=|E_a|-p.$$ 
For example, the Sullivan diagrams of Figure~\ref{SDc} are of degree $4-1=3$, while the left picture in Figure~\ref{SDequiv} is a
Sullivan diagram of degree $6-2=4$. 

Let $\oc{m}{p}-SD$ denote the chain complex generated as a graded $\Z$--module by 
all oriented $\oc{m}{p}$--Sullivan diagrams, modulo the relation that $-1$ acts by reversing the orientation. The 
boundary map in $\oc{m}{p}-SD$ is defined on generators by 
$$dG=\sum_{e\in E_a} G/e,$$ 
the sum of all collapses of $G$ along edges in the admissible boundary cycles\footnote{It is worth noting that we have already effectively collapsed
  the remaining edges by the equivalence relation.} of $G$, with $G/e$ defined in \ref{col-blow}.

This chain complex is isomorphic to the complex {\em Cyclic Sullivan Chord
  Diagrams} considered by Tradler-Zeinalian in \cite[Def 2.1]{Tradler-Z}. 
Their diagrams are build from disjoint circles (our admissible cycles) by attaching trees (the {\em chords} or non-admissible edges), 
whereas we allow the chords to be unions
of graphs.  However, collapsing non-admissible edges, one can alway push the vertices of the representing graph to only lie on the admissible cycles. Hence a
Sullivan diagram in our sense is always equivalent to one as in \cite{Tradler-Z} which is a union of admissible cycles together with chords which
are edges attached directly to the cycles.
The equivalence relation in \cite{Tradler-Z} corresponds this way to the one defined here. 

\begin{rem}\label{cactirem}{\rm
The chain complex of $\oc{1}{p}$--Sullivan diagrams of topological
type a surface of genus 0 with $p+1$ boundary components is a cellular
complex for the $p$th space of the normalized cactus operad
\cite{Vor05,Kau05}. Indeed, such a Sullivan diagram is made out of $p$
circles attached to each other in a  tree-like fashion, exactly
representing a cell in the normalized cactus operad. (See Figure \ref{cacti}.) This statement can also be found in \cite[Sec 3.1.1]{KauSch10} or \cite{Kau07} in the ``spineless'' version (corresponding to not having start half-edges at the white vertices in our language), and \cite{War12} with the spines, both in terms of {\em bipartite black and white trees}, which give a slightly different description of the same chain complex. (See also \cite{Kau05}.) 
\begin{figure}[h]
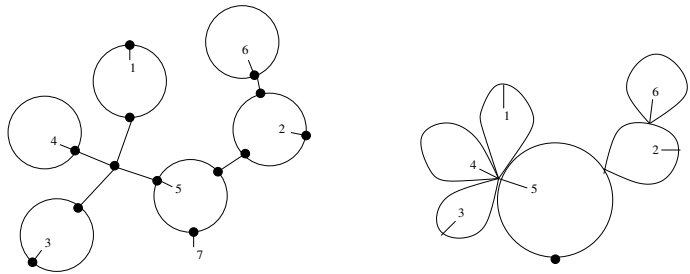

\begin{lpic}{cacti(0.4,0.4)}
\end{lpic}
\caption{A Sullivan diagram with 6 admissible cycles modelling a 6-lobed cactus.}\label{cacti}
\end{figure}
}\end{rem}

The following theorem relates Sullivan diagrams to black and white graphs. In the proof, 
we will use the equivalence relation in Sullivan diagrams in the opposite way from what we used to relate our definition to that of \cite{Tradler-Z}: we will represent
Sullivan diagrams by the graphs
in their equivalence class with the maximum number of vertices {\em not} on the admissible cycles.

\begin{thm}\label{SDequivthm}
The chain complex $\oc{m}{p}-SD$ of $\oc{m}{p}$--Sullivan diagrams is the quotient of  $\oc{m}{p}-Graphs$ by the graphs with black vertices of valence
at least 4 and by the boundaries of such graphs.
\end{thm}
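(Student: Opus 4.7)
The plan is to construct a surjective chain map $\phi\colon \oc{m}{p}-Graphs \to \oc{m}{p}-SD$ whose kernel is exactly $N+dN$, where $N$ is the graded subspace spanned by graphs with at least one black vertex of valence $\geq 4$. Although $N$ is not itself closed under $d$ (blowing up a $4$-valent black vertex yields trivalent graphs), one has $d(N+dN)=dN\subseteq N+dN$, so $N+dN$ is a subcomplex and the quotient is well defined. I will define $\phi$ on generators by an \emph{explosion} construction: a white vertex $v$ of valence $n$, with cyclic order $(h_0,\dots,h_{n-1})$ starting at the start half-edge $h_0$, is replaced by an $n$-gon of vertices $u_0,\dots,u_{n-1}$ joined by admissible edges, with $h_i$ attached externally at $u_i$, and $h_0$ serving as the distinguished leaf of the admissible cycle at $u_0$. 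A comparison of the degree formulas shows that $\deg(\phi(G))=\sum_{v\in V_w}(|v|-1)$ agrees with $\deg(G)$ precisely when all black vertices of $G$ are trivalent, so I set $\phi(G)=0$ whenever $G$ has a black vertex of valence $\geq 4$ and extend linearly. Orientations are transported by inserting the new cycle vertices and half-edges in place of each exploded white vertex in the orientation form of $G$, using the conventions of Section~\ref{signsub}.

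\smallskip

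The chain map property is checked on generators, case by case. For all-trivalent $G$, blowing up a trivalent black vertex would produce a forbidden bivalent black vertex, so the only trivalent contributions to $dG$ come from blow-ups of white vertices; these correspond bijectively (and with matching signs) to collapses of admissible edges in $\phi(G)$, since peeling off two cyclically consecutive non-start half-edges at $v$ into a new trivalent black vertex matches the collapse of the admissible edge $u_i u_{i+1}$ between the corresponding $n$-gon vertices. When $G$ has a single $4$-valent black vertex and all others trivalent, $\phi(G)=0$, and the trivalent part of $dG$ consists of the two blow-ups $G_1,G_2$ of the $4$-valent vertex (a fat $4$-valent vertex admits exactly two planar splits). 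Both $\phi(G_1)$ and $\phi(G_2)$ arise as trivalent blow-ups of the same underlying $p$-admissible fat graph, namely the explosion of the white vertices of $G$, and the collapse rule for orientations in $\oc{m}{p}-SD$ forces these two representations to carry opposite orientations, so their sum vanishes. In all remaining cases (a black vertex of valence $\geq 5$, or more than one $\geq 4$-valent black vertex) every blow-up retains a $\geq 4$-valent black vertex, so both $d\phi(G)$ and $\phi(dG)$ vanish.

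\smallskip

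Since $\phi(N)=0$ by construction, the chain map property gives $\phi(dN)=0$, so $\phi$ descends to $\bar\phi\colon \oc{m}{p}-Graphs/(N+dN)\to \oc{m}{p}-SD$. For surjectivity, every equivalence class of $p$-admissible fat graphs has a representative whose non-admissible vertices are all trivalent (obtained by blowing up any $\geq 4$-valent non-admissible vertex inside its class), and such a representative is the explosion of a unique all-trivalent black and white graph, recovered by re-collapsing the admissible cycles. For injectivity, assume $\phi(c)=0$ and decompose $c=c_3+c_{\geq 4}$ with $c_3$ trivalent and $c_{\geq 4}\in N$. Then $\phi(c_3)=0$ in $\oc{m}{p}-SD$, so the exploded form of $c_3$ is a sum of elementary Sullivan relators, each generated by a non-admissible edge collapse $H\to H/e$; the graph $H/e$ has a single $4$-valent non-admissible vertex whose two trivalent blow-ups are precisely the endpoints of the relator, and de-exploding yields $\tilde G\in N$ with $d\tilde G\equiv G_1+G_2 \pmod{N}$, proving $c_3\in N+dN$. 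The main technical hurdle throughout is coherent sign bookkeeping: the orientation on blow-ups of black and white graphs, the orientation transported through the explosion map, and the orientation rule under non-admissible edge collapse in $\oc{m}{p}-SD$ must be compatible, but each such check reduces to a routine application of the rules laid out in Section~\ref{signsub}.
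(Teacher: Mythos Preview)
Your approach is essentially the paper's, run in the opposite direction: the paper collapses each admissible cycle of an essentially trivalent Sullivan diagram to a white vertex, and your explosion map $\phi$ is precisely the inverse of that on generators. The overall logic (bijection on trivalent generators, matching of differentials, identification of the Sullivan equivalence with the relation ``mod $N+dN$'') is the same.

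That said, there are two genuine gaps you should address. First, your explosion is under-specified at the start half-edge. You write that $h_0$ ``serves as the distinguished leaf of the admissible cycle at $u_0$'', but this is only literally correct when $h_0$ is already an unlabeled leaf. When $h_0$ is a labeled leaf or part of an edge, the admissible cycle needs a \emph{new} leaf at $u_0$, making that vertex $4$-valent (this is the paper's ``essentially trivalent'' case). This matters for your chain-map check: you only describe the blow-ups that peel off two consecutive \emph{non-start} half-edges, giving $n-2$ terms, but $d\phi(G)$ has $n$ admissible-edge collapses. The two missing blow-ups are those peeling off $\{h_0,h_1\}$ or $\{h_{n-1},h_0\}$; in these cases the new white vertex has start half-edge equal to the newly created edge, and when $h_0$ was an unlabeled leaf the $\lfloor\,\cdot\,\rfloor$ operation intervenes. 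You need to check these two cases explicitly to close the argument.

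Second, your injectivity step tacitly uses a lemma the paper actually proves: that two equivalent essentially trivalent Sullivan diagrams are connected by a chain passing only through diagrams with a \emph{single} $4$-valent vertex away from the admissible cycles. Without this, you cannot conclude that the kernel of $\phi$ on trivalent graphs is spanned by your ``elementary relators'' $G_1+G_2$ coming from $d\tilde G$ with $\tilde G\in N$. The paper's justification (that higher-valent vertices on admissible cycles admit only one cycle-preserving blow-up, up to moves away from the cycle) is short but not automatic, and should be included.
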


\begin{proof}
We first note that every Sullivan diagram can be represented by a graph with only trivalent vertices, except for the
vertices where the leaf of an admissible cycle is attached, which may be 4-valent. Indeed, if the graph has a
higher valence vertex away from the admissible cycles, one can blow it up in any manner one likes and obtain an
equivalent graph with trivalent vertices replacing the higher valence vertex. If there is a higher valence vertex 
on an admissible cycle, it has exactly 
two contiguous half-edges of that admissible cycle attached to it, unless the leaf of the admissible cycle is at that
position, in which case it has three such.  Any blow-up of that vertex 
which keeps the half-edges of the admissible cycle together produces
an equivalent graph with the property we want. 
For the purpose of the proof, we call such graphs {\em
  essentially trivalent}. 

Two essentially trivalent Sullivan diagrams are equivalent if and only if they are
equivalent through such Sullivan diagrams and diagrams 
with exactly one 4-valent vertex which is away from the cycles: a single valence 4 vertex at a time
suffices since we can do collapses and blow-ups one at a time, 
and no additional valence
4 (or 5) vertices on the admissible cycles are necessary because there is only one way of blowing-up
such a vertex if the two (or three) half-edges of the admissible cycles have to stay together, up to collapses and blow-ups away from
the admissible cycle. 

Given an essentially trivalent $\oc{m}{p}$--Sullivan diagram, we get a $\oc{m}{p}$--graph 
by collapsing the admissible cycles to white vertices. If the leaf of the $i$th admissible cycle is at a 3-valent
vertex, we place an unlabeled start-leaf at that position on the $i$th white vertex, and if it is at a 4-valent vertex, we remove
it and define the remaining half-edge after the collapse to be start half-edge. (See Figure~\ref{SDequiv} for an example.) 
\begin{figure}[h]
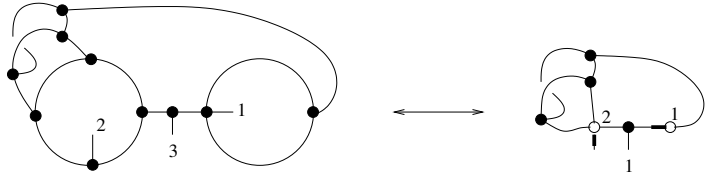

\begin{lpic}{SDequiv.2(0.5,0.5)}
\end{lpic}
\caption{Essentially trivalent Sullivan diagram (with admissible cycles the inside of the round circles) and the corresponding black and white graph}\label{SDequiv}
\end{figure}

Given a $\oc{m}{p}$--graph, one can similarly  obtain an
essentially  
trivalent $\oc{m}{p}$--Sullivan diagram by expanding the white vertices to circles and placing leaves at the spots corresponding
to start-edges.  These two maps are inverses of one  another, and the equivalence relations agree under
the maps by the above remarks. 

Note moreover that the degrees agree: 
the degree of a $\oc{m}{p}$--graph $G$ is $\sum_{v\in V_b}|v|-3 + \sum_{v\in V_w}|v|-1$. As all black vertices of the graphs occurring here are
trivalent, the first sum gives 0. On the other hand, the valence of a white vertex in $G$ is the number of admissible edges on the corresponding
admissible cycle of the associated Sullivan diagram.

We are left to check that the boundary maps also agree. Given an essentially trivalent Sullivan diagram, the boundary
map in $\oc{m}{p}$--SD is a sum of Sullivan diagrams, each  with a higher valence vertex on an admissible
cycle. Blowing up that vertex in the only possible manner to obtain an essentially trivalent graph corresponds exactly under the equivalence above to
a term in the differential of the associated  $\oc{m}{p}$--graph, and all the terms of the differential of this $\oc{m}{p}$--graph that do not have
valence 4 or more black vertices will occur this way.  
\end{proof}

Recall from \ref{OCsec} that the open-closed category $\OC$ has objects pairs of natural numbers $\oc{m}{n}$ and morphisms complexes 
$\OC(\oc{m_1}{n_1},\oc{m_2}{n_2})\subset\oc{n_1+m_1+m_2}{n_2}-Graphs$, the subcomplex of graphs with the first $n_1$ leaves alone in their boundary cycles. 
As composition in $\OC$ can only increase the valence of black vertices, it still gives a  well-defined composition when quotienting out by 
the graphs with black vertices of valence $4$ or more. 
Hence, using the above theorem, we can simply  define the category of Sullivan diagrams as a quotient category of $\OC$:

\begin{Def}

Let $\SD$ be the category with objects pairs of natural numbers $\oc{m}{n}$, with $m,n\ge 0$, and morphisms from $\oc{m_1}{n_1}$ to 
$\oc{m_2}{n_2}$ the quotient of $\OC(\oc{m_1}{n_1},\oc{m_2}{n_2})$ by the graphs having black vertices of
valence higher than 3 and by the boundary of such graphs.

\end{Def}

Note that in terms of ``classical'' Sullivan diagrams, as in Definition~\ref{SDdef}, admissible cycles are considered here as outgoing boundary
circles, while incoming boundary circles are ordinary cycles in the graph. The composition of Sullivan diagrams $G_1,G_2$ is  defined in classical terms by gluing the $i$th
admissible cycle $G_1$ to the $i$th incoming cycle of a graph $G_2$ by attaching the edges which had boundary points on this admissible cycle of
$G_1$ to edges of admissible cycles of $G_2$ lying on its $i$th incoming cycle, in all possible way respecting the cyclic ordering. This is because,
in terms of black and white graphs, composition is defined by attaching the edges of the first graph at vertices of the second along the corresponding
cycle in all possible ways, but attaching edges at black vertices creates vertices of valence 4 or higher, and hence is trivial in Sullivan
diagrams. On the other hand, attaching edges at white vertices corresponds to attaching at admissible edges in the classical picture.

\smallskip

By definition, we have a quotient functor 
$$\pi:\OC\to\SD$$
A direct consequence of  Theorem~\ref{SDequivthm} is the following:

\begin{prop}
The quotient functor $\pi\colon\OC\to\SD$ induces an isomorphism 
$$H_0(\OC(\oc{m_1}{n_1},\oc{m_2}{n_2}))\cong H_0(\SD(\oc{m_1}{n_1},\oc{m_2}{n_2}))$$
for each $n_1,m_1,n_2,m_2$.  
\end{prop}

From Remark~\ref{cactirem}, we have in addition that the component of $\SD(\oc{0}{1},\oc{0}{p})$ of Sullivan diagrams of underlying topological type a genus 0 surface with $p+1$ boundary components is quasi-isomorphic to the same component in $\OC(\oc{0}{1},\oc{0}{p})$, as both have homology that of the framed little discs, i.e.~the $p$th component of the BV operad. As is to be expected, the map $\pi$ on this component is a quasi-isomorphism.

\begin{prop}\label{non-vanish}
On the component of a surface of genus 0 with $p+1$ boundary components, the map $\pi:\OC(\oc{0}{1},\oc{0}{p})\to \SD(\oc{0}{1},\oc{0}{p})$  is a quasi-isomorphism. 
\end{prop}

This result is closely related to  \cite[Thm 6.6]{War12} and \cite[Prop 3.15]{KauSch10} is its spineless version. As this result is easiest proved once our machinery is set-up, we postpone its proof to Section~\ref{strict_section} at which time we will have all the ingredients in place.

\medskip

In general, the map $\pi$ is however not a quasi-isomorphism. 
Given a topological type of surface, the chain complex of Sullivan diagrams of that topological type computes the homology of a certain harmonic compactification of moduli spaces of Riemann surfaces of that type. More precisely, $\SD$ is a cellular chain complex for a space of metric Sullivan diagrams $SD$ (see \cite[Def 3.16]{EgaKup}) and the following holds for this space

\begin{thm}\label{compactification}
$SD$ is homeomorphic to the unimodular harmonic compactification of moduli space and the map $\OC\to \SD$ models the inclusion of moduli space in its compactification. 
\end{thm}

\begin{proof} As in \cite{Ega15,EgaKup}, let $\mathcal{F}at^{ad}$ denote the category of admissible fat graphs under edge collapses and $\mathcal{MF}at^{ad}$ the space of metric admissible fat graphs. 
Then the map in the statement identifies as a chain version of the map $|\mathcal{F}at^{ad}|\arsim \mathcal{MF}at^{ad}\to SD$ in \cite[Thm 1.1]{EgaKup} under the identification 
\cite[Thm 4.38]{Ega15} of $\OC$ as a chain model for $|\mathcal{F}at^{ad}|$. 
Indeed, Theorem 4.38 of \cite{Ega15} uses a filtration of $|\mathcal{F}at^{ad}|$ by mixed degree (see Definition 4.13 in that paper) to show that black and white graphs define a quasi-cell decomposition of $|\mathcal{F}at^{ad}|$. The mixed degree is also well-defined for Sullivan diagrams seen as equivalence classes of admissible fat graphs (as in our Definition~\ref{SDdef}), and the quasi-cell decomposition becomes an actual cell decomposition by $\SD$ in the case of Sullivan diagrams. The quotient map from admissible fat graphs to Sullivan diagrams respects these decompositions and is, in terms of the decomposition, the map in the statement. 
\end{proof}

As we will see below, the case described in Proposition~\ref{non-vanish} above is rather particular, and in fact this map annihilates a large part of the known homology of moduli space in positive genus.  Despite this, this map stays of main interest to us both because it is a compactification, and because it will play a role in the present paper when studying the action of the homology of moduli space on the Hochschild complex of strict Frobenius algebras (see Corollary \ref{factor_action_cor}). 
Before getting to our vanishing result, we first given an example that illustrates that this map is also not be surjective in homology in general.

\medskip

The chain complex of
Sullivan diagrams is a lot smaller than that of all fat graphs, or all $\oc{m}{p}$--graphs, and hence computations of its homology are
more approachable. It is for example not hard to compute that the component of the pair of pants in $\SD(\oc{0}{2},\oc{0}{1})$ is a complex that computes the
homology of $S^3\x S^1$. The corresponding component of $\OC$ computes the homology of the framed disk operad $fD(2) \simeq S^1\x S^1\x S^1$.  
The map $\OC\to\SD$ in this case is induced by 
the canonical embedding of the first two $S^1$--factors as a standard torus in the 3-sphere.  

Note that Sullivan diagrams are more fundamentally asymmetric in their inputs/outputs than black and white graphs. Indeed, from Proposition~\ref{non-vanish} we have that  
$\SD(\oc{0}{1},\oc{0}{2})\simeq\OC(\oc{0}{1},\oc{0}{2})$, and hence
$$H_*(\SD(\oc{0}{1},\oc{0}{2}))\cong H_*(\OC(\oc{0}{1},\oc{0}{2})) \cong H_*(\OC(\oc{0}{2},\oc{0}{1})) \not\cong  H_*(\SD(\oc{0}{2},\oc{0}{1})).$$

We refer to \cite[Sec 4]{Wah12} and \cite[Paper C]{Ega14Abis} for further computations of homology of Sullivan diagrams.

\smallskip

The fact that $\SD$ is a (quite drastic) quotient of $\OC$ makes one expect that, in homology, the projection $\pi$ kills many classes. We make this precise by analysing the map componentwise: denote by 
$$\pi_S:\OC_S(\oc{m_1}{n_1},\oc{m_2}{n_2}))\rar \SD_S(\oc{m_1}{n_1},\oc{m_2}{n_2}))$$ 
the functor $\pi:\OC\to\SD$ restricted to the component of morphisms of type $S$,
where $S$ is a generator in $H_0\OC_S(\oc{m_1}{n_1},\oc{m_2}{n_2}))\cong H_0\SD_S(\oc{m_1}{n_1},\oc{m_2}{n_2}))$, i.e.~a topological type of cobordism. 

We have the following general vanishing result: 

\begin{prop}\label{strictvanish}
Suppose $m_1+m_2+n_1>0$ and $S$ is a generator of $H_0\OC(\oc{m_1}{n_1},\oc{m_2}{n_2}))$ which is a connected surface of genus $g$. 
Then there exists
$S'\in H_0\OC(\oc{m_1}{n_1},\oc{m_2+1}{0}))$ and a
map $f:\OC_{S'}(\oc{m_1}{n_1},\oc{m_2+1}{0}))\to \OC_S(\oc{m_1}{n_1},\oc{m_2}{n_2}))$ which is an isomorphism in homology in
degrees $*\le\frac{2g}{3}$ and such that the image of the composition
$$\OC_{S'}(\oc{m_1}{n_1},\oc{m_2+1}{0}))\sta{f}{\rar} \OC_S(\oc{m_1}{n_1},\oc{m_2}{n_2}))\sta{\pi}{\rar} \SD_{S}(\oc{m_1}{n_1},\oc{m_2}{n_2}))$$
is concentrated in
degree 0. In particular, the stable classes of positive degree map to 0 under the map 
 $H_*(\pi):H_*(\OC)\to H_*(\SD)$. 
\end{prop}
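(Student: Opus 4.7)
The plan is to realize $f$ as an explicit gluing by a fixed low-complexity cobordism $T$, and then verify the two halves of the proposition separately: the Sullivan-diagram statement by bookkeeping, and the homology-isomorphism statement by invoking Harer-Ivanov stability. Fix a graph $T\in \OC(\oc{1}{0},\oc{0}{n_2})$ representing the genus-$0$ cobordism from one open incoming boundary to $n_2$ closed outgoing boundaries: take $T$ to be a trivalent tree on $n_2-1$ black vertices connecting the unique labeled leaf to $n_2$ univalent white vertices, with the single half-edge at each white vertex serving as its start half-edge. Let $S'$ be the unique connected topological type in $\pi_0\OC(\oc{m_1}{n_1},\oc{m_2+1}{0})$ of genus $g$ with free-boundary count adjusted so that $(\mathrm{id}_{m_2}\sqcup T)\circ S'$ recovers $S$, which is forced by an Euler-characteristic computation. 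I would then define $f(G):=(\mathrm{id}_{m_2}\sqcup T)\circ G$, which lands in the component $S$ by construction.

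For the Sullivan-diagram statement, I would factor $\pi_S \circ f$ as
\[
\OC_{S'}(\oc{m_1}{n_1},\oc{m_2+1}{0})
\xrightarrow{\pi_{S'}}
\SD_{S'}(\oc{m_1}{n_1},\oc{m_2+1}{0})
\xrightarrow{(\mathrm{id}_{m_2}\sqcup \pi(T))\circ -}
\SD_S(\oc{m_1}{n_1},\oc{m_2}{n_2}),
\]
and observe that generators of the middle space have $p=0$ admissible cycles (since $S'$ has no outgoing closed boundaries) and hence Sullivan degree $0-0=0$. Post-composition with $\pi(T)$ introduces the $n_2$ admissible cycles contributed by $T$, each of length exactly $1$ by our choice of $T$ (every white vertex of $T$ is univalent, so its admissible cycle has a single edge). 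The image therefore lies in Sullivan degree $n_2-n_2=0$, as required.

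The remaining assertion—that $f$ induces a homology isomorphism in degrees $*\le 2g/3$—is the main technical content. Topologically, $f$ corresponds to gluing the fixed genus-$0$ cobordism $T$ onto $S'$ along one open outgoing boundary, altering the boundary profile but preserving the genus. I would invoke the Harer-Ivanov stability theorems for open/closed moduli of Riemann surfaces as formulated in \cite{godin07} and \cite{Wah12}, using the hypothesis $m_1+m_2+n_1>0$ to guarantee that $S'$ carries enough boundary structure for the standard stabilization to apply. Granting this, the final assertion about stable classes follows from combining the two results: a positive-degree stable class in $H_*(\OC_S)$ lifts through $f_*$ within the stable range, but its preimage then factors through $\SD_{S'}$, which is concentrated in Sullivan degree $0$, so its image under $H_*(\pi_S)$ vanishes. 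The main obstacle will be identifying $f$ with an established stabilization map and confirming that the $2g/3$ range applies in this open-closed setting; the degree-$0$ half of the argument, by contrast, is essentially a combinatorial bookkeeping.
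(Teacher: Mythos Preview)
Your approach is essentially the same as the paper's: your cobordism $T$ is precisely the paper's ``$n_2$-legged pair of pants'' $P$, the map $f$ is defined identically as post-composition with this degree-$0$ graph, the Sullivan-diagram argument factors through the same commutative square (using that $\SD(\oc{m_1}{n_1},\oc{m_2+1}{0})$ has no white vertices and hence is concentrated in degree $0$), and the homology isomorphism is Harer stability (the paper cites \cite{Har85,Bol09,RW09} for the $2g/3$ range).

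One point to flag: your construction of $S'$ tacitly assumes such a topological type exists in $\pi_0\OC(\oc{m_1}{n_1},\oc{m_2+1}{0})$ with $T\circ S'\cong S$, but when $m_1=m_2=0$, $n_1>0$, and $S$ has no free boundary components, a quick boundary-component count shows no such $S'$ can exist (the gluing always produces at least one free boundary from the merged circle). The paper handles this corner case by first invoking the tautological isomorphism $\OC_S(\oc{0}{n_1},\oc{0}{n_2})\cong\OC_{\bar S}(\oc{1}{n_1-1},\oc{0}{n_2})$ (reinterpreting one incoming closed boundary as an incoming open boundary alone on its circle), which reduces to the case $m_1+m_2>0$ and lets you place the new outgoing open interval on that same circle. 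This is a one-line fix and does not affect your overall strategy.
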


Here by a {\em stable class}, we mean a class in that lives in 
$H_*(\OC_S(\oc{m_1}{n_1},\oc{m_2}{n_2}))$ in the range of degrees $*\le\frac{2g}{3}$ for $g$ the
genus of the component of lowest genus in $S$. The terminology {\em stable} is justified by the fact that the map 
$H_*(\OC_S(\oc{m_1}{n_1},\oc{m_2}{n_2}))\to H_*(\OC_{P\circ S}(\oc{m_1}{n_1},\oc{m_2}{n_3}))$ 
induced from composition in $\OC$ with a chosen element $[P]\in H_0(\OC_T(\oc{m_2}{n_2},\oc{m_2}{n_3}))$ for $P$ a pair of pants (union some identities) glued along one of two circles, 
induces an isomorphism in this range of degrees. 
This is a consequence of Harer's stability theorem  (\cite{Har85},
with the improved range of \cite{Bol09,RW09} and \cite{Wah13} for punctures in $S$). 
Indeed, $\OC_S(\oc{m_1}{n_1},\oc{m_2}{n_2})$ is a chain complex computing the homology of the mapping class groups $\operatorname{Mod}(S):=\pi_0\Dif(S\ {\rm rel}\ \del_{in}\cup \del_{out})$ and composition with $[P]$ corresponds to the map induced on the homology of the mapping class groups by gluing $P$ and extending the diffeomorphisms $S$ to $P\circ S$ by the identity on $P$, which is the way the stabilization maps are classically defined.

The map $f$ in the proposition in the case $m_1=m_2=n_1=0$ is a little more subtle. An analogous statement can be made though using in place of $f$ a map that replaces
a fixed boundary by a free boundary, which is not an isomorphism in homology stably. 

\begin{proof}
The idea of the proof is as follows: Sullivan diagrams have their non-zero degree concentrated at white vertices, i.e.~outgoing closed boundaries, as black vertices in Sullivan diagrams can only be of valence 3. Now any surface with $n_2$ outgoing closed boundary components and $b+1$ other fixed boundary components can be constructed from a surface with $b+1$ boundary components by attaching an $n_2$--legged pair of pants. Homological stability says that adding such an $n_2$--legged pair of pants {\em in degree 0} induces an isomorphism in homology of the corresponding moduli spaces. Hence any homology class in the stable range can be written ``without using white vertices'' and hence must map to zero in Sullivan diagrams unless it is of degree 0. 

We make this argument precise now. 

Suppose first that $m_1+m_2>0$. Then $S'$ can be obtained from $S$ by gluing discs on the $n_2$ closed outgoing boundaries of $S$
and adding a open outgoing boundary on a boundary component containing some other open boundary. We can reconstruct the topological
type of $S$ from
$S'$ by gluing a $n_2$--legged pair of pants $P$ along an open boundary as shown in Figure \ref{SiP}. 
\begin{figure}[h]
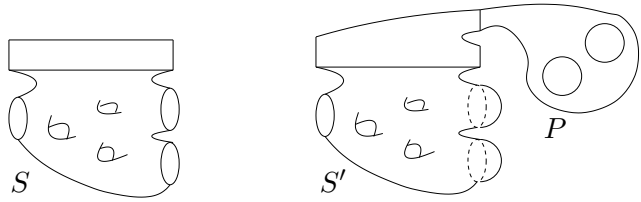

\begin{lpic}{SiP(0.4,0.4)}
\lbl[b]{4,2;$S$}
\lbl[b]{107,2;$S'$}
\lbl[b]{180,20;$P$}
\end{lpic}
\caption{The surfaces $S$ and $P\circ S'\cong S$}\label{SiP}
\end{figure}
Choosing a degree 0 representative of $P$ in $\OC(\oc{1}{0},\oc{0}{n_2}))$, the map $f$ above is just induced by composition with $P$ in
$\OC$. The fact that it is an isomorphism in homology in the given range follows from the stability theorem of \cite{Bol09,RW09} (and \cite{Wah13} for a version with punctures). Indeed, a neighborhood of $P$ union the boundary component of $S'$ it is attached to is an $(n_2+1)$--legged pair of pants. As the boundary of $S'$ that $P$ is attached to has a fixed interval, it may as well be assumed to be completely fixed. Then attaching $P$ along an interval is seen to be equivalent to attaching an $(n_2+1)$--legged pair of pants along the whole boundary, which is a composition of stabilization maps. 

The fact that the composition $\pi_S\circ f$ lands in degree 0 follows from the following two facts: \\
1) the diagram 
$$\xymatrix{ \OC(\oc{m_1}{n_1},\oc{m_2+1}{0}))\ar[rr]^-{f=P\circ\ - } \ar[d]^\pi & & \OC(\oc{m_1}{n_1},\oc{m_2}{n_2}))\ar[d]^\pi\\
\SD(\oc{m_1}{n_1},\oc{m_2+1}{0}))\ar[rr]^-{\pi(P)\circ\ -}&& \SD(\oc{m_1}{n_1},\oc{m_2}{n_2}))
}$$ commutes as $\pi$ is a functor, \\
2) the complex $\SD(\oc{m_1}{n_1},\oc{m_2+1}{0}))$ is concentrated in degree 0 as graphs in this complex have no white vertices. 

For $n_1>0$, we have an isomorphism $\OC_S(\oc{m_1}{n_1},\oc{m_2}{n_2}))\cong \OC_{\bar S}(\oc{m_1+1}{n_1-1},\oc{m_2}{n_2}))$,
and similarly for $\SD$, for $\bar S$ the surface obtained from $S$ by replacing an incoming closed boundary by an incoming open
boundary, alone on that component. This reduces the case $m_1+m_2=0$ with $n_1>0$ to the previous one. 
\end{proof}

We end by mentioning an alternative definition of Sullivan diagrams.

\begin{rem}[Sullivan diagrams as arcs in a surface]\label{KaufSD}{\rm
In \cite[Sec 2.3, 6.3]{Kau10}, Kaufmann defines an open-closed category of {\em arc families of  Sullivan types} $\operatorname{Sull}_1^{\rm c/o}$ (with the ``1'' indicating a normalized version). We explain here how this category relates to the category  
 $\SD$ we defined in this section.  Briefly, we have that the cellular chain complex of $\operatorname{Sull}_1^{\rm c/o}$ is isomorphic to a subcategory of 
$\SD$ when restricted to the closed part and switching the role of
incoming and outgoing boundaries. We explain this in more details now. 

For simplicity, we restricted to the case of a single brane. A {\em windowed surface} $F$ with $m_1$ (resp.~$n_1$) incoming open (resp.~closed) boundaries and $m_2$ (resp.~$n_2$) outgoing open (resp.~closed) boundaries is a surface with $m_1+m_2+n_1+n_2$ marked points in its boundary such that the last $n_1+n_2$ are alone on their boundary component, together with a labelling as $n_1$ ``in'' and $n_2$ ``out'' of those boundaries and of $m_1$ ``in'' and $m_2$ ``out'' of the arcs {\em in between} the other marked points in the boundary of $F$.    
An arc family of Sullivan type with $m_1$/$n_1$ incoming open/closed boundaries and $m_2$/$n_2$ outgoing open/closed boundaries is then defined as a weighted collection of arcs in such a surface $F$, where the arcs start at the incoming boundaries and end at the outgoing boundaries, and
such that the sum of the weights at each incoming boundary is equal to 1. (See the first picture in Figure~\ref{KaufSDpic} for an example with one incoming and two outgoing closed boundaries). These arc systems are considered modulo the action of the mapping class group of $F$. 
When $m_1=m_2=0$, one can reconstruct a classical metric Sullivan diagram (an element of the space $SD$ of Theorem~\ref{compactification}) from such a
collection of arcs by having a circle for each incoming closed boundary with a edge of length the associated weight for each arc starting at that
boundary. The chords are then obtained by choosing a fat graph representative of each component of the surface $F$ cut along the arcs, with a leaf for each of the marked points.  (See
Figure~\ref{KaufSDpic} for an example.) A collection of $k_1+\dots+k_{n_2}$ arcs corresponds to a cell $\De^{k_1-1}\times\dots\times\De^{k_{n_2}-1}$
represented by a diagram of degree  $k_1+\dots+k_{n_2}-n_2$ in our definition above. The boundary of the cells in the arc description is defined by forgetting arcs. This corresponds to gluing surfaces in the complement of the arcs, which corresponds to collapsing an edge in the classical description. 
\begin{figure}[h]
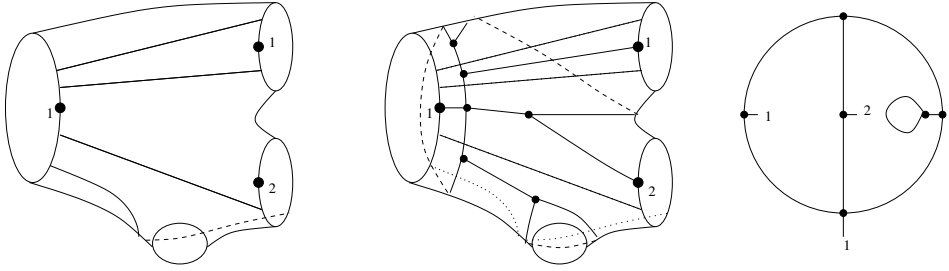

\begin{lpic}{KaufSD(0.45,0.45)}
\end{lpic}
\caption{A Sullivan diagram described by a system of arcs in a surface, the dual Sullivan diagram in the surface, and the same Sullivan diagram without the surface}\label{KaufSDpic}
\end{figure}

This shows how to get from an arc family to a Sullivan diagram in the case of closed boundaries. The map is injective but not quite surjective. Indeed,
our closed Sullivan diagrams are slightly more general in that, in terms of arc families, we allow arcs that end at unmarked boundary components. 
In Section~\ref{KP_section},  we will discuss the full category of open-closed arc families of Sullivan type, in their relation to the Kaufmann-Penner model of open-closed string interactions. 
}\end{rem}

\section{Algebras}\label{Algsec}

In this section, we describe the main types of algebras we will consider in the present paper. We use the formalism of props of MacLane \cite[\S
24]{McL65}, and describe
algebraic structures via symmetric monoidal functors from given symmetric monoidal categories: recall that a prop, {\em product and permutation
  category}, in the category $\Comp$ 
is a symmetric monoidal dg-category with objects the natural numbers, and an algebra over that prop is a symmetric monoidal functor from that category
to $\Comp$. We describe in this section the main props we will use, and give descriptions of  their algebras. (A good introductory reference for props
and operads is \cite{Val12}).

\smallskip

If $\e$ is a symmetric monoidal category and $\Phi:\e\to\Comp$ is a functor, we say that $\Phi$ is {\em symmetric monoidal} if there are maps 
$\Phi(n)\ot\Phi(m)\to\Phi(n+m)$ natural in $n$ and $m$ and compatible with the symmetries of $\Comp$ and $\e$. 
We say that $\Phi$ is {\em split monoidal} if these maps are isomorphisms and
{\em h-split}  if they are quasi-isomorphisms.

\subsection{$\Ai$--algebras}\label{Aialg}
Recall from \ref{Aisec} the symmetric monoidal dg-category $\Ai$. This category is freely generated as a symmetric monoidal category by the morphisms from
$k$ to $1$, for each $k\ge 2$, represented by a tree (or rather a {\em corolla}) 
$m_k$ of degree $k-2$ with a single vertex with $k$ incoming and $1$ outgoing leaves. 
A symmetric monoidal functor $$\Phi:\Ai\to\Comp$$ corresponds precisely to
giving an $A_\infty$--structure on $\Phi(1)$ with multiplication and higher multiplications 
$$\mu_k:\Phi(1)^{\ot k}\to \Phi(k)\sta{\Phi(m_k)}{\rar} \Phi(1)$$ 
for each $k\ge 2$, where the first map uses the monoidal structure of $\Phi$. 
The fact that this defines an $\Ai$--structure comes from the fact
that planar, or equivalently ``fat'' trees
define a cellular decomposition of Stasheff's polytopes. See for example
\cite[C.2, 9.2.7]{LodVal}.

There is an additional generating map
$u:0\to 1$ of degree 0 in the category $\Ai^+$, a singly labeled outgoing leaf, which behaves as a unit for the multiplication $\mu_2$. 
So if $\Phi$ extends to a symmetric monoidal functor with source $\Ai^+$, the $\Ai$--algebra $\Phi(1)$ is equipped with a unit 
for the multiplication $\mu_2$.  This is what we will mean by a {\em unital $\Ai$--algebra}.

More generally, we will consider in this paper symmetric monoidal dg-categories $\e$ equipped with a symmetric monoidal functor $i:\Ai\to\e$, so that 
$\e$--algebras, i.e.~symmetric monoidal functors $\e\to\Comp$ have an underlying $\Ai$--algebras by precomposition with $i$. 
We will call such a pair $(\e,i)$ a {\em prop with $\Ai$--multiplication}. If $\e$ admits a functor $i:\Ai^+\to\Comp$, we call the pair $(\e,i)$ a 
{\em prop with unital $\Ai$--multiplication}.

\subsection{Frobenius and $\Ai$--Frobenius algebras}\label{Frobsec}

By a {\em symmetric Frobenius algebra}, or just {\em Frobenius algebra} for short, 
we mean a dg-algebra with a non-degenerate symmetric pairing. A Frobenius algebra can alternatively be defined as
a chain complex with is a unital algebra and a counital coalgebra, such that the multiplication and coproduct satisfy the Frobenius identity:
$$\nu(ab) = \sum_i a_i' \otimes a_i'' b = \sum_j a b_j' \otimes b_j''$$
where $a, b$ are elements of the algebra, $\nu$ is the coproduct, $\nu(a) = \sum_i a_i' \otimes a_i''$, and $\nu(b) = \sum_j b_j' \otimes b_j''$. 
(See \cite[2.2,2.2.9,2.3.24]{Koc04} for the various equivalent definitions of (symmetric) Frobenius algebras.)

The cohomology of a closed manifold is an example of a Frobenius algebra, though with a pairing of degree $-d$ for $d$ the dimension of a manifold. 
Because of this, Frobenius algebras are sometimes called a {\em Poincar\'e duality
algebra} (see e.g.~\cite[Def 2.1]{lambrechts_stanley} in the commutative setting). 

Recall from \ref{OO} the open cobordism category $\OO$ with objects the natural numbers and morphisms the chain complexes of moduli spaces of open
cobordisms. We denote by $H_0(\OO)$ the dg-category with the same objects but with morphisms from $n$ to $m$ concentrated in degree 0, given by 
$H_0(\OO)$. In other words, the morphisms from $n$ to $m$ is the free module on the topological types of cobordisms from $n$ to $m$
intervals. Corollary 4.5 of \cite{LauPfe} says that split symmetric monoidal functors $\Phi:H_0(\OO)\to\Comp$ are in 1-1 correspondence with symmetric
Frobenius algebras.  We note that split monoidality is in some sense an analogue the assumption of \emph{cyclicity} for algebras over cyclic operads, since $H_0(\OO)$ has a built-in cyclic symmetry.

\smallskip

Replacing $H_0(\OO)$ in the above by the original open cobordism category $\OO$, we get an $\Ai$--version of Frobenius algebras: 
We call a split symmetric monoidal
functor $$\Phi: \OO \to \Comp$$ (or by abuse of language its value at 1, $\Phi(1)$) an
\emph{$\Ai$--Frobenius algebra}.  If $\Phi$ is  $h$--split, $\Phi$ could be
called an \emph{extended $\Ai$--Frobenius algebra}, following
\cite[7.3]{costello07}.  In either case, note that by restriction along $i: \Ai \to \OO$,
$\Phi$ equips $\Phi(1)$ with the structure of an $\Ai$--algebra (in fact an $\Ai^+$--algebra).  

In addition to the $\Ai$--structure, the morphism $tr:1 \to 0$ in $\OO$ given by a single incoming
labeled leaf (the mirror of the unit $u$) gives a map $tr:\Phi(1) \to \Phi(0)$.  
When $\Phi$ is $h$--split, $\Phi(0)$ is quasi-isomorphic to $\Z$
(concentrated in degree 0).  The map induced by the trace in homology
$$tr: H_*(\Phi(1)) \to H_*(\Phi(0)) = \Z,$$
which, along with the associative multiplication coming from the $\Ai$ structure,
equips $H_*(\Phi(1))$ with the structure of a Frobenius algebra.  When $\Phi$
is \emph{split}, $\Phi(0) = \Z$, so one gets a trace defined on $\Phi(1)$,
which is non-degenerate.

The structure of an $\Ai$--Frobenius algebra is generated by this $\Ai$--structure together with the trace; that is, all chain level operations from the moduli of surfaces in the open category can be derived from these operations, as is indicated in section 7.3 of \cite{costello07}.  Roughly speaking, having a non-degenerate trace allows one to construct the pairing and the copairing.  Together with the $\Ai$--structure, one can recover any fat graph.  We expand upon this in the following section.

\subsection{Positive boundary or ``noncompact''  $\Ai$--Frobenius algebras}\label{posalg}

Define the \emph{positive boundary open cobordism category} $\OO^b$ to be the subcategory of $\OO$ with the same objects and whose morphisms are given by the subcomplex of fat graphs whose associated topological type is a disjoint union of surfaces, all of which have at least one outgoing boundary.

There are certain morphisms in $\OO^b$ whose role should be highlighted.  Certainly, $\OO^b$ contains all of the category $\Ai^+$, and in particular the corollas $m_k: k \to 1$.  It also contains the \emph{coproduct} $\nu$ -- the morphism from 1 to 2 given by the corolla with one incoming and two outgoing leaves.

\begin{prop} \label{positive_prop}

The category $\OO^b$ is generated as a symmetric monoidal category by its subcategory $\Ai^+$ and the coproduct $\nu$.

\end{prop}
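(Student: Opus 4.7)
The plan is to give a direct constructive decomposition: the goal is to show that any connected fat graph $G$ in $\OO^b(n,m)$ is built, via composition and tensor product, from $u$, the corollas $m_k \in \Ai^+$, and $\nu$. The symmetric monoidal reduction to connected $G$ is immediate because disjoint union of graphs is the tensor product in $\OO^b$. For connected $G$, the positive-boundary hypothesis guarantees $m \geq 1$, so one can select a distinguished outgoing leaf $\ell$ to serve as a ``root.'' The argument will proceed by double induction: first on the first Betti number $b_1(G)$ (the number of independent cycles), and within each, recursively over the vertices.

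For the base case $b_1(G) = 0$ (trees), root $G$ at $\ell$ and traverse. At each vertex $v$, the unique half-edge pointing toward $\ell$ plays the role of output and the remaining half-edges play the role of inputs. If the subtree beneath $v$ contains further outgoing leaves of $G$, insert a $\nu$ at the appropriate edge to branch them off and recurse on both branches; otherwise, $v$'s contribution is an $m_k \in \Ai^+$, and any subtree containing no incoming leaves is introduced by the unit $u$. For the inductive step in genus, choose an edge $e$ lying on a cycle of $G$ and cut it at its midpoint, producing $G' \in \OO^b(n+1, m+1)$ with $b_1(G') = b_1(G) - 1$. Since $e$ was on a cycle, $G'$ remains connected and is still of positive boundary. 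By the inductive hypothesis $G'$ is a composition of generators, and $G$ is recovered from $G'$ by pre-composing $\nu$ on the strand carrying the new output leaf and post-composing $m_2$ to glue the new output back to the new input, together with the appropriate block permutation in between.

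The principal obstacle is that this construction produces only trivalent fat graphs, since every generator is a trivalent corolla (or an exceptional leaf) and neither composition nor tensor product can raise the valence at a single vertex. To handle fat graphs with vertices of valence $k \geq 4$, the proposition must be read at the level of the generated sub-dg-category: any $k$-valent vertex is chain-equivalent, modulo $d$, to one of its trivalent blow-ups, since the differential of a graph with a single valence-$4$ vertex is precisely the signed sum of its three trivalent resolutions, and the general case follows by induction on valence excess. Carrying out this reduction carefully, and tracking the orientation signs through the conventions of Section~\ref{FatTrees}, is the delicate technical part; it may be most cleanly handled by identifying both the generated subcategory and $\OO^b$ with cellular models of the same moduli space of open positive-boundary Riemann surfaces, so that the inclusion is tautologically the identity on homology.
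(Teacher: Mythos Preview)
Your approach has a genuine gap in the inductive step on $b_1(G)$. Cutting an edge $e$ on a cycle to form $G'\in\OO^b(n+1,m+1)$ is fine, but recovering $G$ from $G'$ requires connecting the new output leaf back to the new input leaf. This is a \emph{partial trace}, and a trace is not a composition in a symmetric monoidal category: it needs either a pairing $2\to 0$ (which lies outside $\OO^b$, having no outgoing boundary) or extra traced structure. Your proposed fix, pre-composing $\nu$ and post-composing $m_2$, does not perform a trace; it inserts two new trivalent vertices and produces a different fat graph, not $G$.

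Your ``principal obstacle'' paragraph then compounds the problem. First, the premise is false: $\Ai^+$ contains the corollas $m_k$ for every $k\ge 2$, which are $(k{+}1)$--valent, so the generating set already produces vertices of arbitrary valence. Second, even granting the premise, invoking the differential to relate a high-valence vertex to its trivalent blow-ups would only show that the generated subcategory is quasi-isomorphic to $\OO^b$, whereas the proposition asserts generation on the nose as a symmetric monoidal category.

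The paper's argument avoids both difficulties by working directly with $G$ rather than by induction on $b_1$. One builds the copairing $C=\nu\circ u:0\to 2$ and observes that any corolla $c_{k,n}:k\to n$ with $n\ge 1$ equals $m_{k+n-1}$ precomposed with $n-1$ copies of $C$. Then, choosing a maximal tree $T\subset G$ rooted at an outgoing leaf and orienting $G$ so that every vertex has at least one outgoing half-edge, the star of each vertex is some $c_{k,n}$, and $G$ is literally the composition (in $\OO^b$) of these stars along the chosen orientation. No trace, no induction on loops, and no appeal to the differential are needed.
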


\begin{proof}

First, define the \emph{copairing} $C := \nu \circ u :0 \to 2$; this is an exceptional graph with no vertices.  Composing a disjoint
union of $n-1$ copies of $C$ with $m_{k+n-1}$ gives the corolla\footnote{We should be careful to indicate the labeling of the leaves in
  $c_{k,n}$, but since we will consider the \emph{symmetric} monoidal category generated by these, any choice will suffice.} $c_{k, n}: k \to
n$ for any $k \geq 0$ and $n\geq 1$.  Note that we can write $m_k = c_{k, 1}$, $u = c_{0,1}$, $\nu =
c_{1, 2}$, and $C = c_{0, 2}$.  Then the symmetric monoidal subcategory generated by $\Ai$, $u$,  and $\nu$ is the same as the one
generated by all of the $c_{k, n}$.
\begin{figure}[h]
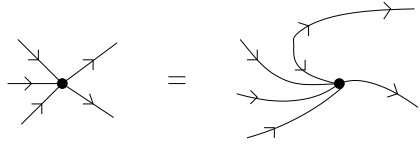

\begin{lpic}{cnk(0.45,0.45)}
\end{lpic}
\caption{the corolla $c_{3,2}$ as a composition $m_4\circ (C\sqcup id)$}\label{cnk}
\end{figure}

Now let $\Gamma:m \to n$ be an arbitrary graph in $\OO^b$; we may assume that $\Gamma$ is connected and non-empty, and so $n\geq 1$.  Pick a maximal
tree $T$ of edges of $\Gamma$ and choose an outgoing leaf of $\Ga$ attached at a vertex $v$ (which is included in $T$ by maximality). 
There is a unique way to orient the edges of $T$ to make it rooted at $v$.  Extend that orientation (arbitrarily) to an orientation of $\Gamma$,
though keeping the ``in'' and ``out'' orientations of the leaves.  Since $T$ includes all of the vertices of $\Gamma$, there is always at least one outgoing edge (or leaf) at each vertex.  Thus the star of each vertex is $c_{k, n}$ for some value of $k$ and $n$.  Consequently $\Gamma$ is obtained as an iterated composition of (disjoint unions of) the $c_{k, n}$, and so is in the symmetric monoidal subcategory generated by them.
\end{proof}

The relations between these generators can be summarized (in a pithy if not particularly helpful way) by saying that two compositions of generators are equal if the fat graphs that they define are the same.  For instance, the Frobenius relation
$$(\mbox{coproduct} \sqcup id) \circ (id \sqcup \mbox{product}) =  (id \sqcup \mbox{coproduct}) \circ (\mbox{product} \sqcup id)$$
expresses the fact that the fat graphs in Figure~\ref{relation} are isomorphic.
\begin{figure}[h]
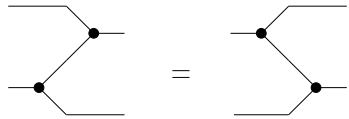

\begin{lpic}{relation(0.45,0.45)}
\end{lpic}
\caption{Frobenius relation}\label{relation}
\end{figure}

Noting that $\OO^b$ contains a copy of $\Ai^{op}$, extending the coproduct (though with no counit!), Proposition \ref{positive_prop} gives us:

\begin{cor}

A split symmetric monoidal functor $\Phi: \OO^b \to \Comp$ makes $A:=\Phi(1)$ into a unital $\Ai$--algebra and non-counital $\Ai$--coalgebra. 

\end{cor}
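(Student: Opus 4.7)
The plan is to obtain both structures on $A=\Phi(1)$ by restricting the functor $\Phi$ along two suitable subfunctors into $\OO^b$: the inclusion of $\Ai^+$ (for the unital $\Ai$-algebra) and an inclusion of the non-counital part of $\Ai^{op}$ (for the $\Ai$-coalgebra).

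For the first, I would observe that the symmetric monoidal subcategory $\Ai^+\subset \OO$ defined in \ref{Aisec} already sits inside $\OO^b$: each of its generators (the corollas $m_k$ for $k\ge 2$ and the unit $u:0\to 1$) has an outgoing leaf on every connected component, and this property is preserved under composition and disjoint union. Restricting $\Phi$ along the inclusion $i:\Ai^+\hookrightarrow \OO^b$ thus yields a split symmetric monoidal dg-functor $\Ai^+\to \Comp$, which by the discussion in \ref{Aialg} is exactly a unital $\Ai$-algebra structure on $A$.

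For the second, I would build an embedding $j$ of the non-counital part of $\Ai^{op}$ into $\OO^b$ by sending the corolla $m_k$ of $\Ai$ to its mirror image $c_{1,k}:1\to k$ for each $k\ge 2$, i.e. the tree with a single incoming leaf and $k$ outgoing leaves. Each $c_{1,k}$ has $k\ge 1$ outgoing leaves on its unique component, hence lies in $\OO^b$. The underlying fat graphs of the $c_{1,k}$ are the reflections of those of the $m_k$, and under the $\Z/2$-symmetry of $\OO$ swapping incoming with outgoing leaves, the Stasheff-polytope cell decomposition used in \ref{Aialg} to generate $\Ai$ is sent to one generated by the $c_{1,k}$. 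Therefore the symmetric monoidal subcategory of $\OO^b$ generated by the $c_{1,k}$ is isomorphic to $\Ai^{op}$ with the counit $u^{op}$ removed, and restricting $\Phi$ along $j$ supplies the $\Ai$-coalgebra operations $\Delta_k:A\to A^{\otimes k}$ satisfying the appropriate (co)associativity relations.

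The final point, and the explanation for why no counit appears, is that the mirror of the unit $u:0\to 1$ under the swap is the trace $\mathrm{tr}:1\to 0$ given by a single incoming labeled leaf; this represents a disk with only an incoming boundary and so lies in $\OO$ but not in $\OO^b$. The only step with any real content is checking that the $c_{1,k}$ satisfy the mirror Stasheff relations inside $\OO^b$; I expect this to be routine either by invoking the reflection symmetry just described, or directly by Proposition \ref{positive_prop} together with the fact (stated in the paragraph after that proposition) that two compositions of generators in $\OO^b$ coincide precisely when they define the same fat graph.
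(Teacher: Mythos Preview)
Your proposal is correct and follows essentially the same approach as the paper: restrict $\Phi$ along the inclusion $\Ai^+\hookrightarrow\OO^b$ for the unital $\Ai$-algebra structure, and along the embedding of the non-counital $\Ai^{op}$ (generated by the mirrored corollas $c_{1,k}$) for the $\Ai$-coalgebra structure, noting that the would-be counit $tr:1\to 0$ fails the positive-boundary condition. The paper states this in a single sentence before the corollary (``Noting that $\OO^b$ contains a copy of $\Ai^{op}$, extending the coproduct (though with no counit!)\dots''), while you have spelled out the details more fully.
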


\section{Bar constructions}\label{Bar}

In this section, we define the classical double bar construction, as studied by many authors, and a quotient version of it by symmetries occurring in
\cite{costello07}. This less well-know bar construction has the advantage of providing resolutions of symmetric monoidal functors. (See Proposition~\ref{Bmon}.)

\medskip

Given a dg-category $\C$ and dg-functors $\Phi:\C\to \Comp$ (which we can think of as a {\em $\C$--module}) and $\Psi:\C^{op}\to \Comp$ 
(a {\em $\C^{op}$--module}), define  the $p$th simplicial level of the double bar construction 
$$B_p(\Phi,\C,\Psi)=
\bigoplus_{\begin{subarray}{r}m_0,\dots,m_p\\ \in \Obj(\C)\end{subarray}}\Phi(m_0)\otimes \C(m_0,m_1)\otimes \dots\ot \C(m_{p-1},m_p)\otimes \Psi(m_p).$$
If $\C$ is symmetric monoidal with objects the natural numbers under addition, 
let $\Si\cong \coprod \Si_n$ denote the subcategory of $\C$ with the same objects and with morphisms the
symmetries in $\C$. 
Then we can define similarly 
$$B_p^\Si(\Phi,\C,\Psi)=
\bigoplus_{\begin{subarray}{r}m_0,\dots,m_p\\ \in \Obj(\C)\end{subarray}}\Phi(m_0)\otimes_{\Si} \C(m_0,m_1)\otimes_{\Si} \dots\ot_{\Si} \C(m_{p-1},m_p)\otimes_{\Si} \Psi(m_p)$$
where $X\ot_{\Si}Y$ denotes the quotient of $X\ot Y$ by $x.f\ot y\sim x\ot f.y$ for any $f\in \Si$ with $f$ acting by
pre- or post-composition on the middle factors and via $\Phi(f)$ and $\Psi(f)$ on the first and last factors. 

\smallskip

Denoting elements of $B_p(\Phi,\C,\Psi)$ by $a\otimes b_1\otimes \dots\otimes b_p\otimes c$, let $d_i: B_p\to B_{p-1}$,
the $i$th face map, be defined by 

$d_0(a\otimes b_1\otimes \dots\otimes b_p\otimes c)=\Phi(b_1)(a)\otimes b_2 \dots\otimes b_p\otimes c$

$d_i(a\otimes b_1\otimes \dots\otimes b_p\otimes c)=a\otimes b_1\otimes\dots \otimes b_{i+1}\circ b_{i} \otimes\dots\otimes b_p\otimes c$ for $0< i<p$

$d_p(a\otimes b_1\otimes \dots\otimes b_p\otimes c)=a\otimes b_1 \dots\otimes b_{p-1}\otimes \Psi(b_p)(c)$.\\
This makes $B(\Phi,\C,\Psi)=\oplus_{p\ge 0}B_p(\Phi,\C,\Psi)$, the {\em double bar construction}, into a semi-simplicial chain
complex, and a chain complex with differential $D_p=(-1)^p\delta+d$ where $\delta$ denotes the differential of $B_p(\Phi,\C,\Psi)$ as a tensor
product of chain complexes, and $d=\sum_{i=0}^p(-1)^id_i$ denotes the simplicial differential.

As all the face maps are well-defined over $\Si$, we have that $B^\Si(\Phi,\C,\Psi)=\oplus_{p\ge 0}B^\Si_p(\Phi,\C,\Psi)$ is also a semi-simplicial
chain complex. (In fact, $B(\Phi,\C,\Psi)$ is a simplicial chain complex, in that it admits well-defined degeneracies, but this is not
true for $B^\Si(\Phi,\C,\Psi)$.)

\medskip

Taking $\Psi=\C(-,m)$ to be the $\C^{op}$--module represented by an object $m$ of $\C$, we note moreover that the bar
construction $B(\Phi,\C,\C(-,m))$ is natural in $m$, i.e.~ we get a functor $B(\Phi,\C,\C):\C\to\Comp$ with value 
$B(\Phi,\C,\C(-,m))$ at $m\in \Obj(\C)$.

\begin{prop}\label{BCC}
For any functor $\Phi:\C\to\Comp$ 
there are quasi-isomorphisms of functors 
$$\al\colon B(\Phi,\C,\C)\ \arsim \  \Phi \ \ \ \ \textrm{and}\ \ \ \ \al^\Si\colon \ B^\Si(\Phi,\C,\C)\sta{\simeq}{\rar} \Phi$$
In particular, $B(\Phi,\C,\C(-,m))\simeq B^\Si(\Phi,\C,\C(-,m))$ for each $m$.  
\end{prop}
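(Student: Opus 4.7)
The plan is to exhibit a contracting homotopy of the augmented bar complex onto $\Phi(m)$, using the extra degeneracy obtained by inserting the identity $1_m \in \C(m,m)$ at the rightmost position, and to verify that the same argument descends to $B^\Si$.

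First I would define $\al$ on $B_0(\Phi,\C,\C(-,m))$ by $\al(a\ot c) = \Phi(c)(a)$, extended by zero in higher simplicial degrees. That $\al$ is a chain map with respect to the total differential $D_p = (-1)^p\delta + d$ reduces to three routine checks: the Leibniz rule for the structure map $c_\Phi$ (giving $\al\delta = d_\Phi\al$ on $B_0$), functoriality of $\Phi$ (giving $\al d_0 = \al d_1$ on $B_1$), and automatic vanishing in higher simplicial degrees. The same formula descends to $\al^\Si$ since $\Phi(c)(\Phi(\sigma)(a)) = \Phi(c\circ\sigma)(a)$.

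Next I would define the extra degeneracy $h_p \colon B_p \to B_{p+1}$ and a section $\iota \colon \Phi(m) \to B_0$ by
$$h_p(a\ot b_1\ot\cdots\ot b_p\ot c) = a\ot b_1\ot\cdots\ot b_p\ot c\ot 1_m, \qquad \iota(x) = x\ot 1_m.$$
A direct verification yields $d_i h_p = h_{p-1} d_i$ for $i\le p$, $d_{p+1}h_p = \mathrm{id}$ (because the $\C^{op}$-action gives $\Psi(c)(1_m) = 1_m\circ c = c$), $\al\iota = \mathrm{id}$, and $\delta h = h\delta$ (since $1_m$ is a cycle of degree zero, no Koszul sign arises). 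With the standard simplicial signs, $h$ assembles into a chain homotopy $H$ on the total complex satisfying $DH + HD = \mathrm{id} - \iota\al$, so $\al$ is a quasi-isomorphism.

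For $\al^\Si$, the essential step is that $h$ and $\iota$ descend to the $\Si$-quotient; this is the subtlety I anticipate as the main obstacle. It should go through because the generating $\Si_{m_i}$-relations on $B_p^\Si$ involve only adjacent factors strictly to the left of the newly inserted slot, so appending $\ot\, 1_m$ preserves each such relation verbatim, and $h$ therefore respects equivalence classes. All identities from the previous paragraph are preserved by the quotient map, so the same contracting homotopy argument shows $\al^\Si$ is a quasi-isomorphism. Naturality in $m$ is immediate from the formulas, and the final claim $B\simeq B^\Si$ follows by two-out-of-three applied to the commutative triangle $B \surj B^\Si \to \Phi(m)$.
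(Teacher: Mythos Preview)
Your proof is correct and follows essentially the same approach as the paper: define $\al$ by $\al_0(a\ot c)=\Phi(c)(a)$, define the section $\beta(a)=a\ot 1_m$ (your $\iota$), and use the extra degeneracy inserting $1_m$ on the right to build a contracting homotopy, then check that everything descends to $B^\Si$. The paper writes its homotopy in the slightly more elaborate form $h_i=s_p\cdots s_{i+1}\circ\eta\circ d_{i+1}\cdots d_p$ rather than using the pure extra degeneracy as you do, but the underlying mechanism and the verification for $B^\Si$ are the same.
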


The result is well-known for the usual bar construction $B$.  We recall the proof here and show that it also applies to
$B^\Si$.

\begin{proof}
Let $\al=\oplus_p\al_p:B(\Phi,\C,\C(-,m))=\oplus_pB_p(\Phi,\C,\C(-,m))\rar \Phi(m)$ be defined by   
$\al_0(a\ot c)=\Phi(c)(a)$ and $\al_p=0$ for $p>0$.  
This is natural in $m$. Let
 $\beta: \Phi(m)\to B(\Phi,\C,\C(-,m))$ be defined by $\beta(a)=a\otimes 1_m\in \Phi(m)\otimes\C(m,m)$, 
where $1_m$ here denotes the identity on $m$. 
We have $\al\circ\beta=id$ and $\beta\circ\al\simeq id$; an explicit chain homotopy is given by 
$h_i=s_p\circ\dots\circ s_{i+1}\circ \eta\circ d_{i+1}\circ\dots\circ d_p$, where $s_i$ is the $i$th degeneracy,
introducing an identity at the $i$th position, and $\eta$ is the ``extra degeneracy'' which introduces an identity at
the right-most spot. Explicitly, $h_i$ takes $a\otimes b_1\otimes \dots\otimes b_p\otimes c$ to 
$a\otimes b_1\otimes \dots\otimes b_i\ot (c\circ b_{p}\circ\cdots\circ b_{i+1})\ot 1_m\ot\dots\ot 1_m$. 
Hence $\al$ gives a natural transformation by quasi-isomorphisms between the functors 
$B(\Phi,\C,\C)$ and $\Phi$. 

For $B^\Si$, we now just note that the maps $\al$,$\beta$ and $h_i$ are well-defined over $\Si$. (For $h_i$, the degeneracies $s_j$ are
not well-defined but the above composition with $\eta$ is.)
\end{proof}

\begin{rem}\label{bar}{\rm 
More generally, one can show that $B(M,\C,N)\simeq B^\Si(M,\C,N)$ if $M$ or $N$ is quasi-free (i.e., free as a $\C$--module, if one ignores the
differential). 
}\end{rem}

\begin{prop}\label{Bmon}
If $\C$ is (symmetric) monoidal and $\Phi:\C\to \Comp$ is monoidal, then $B(\Phi,\C,\C)$ and $B^\Si(\Phi,\C,\C)$ are 
monoidal. 
If $\Phi$ is symmetric monoidal, then so is 
$B^\Si(\Phi,\C,\C)$.  Moreover, if $\Phi$ is h-split, $B(\Phi,\C,\C)$ and $B^\Si(\Phi,\C,\C)$ are both h-split. 
\end{prop}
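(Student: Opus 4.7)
My plan is to equip $B(\Phi,\C,\C(-,-))$ with a shuffle product. Given $x = a\ot b_1\ot\cdots\ot b_p\ot c \in B_p(\Phi,\C,\C(-,n))$ with intermediate objects $m_0,\dots,m_p$, and $x' = a'\ot b'_1\ot\cdots\ot b'_q\ot c' \in B_q(\Phi,\C,\C(-,m))$ with intermediate objects $m'_0,\dots,m'_q$, set
$$x\cdot x' = \sum_{\sigma\in\mathrm{Sh}(p,q)} \epsilon(\sigma)\,(a\ot a')\ot \tilde b_1^\sigma\ot \cdots\ot \tilde b_{p+q}^\sigma\ot (c\ot c'),$$
where each $\tilde b_i^\sigma$ is the monoidal product in $\C$ of some $b_j$ or some $b'_k$ with an identity morphism in the complementary factor, with $\sigma$ determining the choice, arranged so that the resulting string is composable with intermediate objects of the form $m_j+m'_k$; the leading factor $a\ot a'$ is viewed in $\Phi(m_0+m'_0)$ via the monoidality of $\Phi$, and $\epsilon(\sigma)$ is the usual shuffle sign.

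Next I verify that this defines a natural, strictly associative, monoidal structure on the functor $B(\Phi,\C,\C):\C\to\Comp$. This is a standard Eilenberg--Zilber-type calculation: the shuffle signs interact with the alternating simplicial differential exactly so as to give a chain map, the internal differentials pass through since $\Phi$ and composition in $\C$ are chain maps, and strict associativity follows from the associativity of shuffles together with the associativity of the monoidal structures on $\C$ and $\Phi$. Both the monoidal structure of $\Phi$ and the padding-by-identities construction commute with relabelings by $\Si$, so the product descends to $B^\Si$. For the symmetric monoidal statement on $B^\Si$, the bijection sending a $(p,q)$-shuffle $\sigma$ to the corresponding $(q,p)$-shuffle $\bar\sigma$ transforms the $\sigma$-term of $x\cdot x'$ into the $\bar\sigma$-term of $x'\cdot x$ up to applying the symmetry morphisms $\tau_{m_j,m'_k}\in\Si$ at each intermediate object and the braiding on the leading factor; quotienting by $\Si$ in $B^\Si$ absorbs these braidings, and once the shuffle sign from $\sigma\leftrightarrow\bar\sigma$ is matched with the Koszul sign of the chain complex symmetry on $\Phi(m_0)\ot\Phi(m_0')$, one obtains $x\cdot x'=x'\cdot x$ strictly in $B^\Si$. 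On $B$ itself only graded commutativity up to homotopy holds, which is why the symmetric statement is restricted to $B^\Si$.

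For the h-split claim, I use Proposition \ref{BCC}, which provides natural quasi-isomorphisms $\al: B(\Phi,\C,\C)\arsim\Phi$ and $\al^\Si:B^\Si(\Phi,\C,\C)\arsim\Phi$. By construction $\al$ vanishes outside simplicial degree $0$ and is the action map in degree $0$, so the square
$$\xymatrix{B(\Phi,\C,\C(-,n))\ot B(\Phi,\C,\C(-,m))\ar[r]\ar[d]_{\al\ot\al} & B(\Phi,\C,\C(-,n+m))\ar[d]^{\al}\\
\Phi(n)\ot\Phi(m)\ar[r] & \Phi(n+m)}$$
commutes. If $\Phi$ is h-split, the bottom map is a quasi-isomorphism; since the vertical maps are quasi-isomorphisms of chain complexes of free abelian groups, two-out-of-three then forces the top map to be one too, proving that $B(\Phi,\C,\C)$ is h-split. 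The same argument applies to $B^\Si$. The main obstacle in this proof is the sign and symmetry bookkeeping in the second paragraph, particularly the verification that the braidings absorbed by the $\Si$-quotient combine with the shuffle signs and Koszul signs to give strict symmetry in $B^\Si$ rather than merely symmetry up to homotopy.
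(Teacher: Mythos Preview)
Your proof is correct and follows essentially the same route as the paper: the paper writes the monoidal structure only at matching simplicial levels, sending $(a\ot f_1\ot\dots\ot f_{p+1})\ot(a'\ot f'_1\ot\dots\ot f'_{p+1})$ to $(a\boxplus a')\ot(f_1\boxtimes f'_1)\ot\dots\ot(f_{p+1}\boxtimes f'_{p+1})$, and your shuffle formula is the standard Eilenberg--Zilber unfolding of this to the total complex. Your symmetry argument for $B^\Si$ (absorbing the intermediate braidings $\tau_{m_j,m'_k}$ into the $\Si$-quotient) and your h-split argument via the commuting square with $\al$ match the paper's exactly; one minor remark is that invoking freeness is unnecessary, since Proposition~\ref{BCC} makes $\al$ a chain homotopy equivalence, so $\al\ot\al$ is automatically a quasi-isomorphism.
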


\begin{proof}
The monoidal structure of $B^{(\Si)}(\Phi,\C,\C)$ comes directly from that of $\Phi$ and $\C$, taking 
$(a\ot f_1\ot \dots\ot f_{p+1})\ot(a'\ot f'_1\ot \dots\ot f'_{p+1})$ to 
$(a\boxplus a')\ot (f_1\boxtimes f'_1)\ot \dots\ot (f_{p+1}\boxtimes f'_{p+1})$, where $\boxplus$ denotes the monoidal structure of
$\Phi$ and $\boxtimes$ that of $\C$. 

We want to check that  $B^\Si(\Phi,\C,\C)$ is in fact symmetric monoidal, i.e. that the diagram 
$$\xymatrix{B^\Si(\Phi,\C,\C(-,n))\ot B^\Si(\Phi,\C,\C(-,m))\ar[r]\ar[d]_{\tau_\ot} 
  & B^\Si(\Phi,\C,\C(-,n+m))\ar[d]^{\tau_\C}\\
B^\Si(\Phi,\C,\C(-,m)) \ot B^\Si(\Phi,\C,\C(-,n))\ar[r]& B^\Si(\Phi,\C,\C(-,m+n))}$$
commutes, where $\tau_\ot$ denotes the symmetry in the category of chain complexes and $\tau_\C$ the symmetry of $\C$. 
This means that we need 

$(a'\boxplus a)\ot_\Si (f_1'\boxtimes f_1)\ot_\Si \dots\ot_\Si (f'_{p+1}\boxtimes f_{p+1})$ 
equal to 

$(a\boxplus a')\ot_\Si (f_1\boxtimes f_1')\ot_\Si \dots\ot_\Si (f_p\boxtimes f_p')\ot_\Si ((f_{p+1}\boxtimes f'_{p+1})\circ\tau_\C)$. \\
This holds because $(f_i\boxtimes f'_i)\circ\tau_\C=\tau_\C\circ(f'_i\boxtimes f_i)$ in $\C$ and  
 $\Phi(\tau_\C)(a\boxplus a')=a'\boxplus a$ as $\Phi$ is symmetric monoidal.

The fact that $\Phi$ is h-split implies  $B(\Phi,\C,\C)$ and $B^\Si(\Phi,\C,\C)$ are h-split; this follows from the commutativity of the following
diagram: 
$$\xymatrix{B(\Phi,\C,\C)(n)\ot B(\Phi,\C,\C)(m) \ar[r]\ar[d]^\al_\simeq & B(\Phi,\C,\C)(n+ m) \ar[d]^\al_\simeq \\
\Phi(n)\otimes \Phi(m) \ar[r]^\simeq  & \Phi(n+ m).}$$
\end{proof}

Note that in the above proposition, strengthening the assumption on $\Phi$ to be split still only yields $B^{(\Si)}(\Phi,\C,\C)$ h-split.

\section{Hochschild complex operator}\label{HComp}

Let $\e$ be a symmetric monoidal dg-category
which admits a symmetric monoidal functor $i:\Ai\to \e$, for $\Ai$ the category defined in \ref{Aisec}. 
For simplicity, and because all our examples are of this sort, we assume that $i$ is the identity on objects, i.e. that $\e$ is a prop with $\Ai$--multiplication. 
Recall from \ref{Aialg} that $\e$--algebras, i.e.~symmetric monoidal functors $\e\to\Comp$, have an underlying $\Ai$--algebra structure by
precomposition with $i$, and hence have a well-defined Hochschild complex. 
We define in this section a generalization of the Hochschild complex in the form of an 
operator $C$  on dg-functors $\Phi:\e\to \Comp$ with the property that, if $\Phi$ is symmetric monoidal,  the value of 
$C(\Phi)$ at 0 is the usual Hochschild complex of the underlying $\Ai$--algebra. The value of $C(\Phi)$ at $n$ can more generally be identified with
the higher Hochschild homology \`a la Pirashvili \cite{Pir00} associated to the simplicial set which is a union of a circle and $n$ points. 

 In \ref{Hprop} we study the  basic properties of our Hochschild complex operator  
and in \ref{Hact} we prove our main theorem, Theorem~\ref{action}, which
gives a way of constructing actions on Hochschild complexes.

\medskip 

Recall from \ref{annulisec} the functor $\LL:\Ai^{op}\to \Comp$ defined by $$\LL(k)=\bigoplus_{n\ge 1}\Ai(k,n)\ot L_n$$ 
for $L_n=\lgl l_n\rgl$. 

Let $\e$ be a monoidal dg-category. 
Given a functor $\Phi:\e\to\Comp$ and an object $m\in \e$, we can define a new functor $$\Phi(-+m):\e\to\Comp$$ by setting 
$\Phi(-+m)(n)=\Phi(n+m)$ and $\Phi(-+m)(f)=\Phi(f+id_m)$. Note that for any morphism $g\in \e(m,m')$, $\Phi(id+g)$ induces a natural
transformation $\Phi(-+m)\to \Phi(-+m')$. 

Given functors $F:\C\to\Comp$ and $G:\C^{op}\to\Comp$, we denote by 
$$F\ot_{\C}G=\bigoplus_{k \in Obj(\C)} F(k)\ot G(k)/\sim$$
 the tensor product of $F$ and $G$, where the equivalence relation is given by $f(x)\ot y\sim x\ot f(y)$ for any
$x\in F(k)$, $y\in G(l)$ and $f\in \C(k,l)$. This is a chain complex with differential $d=d_F+d_G$ (with the usual Koszul sign convention).

\begin{Def}[Hochschild complex]\label{CDef} Let  $(\e,i)$ be a prop with $\Ai$--multiplication.  
For a functor $\Phi:\e\to \Comp$, define its {\em Hochschild complex} as a functor $C(\Phi):\e\to\Comp$ given on objects by 
$$C(\Phi)(m):=i^*\Phi(-+m)\ot_{\Ai}\LL$$
and on morphisms by  
$$C_*(\Phi)(f):=i^*\Phi(id+f)\ot id.$$
\end{Def}

Note that 
$\LL$ is free as a functor to graded vector spaces, so as a graded vector space, 
$$\begin{array}{rcccl}C(\Phi)(m) &\cong& \bigoplus_{n\ge 1} \Phi(n+m)\ot L_n &\cong&  \bigoplus_{n\ge 1} \Phi(n+m)[n-1]
\end{array}$$
where the second isomorphism comes from the fact that each $L_n$ is generated by a single element in degree $n-1$. 
The differential is given, for $x\in\Phi(n+m)$, by 
$$d(x\ot l_n)=d_\Phi x\ot l_n + (-1)^{|x|}\sum_{k=1}^{n-1} \Phi(i(f_{n,k})+id_m)(x)\ot l_k$$
with $f_{n,k}$  the terms of the differential of $L_n$ as defined in \ref{annulisec}.  

\smallskip

The construction is natural in $\Phi$ and $\e$ in the following sense: Given a factorization of $i$ as 
$\Ai\sta{i'}{\to}\e'\sta{j}{\to}\e$ and a
functor $\Phi:\e\to \Comp$, we have $C(j^*\Phi)\cong j^*C(\Phi)$, and given two functors $\Phi,\Psi:\e\to\Comp$ and a natural
transformation $\eta:\Phi\to\Psi$, we get a natural transformation $C(\eta):C(\Phi)\to C(\Psi)$.

\begin{rem}{\rm 
The operator $C$ generalizes the usual Hochschild complex of $A_\infty$--algebras in the sense that 
for $\Phi:\Ai\to\Comp$ symmetric monoidal, $C_*(\Phi)(0)$
is the usual Hochschild complex of the $A_\infty$--algebra $\Phi(1)$ as in e.g.~\cite[7.2.4]{KS06}. 
In the case of a strict graded algebra, 
taking as generator of $L_n$ the graph $l_n$ of Figure~\ref{graphsex} with orientation $v\w h_1\w\dots\w h_n$ and
using the sign convention for the product given in Figure~\ref{prodsign}, our differential is explicitly given by the following formula: 
for a $n$--chain $a_0\ot \dots\ot a_n$ of the Hochschild complex of an algebra $A$, we have \\
$\begin{array}{rl}d(a_0\ot \dots\ot a_n)=&\sum_{i=0}^n (-1)^{a_0+\dots+a_{i-1}}a_0\ot\dots \ot da_i\ot\dots \ot a_n\\
& \\
 + & (-1)^{a_0+\dots+a_n}\sum_{i=0}^{n-1} (-1)^{i+1}a_0\ot\dots\ot a_ia_{i+1}\ot\dots\ot a_n\\
&\\
+ & (-1)^{n+1+(a_n+1)(a_0+\dots+a_{n-1})+a_n}a_na_0\ot a_1\ot\dots\ot a_{n-1},
\end{array}$\\
where $a_i$ in a superscript denotes the degree of $a_i$.  
}\end{rem}

Note though that we have defined the Hochschild complex for any functor $\Phi:\e\to\Comp$, not just for monoidal ones. In particular, we will apply the
Hochschild constructions to the (in general non-monoidal) representable functors $\Phi(m)=\e(m,-)$, which can be thought of as 
``generalized free $\e$--algebras''. Also, even for $\Phi$ monoidal, $C(\Phi)$ will in general not be monoidal, but we can nevertheless iterate the
construction and talk about $C(C(\Phi))=C^2(\Phi)$, $C^3(\Phi)$, etc.

\begin{Def}[Reduced Hochschild complex]\label{CbDef}
Let $(\e,i)$ be a prop with unital $\Ai$--multiplication and $\Phi:\e\to\Comp$ a functor. 
Define the {\em reduced Hochschild complex of $\Phi$} as the quotient functor $\bC(\Phi)=C(\Phi)/U:\e\to\Comp$ given on object by 
$$\overline{C}(\Phi)(m)\ =\ \bigoplus_{n\ge 1} \Phi(n+m)/_{U_{n}}\ot L_{n}$$ 
where $U_{n}=\sum_{i=2}^n\Img(\Phi(i(u_i)+id_m))\subset \Phi(n+m)$ 
with $u_i=1\ot\dots\ot u\ot \dots\ot 1$ in $\Ai^+(n-1,n)$  the morphism that inserts a unit at the $i$th position.
\end{Def}

As the quotient does not affect the variable part of $C(\Phi)$, it is clear that $\bC(\Phi)$ is still defines a functor $\e\to\Comp$. On the other
hand, we need to check that the differential is well-defined on the quotient, which is done in the following lemma:

\begin{lem}\label{reddiff}
The differential of $C(\Phi)(m)$ induces a  well-defined differential on $\bC(\Phi)(m)$ for each $m$.
\end{lem}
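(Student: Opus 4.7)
The plan is to show that the submodule $U := \bigoplus_{n\ge 1} (U_n \otimes L_n) \subseteq C(\Phi)(m)$ is preserved by the full differential $d$; the quotient differential on $\bar{C}(\Phi)(m)$ is then automatically well-defined (and squares to zero since $d^2=0$ on the unreduced complex). Recall that, on a pure tensor $x \otimes l_n$, the differential splits as
\[
d(x \otimes l_n) \;=\; d_\Phi x \otimes l_n \;+\; (-1)^{|x|}\sum_{k<n} \Phi\bigl(i(f_{n,k}) + \mathrm{id}_m\bigr)(x) \otimes l_k,
\]
so I would treat the internal part $d_\Phi$ and the combinatorial part (the $f_{n,k}$-terms) separately.

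For the internal part I would first note that each unit-insertion $u_j \in \Ai^+(n-1,n)$ is of degree $0$ with zero differential; consequently $\Phi(i(u_j) + \mathrm{id}_m)$ is a chain map, hence commutes with $d_\Phi$. If $x = \Phi(i(u_j) + \mathrm{id}_m)(y) \in U_n$, then $d_\Phi x = \Phi(i(u_j) + \mathrm{id}_m)(d_\Phi y) \in U_n$, so $d_\Phi$ preserves each $U_n$.

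For the combinatorial part, using that $\Phi$ is a dg-functor, rewrite
\[
\Phi\bigl(i(f_{n,k}) + \mathrm{id}_m\bigr)(x) \;=\; \Phi\bigl(i(f_{n,k} \circ u_j) + \mathrm{id}_m\bigr)(y),
\]
which reduces the analysis to the composition $f_{n,k} \circ u_j$ in $\Ai^+(n-1,k)$. Geometrically, each summand of $f_{n,k}$ comes from a blow-up of the white vertex of $l_n$ and is a forest partitioning the cyclic sequence of $n$ inputs into $k$ consecutive blocks, each block of size $r$ multiplied by a corolla $\mu_r$. Plugging a unit into the $j$-th leaf for $j\ge 2$ places it inside one such block; applying the $\mu_2$-unit relation $\mu_2(u,-) = \mathrm{id} = \mu_2(-,u)$, the result can be rewritten as (sum of) morphisms of the form $g \circ u_{j'}$ with $j'\ge 2$, so the image lies in $U_k$.

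The main obstacle is the bookkeeping for the blow-ups in which the unit lands inside a higher-arity corolla $\mu_r$ with $r\ge 3$, since these are not directly simplified by the $\mu_2$-unit axiom. The plan is to show that such contributions cancel in pairs: each blow-up peeling off a block containing the $j$-th leaf is matched with a cyclically adjacent blow-up of opposite orientation where the unit ends up in the paired position, so that their images under $\Phi$ cancel in the sum over $k$. Once this pairing is established, the combinatorial term also preserves $U$ up to these cancellations, and one concludes by linearity that $d$ descends to a differential on $\bar{C}(\Phi)(m)$.
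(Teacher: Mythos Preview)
Your treatment of the internal differential $d_\Phi$ is fine and matches the paper. The combinatorial part, however, has the two nontrivial cases swapped.

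First, in the paper's category $\Ai^+$ the unit is \emph{strict}: by the gluing rule for the singly-labeled leaf, composing $u$ into the $j$th input of $m_r$ gives $0$ whenever $r\ge 3$ (the vertex has valence $\ge 4$), and gives the identity when $r=2$. So your ``main obstacle'' --- the terms where the unit lands inside a corolla $m_r$ with $r\ge 3$ --- is not an obstacle at all: each such summand of $f_{n,k}\circ u_j$ is literally zero in $\Ai^+$, and no pairwise cancellation is needed.

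Second, the case you dismissed with the $\mu_2$--unit relation is precisely the one that does not land in $U_k$. When the block has size $r=2$ and contains the position $j$, the composite $m_2^{j-1}\circ u_j$ (resp.\ $m_2^{j}\circ u_j$) is the \emph{identity} on $n-1$ inputs, so applied to $y$ it returns $y\otimes l_{n-1}$, which is generally not in $U_{n-1}\otimes L_{n-1}$. Your assertion that ``the result can be rewritten as $g\circ u_{j'}$'' fails exactly here. What saves the day is that these two identity terms occur with opposite signs in $d_L(l_n)$ and cancel; this is the actual cancellation argument, and it happens for $r=2$, not $r\ge 3$.

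In summary: the case split ``$j$ outside the block / $j$ inside the block'' is right, and the outside case does give something of the form $u_{j'}\oplus m_r$ with $j'\ge 2$ as you indicate. Inside the block, use $m_r\circ u_j=0$ for $r\ge 3$, and for $r=2$ exhibit the sign cancellation between the two adjacent $m_2$--terms. That closes the argument.
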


\begin{proof}
Let $U_n\le \Phi(n)$ be as in Definition~\ref{CbDef}. 
We first note that $U_n$ is mapped to itself by $d_\Phi$ because the structure map $c_\Phi$ of $\Phi$ 
is by chain maps and $d(u_i)=0$.   
We need to see that the same holds for the Hochschild part of the differential. This follows from the commutativity of the following diagram
(written in the case $m=0$ for readability) 
$$\xymatrix{\Phi(n\!-\!1)\ot\lgl u_i\rgl\ot  L_{n}\ar[d]^-{d_L} \ar[r]^-{c_\Phi}&  \Phi(n)\ot L_{n}\ar[d]^-{d_L}\\
\displaystyle{\bigoplus_{\begin{subarray}{c}2\le r\le n\\ 1\le j\le n\end{subarray}}} 
   \Phi(n\!-\!1)\ot\lgl u_i\rgl\ot\lgl m^j_r\rgl\ot L_{n+1-r}\ar[d]^{c_{\Ai}} \ar[r]^-{c_\Phi}&
\displaystyle{\bigoplus_{\begin{subarray}{c}2\le r\le n\\ 1\le j\le n\end{subarray}}} 
   \Phi(n)\ot\lgl m^j_r\rgl\ot L_{n+1-r}\ar[d]^{c_\Phi} \\ 
\displaystyle{\bigoplus_{r,j}} \Phi(n\!-\!1)\ot \lgl u_i\oplus m_r^j\rgl\ot L_{n+1-r} \ar[r]^-{c_\Phi}&
   \bigoplus_{k\ge 1}\Phi(k)\ot L_k}$$
where $m_r^j=1\oplus\dots\oplus m_r\oplus \dots\oplus 1$ denotes the multiplication
$m_r$ of the entries $j,\dots,j+r-1$ (mod $n$). The target of the map $c_{\Ai}$ is justified as follows. 
There are two cases when composing $u_i$ and $m_r^j$: either $i\notin\{j,\dots,j+r-1\}$ so that the composition
$m^j_r\circ u_i$ is of the form $u_i\oplus m^j_r$. Otherwise, the composition $m^j_r\circ u_i$ is
the identity map when $r=2$ and $0$ when $r>2$. 
In the case $r=2$, the term $m^{i-1}_2\circ u_i$ cancels with $m^i_2\circ u_i$. (The sign comes from the differential in $L$).
\end{proof}

Let $\e,\F$ be dg-categories and suppose that $\Phi:\e\to \Comp$ in fact extends to a bifunctor 
$\Phi:\F^{op}\x \e\to \Comp$. In this case, we also call $\Phi$ an  {\em $(\F^{op},\e)$--bimodule}\footnote{Here, to correctly work out the signs in the differential, we take the structure maps of the bimodule to be in the form $\F(m_1, m_2) \times \Phi(m_2, n_1) \times \e(n_1, n_2) \to \Phi(m_1, n_2)$ and apply the usual sign convention.}.

\begin{prop}\label{bimodule} Let $(\e,i)$ be a prop with (unital) $\Ai$--multiplication and 
suppose $\Phi$ is an $(\F^{op},\e)$--bimodule. Then the Hochschild complexes 
$C(\Phi(a,-))$ and $\bC(\Phi(a,-))$ built using the $\e$--structure of $\Phi$ pointwise on objects $a$ of $\F$ assemble again to $(\F^{op},\e)$--bimodules. 
\end{prop}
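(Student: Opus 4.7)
The plan is to verify three things: functoriality of the assignment $a \mapsto C(\Phi(a,-))$ in the $\F^{op}$ variable, compatibility of this new action with the already-defined $\e$-action, and descent of everything to the reduced quotient $\bC$. The key tool throughout is the naturality of $C$ in its functor argument noted just after Definition~\ref{CDef}: any natural transformation $\eta:\Psi_1\Rightarrow\Psi_2$ of dg-functors $\e\to\Comp$ induces $C(\eta):C(\Psi_1)\Rightarrow C(\Psi_2)$, applied termwise on the left tensor factor of $\Psi(-+m)\otimes_{\Ai}\LL$.

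First I would observe that each $g\in\F(a',a)$ gives, via the bifunctor $\Phi$, a natural transformation $\Phi(g,-):\Phi(a,-)\Rightarrow\Phi(a',-)$ of dg-functors $\e\to\Comp$, hence by the naturality of $C$ a map $C(\Phi(g,-)):C(\Phi(a,-))\Rightarrow C(\Phi(a',-))$. Functoriality of the assignment in $\F^{op}$ is then immediate from the bifunctoriality of $\Phi$. Next I would check that this $\F^{op}$-action together with the $\e$-action assemble into an honest bimodule $\F^{op}\times\e\to\Comp$. On each summand $\Phi(a,n+m)\otimes L_n$, the $\F^{op}$-action is $\Phi(g,\mathrm{id})$ on the left factor and the $\e$-action is $\Phi(\mathrm{id},\mathrm{id}+f)$; these commute by the very definition of a bifunctor. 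That $\Phi(g,\mathrm{id})$ is a chain map and also commutes with the Hochschild ``annular'' part of the differential (built from $\Ai\subset\e$-morphisms acting on the left factor) again follows from $\Phi(g,-)$ being a natural transformation of dg-$\e$-functors.

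The only point requiring mild care is the reduced version. I need the $\F^{op}$-action to preserve the subspaces $U_n\subset\Phi(a,n+m)$ generated by unit insertions $\Phi(a,i(u_i)+\mathrm{id}_m)$. This is once more immediate from bifunctoriality: the equality $\Phi(g,\mathrm{id})\circ\Phi(a,i(u_i)+\mathrm{id}_m) = \Phi(a',i(u_i)+\mathrm{id}_m)\circ\Phi(g,\mathrm{id})$ shows that $\Phi(g,\mathrm{id})$ carries $U_n\subset\Phi(a,n+m)$ into $U_n\subset\Phi(a',n+m)$ and so descends to the quotient defining $\bC$. With this, all the previous verifications transfer to $\bC$ without change. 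The whole argument is essentially formal bookkeeping; the main obstacle, such as it is, is keeping the four-way indexing (by the $\F$-object, the $\e$-object $m$, the inner Hochschild index $n$, and the tensor coordinate in $\LL$) straight and confirming that every commutation really is a consequence of the bifunctoriality of $\Phi$.
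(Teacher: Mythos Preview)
Your proposal is correct and follows essentially the same approach as the paper: both arguments reduce to the bifunctoriality of $\Phi$ and the fact that the $\F^{op}$--action, being given by natural transformations of $\e$--functors, commutes with the Hochschild part of the differential. The paper's proof is a one-liner that simply records the explicit formula $C(\Phi)(g,f)=(-1)^{(n-1)|f|}\Phi(g,\mathrm{id}_n+f)$ on the summand $\Phi(a_2,n+m_1)\otimes L_n$ (the Koszul sign coming from moving $f$ past $l_n$), whereas you package the same content more abstractly via the naturality of $C$; your treatment of the reduced case is in fact more explicit than the paper's, which leaves that step implicit.
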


\begin{proof}
Given $f:m_1\to m_2$ in $\e$ and $g:a_2\to a_1$ in $\F^{op}$, $C(\Phi)(g,f)$ on the summand $\Phi(a_2,n+m_1)\ot L_n$ is the map 
$(-1)^{(n-1)|f|}(g,id_n+f)$. This is well-defined as the Hochschild part of the differential commutes with such maps. 
\end{proof}

\begin{ex}\label{bimodex}{\rm 
The example we are interested in is the $(\e^{op},\e)$--bimodule $\e$. By the proposition, its Hochschild and iterated
Hochschild complexes $C(\e),C^n(\e)$, and reduced versions when relevant, are again $(\e^{op},\e)$--bimodules. 
Given any $\Phi:\e\to\Comp$, this allows to consider the double bar construction $B(\Phi,\e,C^n\e)$ (as in Section~\ref{Bar}), which in fact identifies with   
$C^n(B(\Phi,\e,\e))$ as both have value at $m$ given by 
$$\bigoplus_{\begin{subarray}{r}p\ge 0, \ n\ge 1 \\ m_0,\dots,m_p\ge 0\end{subarray}}\Phi(m_0)\otimes \e(m_0,m_1)\otimes \dots\ot\e(m_{p},n+m)\otimes L_n$$
(and similarly for the reduced constructions). 
}\end{ex}

\subsection{Properties of the Hochschild operator}\label{Hprop}

We prove in this section that the Hochschild complex operator is homotopy invariant and we describe its behavior under
iteration.  Throughout the section, we assume that $(\e,i)$ is a prop with $\Ai$--multiplication when we consider the Hochschild complex $C$, and
that $(\e,i)$ is a prop with unital $\Ai$--multiplication when we consider its reduced version $\bC$.

\medskip

Recall that by a quasi-isomorphism of functors $\Phi\arsim \Phi':\e\to \Comp$, we mean a natural transformation by
quasi-isomorphisms $\Phi(m)\arsim\Phi'(m)$.

\begin{prop}\label{qi}
Let $\Phi,\Phi':\e\to \Comp$. 
A quasi-isomorphism of functors $\Phi\arsim \Phi'$  induces quasi-isomorphisms of functors 
$C_*(\Phi)\arsim C_*(\Phi')$ and 
  $\overline{C}_*(\Phi)\arsim \overline{C}_*(\Phi')$. 
\end{prop}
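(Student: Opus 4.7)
The plan is to exploit the tensor-length filtration on $C(\Phi)(m)$ coming from the direct sum decomposition
$$C(\Phi)(m)\ \cong\ \bigoplus_{n\ge 1}\Phi(n+m)\ot L_n.$$
Define $F_p C(\Phi)(m)=\bigoplus_{1\le n\le p}\Phi(n+m)\ot L_n$. From the explicit formula
$$d(x\ot l_n)=d_\Phi x\ot l_n+(-1)^{|x|}\sum_{k<n}\Phi(i(f_{n,k})+\mathrm{id}_m)(x)\ot l_k,$$
the differential preserves $F_p$, so $F_\bu$ is a filtration by subcomplexes and $C(\Phi)(m)=\colim_p F_p C(\Phi)(m)$. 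The associated graded $F_p/F_{p-1}\cong\Phi(p+m)\ot L_p$ carries only the internal differential $d_\Phi\ot\mathrm{id}$.

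Given a natural transformation $\eta\colon\Phi\arsim\Phi'$ by quasi-isomorphisms, the induced map $C(\eta)\colon C(\Phi)\to C(\Phi')$ sends $x\ot l_n$ to $\eta_{n+m}(x)\ot l_n$; in particular it preserves the filtration $F_\bu$, and on the associated graded at level $p$ it is $\eta_{p+m}\ot\mathrm{id}_{L_p}$. Since $\eta_{p+m}$ is a quasi-isomorphism of chain complexes and $L_p$ is a free graded $\Z$-module on a single generator, the induced map on $F_p/F_{p-1}$ is a quasi-isomorphism. Proceeding by induction on $p$, using the short exact sequences
$$0\rar F_{p-1}C(\Phi)(m)\rar F_p C(\Phi)(m)\rar F_p/F_{p-1}\rar 0$$
and the $5$-lemma applied to the two long exact sequences, $F_p C(\eta)_m$ is a quasi-isomorphism for every $p$. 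Since homology commutes with filtered colimits of chain complexes, the induced map
$$H_*(C(\Phi)(m))=\colim_p H_*(F_p C(\Phi)(m))\rar \colim_p H_*(F_p C(\Phi')(m))=H_*(C(\Phi')(m))$$
is an isomorphism. Naturality in $m$ follows since the filtration is preserved by the functoriality maps $C(\Phi)(f)$, which act by $\Phi(\mathrm{id}+f)\ot\mathrm{id}$ summand-wise.

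For the reduced version, observe that the subcomplex $U_n\subset\Phi(n+m)$ from Definition~\ref{CbDef} is compatible with the filtration: $\bC(\Phi)(m)$ inherits the filtration $F_p\bC(\Phi)(m)=\bigoplus_{n\le p}\Phi(n+m)/U_n\ot L_n$, with associated graded $\Phi(p+m)/U_p\ot L_p$. Since $\eta\colon\Phi\arsim\Phi'$ is a natural transformation of functors on $\e$ (hence in particular commutes with the unit-insertion maps $\Phi(i(u_i)+\mathrm{id}_m)$), it sends $U_n$ to $U'_n$, and thus descends to $\bC(\eta)$. A quasi-iso on each filtration quotient then follows from the five-lemma applied to
$$0\rar U_p\ot L_p\rar\Phi(p+m)\ot L_p\rar \Phi(p+m)/U_p\ot L_p\rar 0$$
(and its $\Phi'$-analogue), using that $\eta_{p+m}$ sends $U_p$ quasi-isomorphically onto $U'_p$, which is automatic because $U_p=\sum_i\Img\Phi(i(u_i)+\mathrm{id}_m)$ is the image of $\Phi((p-1)+m)^{\oplus(p-1)}$ and $\eta$ is natural. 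The same induction on $p$ together with the filtered-colimit argument then gives that $\bC(\eta)$ is a pointwise quasi-isomorphism.

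The only delicate point is the naturality of the unit-insertion subcomplex $U_n$ under $\eta$, i.e.\ showing that $\eta$ restricts to a quasi-iso $U_n\arsim U'_n$; this is where we use that $\eta$ is a natural transformation in $\e$ (so commutes with $\Phi(i(u_i)+\mathrm{id}_m))$, reducing the claim to the unreduced quasi-isomorphism already established in lower filtration degree. Everything else is a standard five-lemma plus colimit argument.
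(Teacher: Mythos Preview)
Your unreduced argument is correct and is essentially the paper's proof: both use the filtration by the $L$-index, and your five-lemma-plus-colimit is the elementary spelling-out of the spectral sequence comparison the paper invokes.

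The reduced case, however, has a genuine gap. You assert that $\eta$ restricts to a quasi-isomorphism $U_p\arsim U'_p$, and justify this by saying $U_p$ is the image of $\Phi((p-1)+m)^{\oplus(p-1)}$ under the sum of unit insertions, with $\eta$ natural. But a natural quasi-isomorphism on the sources of a map does not in general induce a quasi-isomorphism on images: the image of a chain map is not a homotopy-invariant construction. Concretely, nothing you have written rules out that some $u_i$ lands in boundaries of $\Phi(p+m)$ but not in boundaries of $\Phi'(p+m)$ (or vice versa), which would change the homology of $U_p$ versus $U'_p$. Your last paragraph acknowledges this as ``the only delicate point'' but then waves it away by ``reducing to the unreduced quasi-isomorphism in lower filtration degree'', which is not a valid reduction for the reason just given.

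The missing ingredient is that each unit insertion $u_j$ is \emph{split} injective, with retraction given by the multiplication $m_2^j$. The paper packages this into Lemma~\ref{quot}: one proves $\Phi(n)/U_J\arsim\Phi'(n)/U'_J$ directly by induction on $|J|$, using the split short exact sequence
$$0\rar \Phi(n-1)/U_{J\setminus\{j_s\}}\sta{u_{j_s}}{\rar}\Phi(n)/U_{J\setminus\{j_s\}}\rar\Phi(n)/U_J\rar 0.$$
The splitting (by $m_2^{j_s}$) is what guarantees exactness on the left and lets the five-lemma go through. If you want to salvage your approach, this is exactly the argument you need to insert for the associated-graded step; everything else then proceeds as in your unreduced case.
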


For the reduced part of the proposition, we need the following lemma. 

\begin{lem}\label{quot}
Suppose $\Phi\arsim \Phi':\e\to \Comp$ are quasi-isomorphic functors. 
For any $J\subset\{1,\dots,n\}$, let 
$U_J=\sum_{j\in J} \operatorname{Im}\big(\Phi(i(u_j))\big)\subset \Phi(n)$, 
and similarly for $\Phi'$. 
Then $$\Phi(n)/U_J\arsim\Phi'(n)/U_J.$$ 
If $\Phi\cong\Phi'$, these maps are also isomorphisms.  
\end{lem}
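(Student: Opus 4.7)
By naturality of $\eta\colon\Phi\to\Phi'$, for each $j$ the square with horizontals $\Phi(i(u_j)),\Phi'(i(u_j))$ and verticals $\eta_{n-1},\eta_n$ commutes, so $\eta_n$ carries $U_J^\Phi$ into $U_J^{\Phi'}$ and induces a chain map $\bar\eta\colon\Phi(n)/U_J^\Phi\to\Phi'(n)/U_J^{\Phi'}$. The morphism of short exact sequences $0\to U_J\to\Phi(n)\to\Phi(n)/U_J\to 0$ (for both $\Phi$ and $\Phi'$), together with the five lemma and the hypothesis that $\eta_n$ is a quasi-isomorphism, reduces the quasi-isomorphism statement to showing that the restricted map $\eta_n|\colon U_J^\Phi\to U_J^{\Phi'}$ is itself a quasi-isomorphism.

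The crucial structural input is that $(\e,i)$ is a PROP with unital $\Ai$-multiplication. The relation $m_2\circ(u\otimes\operatorname{id})=\operatorname{id}$ in $\Ai^+$ yields a natural chain-level retraction $\Phi(i(m_2^{(j)}))\circ\Phi(i(u_j))=\operatorname{id}_{\Phi(n-1)}$, where $m_2^{(j)}\in\Ai(n,n-1)$ denotes the multiplication of positions $j,j+1$. Hence each $\Phi(i(u_j))$ is a split monomorphism, $U_{\{j\}}^\Phi$ is a natural direct summand of $\Phi(n)$ naturally isomorphic as a chain complex to $\Phi(n-1)$, and one obtains a natural splitting $\Phi(n)=U_{\{j\}}^\Phi\oplus K_j^\Phi$ with $K_j^\Phi:=\ker\Phi(i(m_2^{(j)}))$. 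The case $|J|=0$ is trivial; the base case $|J|=1$ follows immediately, since the natural isomorphism $U_{\{j\}}^\Phi\cong\Phi(n-1)$ combined with $\Phi(n-1)\arsim\Phi'(n-1)$ gives $U_{\{j\}}^\Phi\arsim U_{\{j\}}^{\Phi'}$.

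For the inductive step on $|J|$, I would write $J=J'\sqcup\{j_0\}$ and use the splitting to identify $\Phi(n)/U_J\cong K_{j_0}/\pi_{j_0}(U_{J'})$, where $\pi_{j_0}$ is projection onto $K_{j_0}$. Using the simplicial-type identities in $\Ai^+$ -- namely $u_ju_k=u_ku_{j-1}$ for $k<j$, together with the commutation relations $m_2^{(j)}u_k=u_{k'}m_2^{(j')}$ valid when $k$ is not adjacent to $\{j,j+1\}$ -- a case analysis on the position of $k\in J'$ relative to $j_0$ expresses $\pi_{j_0}\circ\Phi(i(u_k))$ either as $\Phi(i(u_{k'}))\circ\pi_{j'}^{(n-1)}$ (for $k$ away from $\{j_0-1,j_0,j_0+1\}$) or by an analogous natural formula in the boundary cases. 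In each instance the image $\pi_{j_0}(U_{\{k\}})$ is naturally described in terms of $\Phi$ applied to smaller arguments, so the subcomplex $\pi_{j_0}(U_{J'})\subset K_{j_0}^\Phi$ admits a natural description of the same type as $U_{J'}$ on a smaller chain complex; an inductive hypothesis formulated with sufficient generality to cover these naturally-arising subcomplexes equipped with their induced unit insertions then yields the required quasi-isomorphism. The isomorphism statement is immediate: an isomorphism $\eta\colon\Phi\cong\Phi'$ carries $\Img(\Phi(i(u_j)))$ bijectively onto $\Img(\Phi'(i(u_j)))$, whence $U_J^\Phi\cong U_J^{\Phi'}$ and $\bar\eta$ is an isomorphism.

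The main obstacle in this plan is the combinatorial bookkeeping in the inductive step: the boundary cases $k\in\{j_0\pm 1\}$ do not reduce to a clean $U$-type subcomplex of $K_{j_0}^{(n-1)}$, so either a careful choice of retraction $m_2^{(j)}$ adapted to the given set $J$, or a strengthened inductive hypothesis encompassing the natural subcomplexes of $K_{j_0}$ that appear, is needed to complete the argument.
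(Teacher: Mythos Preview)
Your approach has the right ingredients---induction on $|J|$ and the split monomorphism $\Phi(i(u_j))$ with retraction $\Phi(i(m_2^{(j)}))$---but you have made the combinatorics harder than necessary, and the obstacle you flag at the end is real. Working with $U_J$ itself, or with the kernel $K_{j_0}$ and the projection $\pi_{j_0}(U_{J'})$, forces you to control how the idempotent $u_{j_0}m_2^{(j_0)}$ interacts with each $u_k$ for $k\in J'$, and the adjacent cases $k=j_0\pm 1$ do not behave uniformly. Strengthening the inductive hypothesis to cover the subcomplexes that arise is possible in principle but painful.

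The paper avoids all of this with one simple move: never look at $U_J$ or $K_{j_0}$, only at the quotients, and always peel off the \emph{largest} element of $J$. Writing $J=\{j_1<\dots<j_s\}$, the map $\Phi(i(u_{j_s}))$ descends to a map
\[
\Phi(n-1)/(U_{j_1}+\dots+U_{j_{s-1}})\ \longrightarrow\ \Phi(n)/(U_{j_1}+\dots+U_{j_{s-1}})
\]
because $j_i<j_s$ gives $u_{j_s}u_{j_i}=u_{j_i}u_{j_s-1}$, so $u_{j_s}$ carries each $U_{j_i}$ into $U_{j_i}$. Likewise $m_2^{(j_s)}$ descends and remains a left inverse on the quotient. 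One then has a natural short exact sequence
\[
0\to \Phi(n-1)/U_{J\setminus\{j_s\}}\to \Phi(n)/U_{J\setminus\{j_s\}}\to \Phi(n)/U_J\to 0,
\]
the first two terms are quasi-isomorphisms by induction (on $|J|$, uniformly in $n$), and the five lemma finishes. The choice of the maximal index is exactly what makes the simplicial identities line up without boundary cases; choosing an arbitrary $j_0$ as you do is the source of your difficulty.
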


\begin{proof}
We prove the lemma by induction on the cardinality of $J$, for any $n$, starting with the case $J=\emp$ which is trivial. 

Fix $J=\{j_1\le\dots\le j_s\}\subset \{1,\dots,n\}$ and    
denote by $U_i,U_i'$ the image of $i(u_i)$ in $\Phi(n)$ and $\Phi'(n)$ respectively. 
We want to show that $\Phi(n)/(U_{j_1}+\dots+U_{j_s})\arsim \Phi'(n)/(U'_{j_1}+\dots+U'_{j_s})$. 

There is a short exact sequence 
 $$\xymatrix{\Phi(n\!-\!1)/(U_{j_1}+\dots+U_{j_{s-1}})\ar[r]^-{i(u_{j_s})}  & \Phi(n)/(U_{j_1}+\dots+U_{j_{s-1}}) \ar[d] \\
&  \Phi(n)/(U_{j_1}+\dots+U_{j_s}).}$$
Indeed $u_{j_s}$ is injective on $\Phi(n\!-\!1)/(U_{j_1}+\dots+U_{j_{s-1}})$ with left inverse $i(m_2^{j_s})$ (where
$m_2^{j_s}$ multiplies $j_s$ and $j_s+1$ modulo $n$). 
The result then follows by induction by 
considering the map of short exact sequences induced by $\Phi\to \Phi'$. 
\end{proof}

\begin{proof}[Proof of the Proposition] 
We filter the complexes 
$C_*(\Phi)(m)=\oplus \Phi(k+m)\ot L_k$ and $\overline{C}_*(\Phi)(m)=\oplus \Phi(k+m)/U_k\ot L_k$ by $k$ and consider the resulting
spectral sequence. In both cases the differential is $d_\Phi + d_H$ where $d_H$ decreases the filtration
grading and $d_\Phi$ does not. Hence the $E^1$--terms of the spectral sequences are $E^1_{p,q}=H_p(\Phi(q+1+m))\ot L_{q+1}$ and 
$E^1_{p,q}=H_p(\Phi(q+1+m)/U_{q+1})\ot L_{q+1}$ in the reduced case. A quasi-isomorphism of functors induces a map of spectral sequences
which is an isomorphism on the $E^1$--term by the assumption in the unreduced case and by Lemma~\ref{quot} in the reduced case. 
\end{proof}

Applying Proposition \ref{qi} to the map $\al:B(\Phi,\e,\e)\sta{\simeq}{\to}\Phi$ of Proposition~\ref{BCC}, we get
a quasi-isomorphism $$C(\al): C(B(\Phi,\e,\e))\arsim C(\Phi).$$
The proof of Proposition \ref{BCC} gives a pointwise  homotopy inverse $\beta$ to $\al$ which is not a natural transformation, so we cannot
apply Proposition~\ref{qi} to it. (In fact $C(\beta)$ does not define a chain map.) 
Instead, we construct now an explicit pointwise homotopy inverse $\tilde \beta$ to $C^n(\al)$, for any $n$, as this will be useful later to produce
explicit actions on the Hochschild complex of $\e$--algebras.  

\begin{prop}\label{beta}
For any $n$ and $m$, there is a quasi-isomorphism  of chain complexes 
$$\tilde\beta:C^n(\Phi)(m)\sta{\simeq}{\rar} C^n(B(\Phi,\e,\e))(m)$$
natural both with respect to natural transformations $\Phi\to\Phi'$ and with respect to functors 
$j:\e\to \e'$ with $i'=j\circ i:\Ai\to\e'$. Moreover, $\tilde\beta$ is a right inverse to $C(\al)$ for $\al$ as in Proposition~\ref{BCC}. 
\end{prop}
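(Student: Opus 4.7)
The plan is to construct $\tilde\beta$ by an explicit pointwise formula extending the naive inclusion $x\ot l_{\vec k}\mapsto x\ot (1\ot l_{\vec k})$ into simplicial degree $0$ of the bar construction. Using the identification $C^n(B(\Phi,\e,\e))(m)\cong B(\Phi,\e,C^n(\e)(-,m))$ from Example~\ref{bimodex}, the target becomes a bar construction of $\Phi$ against the right $\e$-module $C^n(\e)(-,m)$, so that $\tilde\beta$ is required to be a chain-level section of the augmentation $\al$ to $\Phi\ot_\e C^n(\e)(-,m)=C^n(\Phi)(m)$.

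I would first treat the case $n=1$ in detail. Setting $\tilde\beta_0(x\ot l_k):=x\ot (1_{k+m}\ot l_k)\in B_0$, a direct computation shows that the discrepancy $d\tilde\beta_0-\tilde\beta_0 d$ applied to $x\ot l_k$ equals
\[
\pm\sum_{k'<k}\bigl[\Phi(i(f_{k,k'})+id_m)(x)\ot (1_{k'+m}\ot l_{k'})\,-\,x\ot ((i(f_{k,k'})+id_m)\ot l_{k'})\bigr].
\]
Each bracketed term is exactly the simplicial boundary $d_0-d_1$ of the one-simplex $x\ot (i(f_{k,k'})+id_m)\ot (1_{k'+m}\ot l_{k'})$ in $B_1$. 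I would therefore add these one-simplices (with suitable signs) as a first correction $\tilde\beta_1$. This correction creates its own Hochschild discrepancy in the $l_{k'}$ index, which is cancelled in turn by a $B_2$-term $\tilde\beta_2$ obtained by the same procedure, and so on. Iterating yields the closed-form formula
\[
\tilde\beta(x\ot l_k)=\sum_{p\ge 0}\sum_{k=k_0>k_1>\dots>k_p\ge 1}\pm\, x\ot (i(f_{k_0,k_1})+id_m)\ot\cdots\ot (i(f_{k_{p-1},k_p})+id_m)\ot (1_{k_p+m}\ot l_{k_p}),
\]
which is a finite sum because the strictly decreasing chain $k_0>k_1>\cdots\ge 1$ has length at most $k$. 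For general $n$, the same inductive pattern is applied simultaneously in each of the $n$ Hochschild directions, now working with the iterated Hochschild differential on $C^n(\e)(-,m)$.

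Once $\tilde\beta$ has been defined and verified to be a chain map, the remaining assertions follow quickly. The identity $C^n(\al)\circ\tilde\beta=\mathrm{id}$ holds because $\al$ vanishes on $B_p$ for $p\ge 1$, so only $\tilde\beta_0$ contributes, and $\al$ sends $x\ot(1\ot l_{\vec k})$ to $x\ot l_{\vec k}$. Since $C^n(\al)$ is itself a quasi-isomorphism by an $n$-fold application of Proposition~\ref{qi} to $\al$, any chain-level right inverse is automatically a quasi-isomorphism. Naturality in $\Phi\to\Phi'$ is immediate from the formula, which involves only the leftmost $\Phi$-entry nontrivially. Naturality under a functor $j:\e\to\e'$ with $i'=j\circ i$ follows because $j$ preserves identities and sends each $i(f_{k_{i-1},k_i})$ to $i'(f_{k_{i-1},k_i})$, so the formulas for $\tilde\beta$ over $\e$ and over $\e'$ intertwine along the comparison map $B(j^*\Phi',\e,\e)\to B(\Phi',\e',\e')$ induced by $j$.

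The hard part will be the sign bookkeeping. The convention $D_p=(-1)^p\delta+d$ on the bar construction, together with the Koszul signs coming from the internal differential of $\Phi$ and the shifts $|l_k|=k-1$ built into the Hochschild complex, must be reconciled with the alternating face sum $\sum(-1)^i d_i$ so that the telescoping cancellations across $\tilde\beta_0,\tilde\beta_1,\tilde\beta_2,\dots$ actually go through. In the iterated case this has to be done coherently across all $n$ Hochschild directions at once. This is a routine but intricate combinatorial verification, and will form the bulk of the actual proof.
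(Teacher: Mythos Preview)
Your proposal is correct and follows essentially the same approach as the paper. Your closed-form formula is exactly the paper's $\tilde\beta=\sum_{p\ge 0}(d_L)^p$, expanded out term by term: iterating $d_L(l_k)=\sum_{k'<k}f_{k,k'}\ot l_{k'}$ produces precisely your chains $(i(f_{k_0,k_1})+id_m)\ot\cdots\ot(i(f_{k_{p-1},k_p})+id_m)\ot l_{k_p}$.

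Two organizational differences are worth noting. First, the paper defines $\tilde\beta$ over $\e=\Ai$ and then, for general $\e$, post-composes with the comparison map $C^n(B(i^*\Phi,\Ai,\Ai))(m)\to C^n(B(\Phi,\e,\e))(m)$ induced by $i$; this makes naturality in $\e$ automatic rather than something to be checked. Second, the paper carries out the chain-map verification you defer: it computes $_{(k)}(\tilde\beta d)_p$ and $_{(k)}(d\tilde\beta)_p$ separately in each simplicial degree $p$ and shows their difference is $\sum_{i=1}^{p}(-1)^i(d_L)^{p-i}\bigl((d_\Ai)_i+d_i d_L\bigr)(d_L)^i$, which vanishes because $(d_\Ai+d_L)^2=0$ in the graph complex. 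Your iterative-correction picture is a good way to motivate the formula, but the actual cancellation you sketch is more cleanly organized via the compact $(d_L)^p$ notation.
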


\begin{proof}
We first define
$\tilde\beta$ in the case $\e=\Ai$, and using the identification $$C^n(B(\Phi,\Ai,\Ai))(m)\cong B(\Phi,\Ai,C^n(\Ai)(m))$$
of Example~\ref{bimodex}.  
 The map $\tilde\beta$  for a general  $\e$ and $\Phi:\e\to\Comp$ is then obtained by post-composition with the quasi-isomorphism  
$$C^n(B(i^*\Phi,\Ai,\Ai))(m)\to C^n(B(\Phi,\e,\e))(m)$$ 
induced by $i:\Ai\to \e$. The naturality of $\tb$ in $\e$ follows from the naturality of that second map.

Recall from \ref{annulisec} the map 
 $$d_L:L_{k}\to \bigoplus_{1\le j< k} \Ai(k,j)\ot L_{j}.$$
We consider here more generally the map 
$$d_L:L_{k_1}\ot\dots\ot L_{k_n}\to \bigoplus_{1\le k< n} \Ai(k_1+\dots+k_n,k_1'+\dots+k'_n)\ot L_{k'_1}\ot\dots\ot L_{k'_n}$$
induced by the differential of the $\oc{k}{n}$--graph which is the union $l_{k_1}\sqcup\dots\sqcup l_{k_n}$, where $k=k_1+\dots+k_n$. 
We let $\tilde\beta:=\sum_{p\ge 0} (d_L)^p$, where we interpret $(d_L)^p$ as the composition
\begin{align*}
\Phi(k+m)\ot L_{\uk}\sta{(d_L)^p}{\rar} \bigoplus_{j_i} \Phi(k+m)\ot\Ai(k,j_1)\ot\dots\ot\Ai(j_{p-1},j_p)\ot L_{\underline{j}_p}\\
\sta{+id_m}{\rar} \bigoplus_{j_i} \Phi(k+m)\ot\Ai(k+m,j_1+m)\ot\dots\ot\Ai(j_{p-1}+m,j_p+m)\ot L_{\underline{j}_p}
\end{align*}
with image in the $p$th simplicial level of $B(\Phi,\Ai,C^n(\Ai)(m))$, where $L_{\uk}=L_{k_1}\ot\dots\ot L_{k_n}$ 
and $L_{\underline{j}_p}=L_{j^p_1}\ot\dots\ot L_{j^p_n}$ is identified with $\lgl id_{j_p+m}\rgl\ot L_{\underline{j}_p}$ in 
$\Ai(j_p+m,j_p+m)\ot L_{\underline{j}_p}$ in $C^n(\Ai)(m)$. 
Note that the sum is always finite as $(d_L)^p$ applied to $L_k$ is 0 for all $p\ge k$.

We will show that the relation $d\tb=\tb d$ holds on each component as maps 
$$\bigoplus_{(k)=(k_1,\dots,k_n)}\!\!\!\!\!\! \Phi(k+m)\ot L_{k_1}\ot\dots\ot L_{k_n}\ \ \rar\ \ \bigoplus_p B_p\big(\Phi,\Ai,C^n(\Ai)(m)\big)$$
i.e. 
that for each fixed $(k)$,  
the images of $d\tb$ and $\tb d$ agree on the component of simplicial degree $p$. We first consider $\tb d$.

As $d=d_\Phi + c_\Phi d_L$, we have 
on the $(k)$th component
$$_{(k)}(\tb d)=\sum_{i=0}^{K-1}(d_L)^id_\Phi + \sum_{i=0}^{K-2} (d_L)^ic_\Phi d_L$$
with $K=\max(k_1,\dots,k_n)$, 
which can be rewritten as 
$$_{(k)}(\tb d)=d_\Phi\sum_{i=0}^{K-1}(-1)^i(d_L)^i + d_0 \sum_{i=0}^{K-2} (d_L)^{i+1}$$
as $d_Ld_\Phi=-d_\Phi d_L$ and $\bar d_L^i c_\Phi \bar d_L=d_0 \bar d_L^{i+1}$ with $d_0$ the $0$--th face map in\\ 
$B_{i}(\Phi,\Ai,C^n(\Ai)(m))$.  
Hence the component of $_{(k)}(\tb d)$ of simplicial degree $p$ is 
$$ _{(k)}(\tb d)_p =(-1)^pd_\Phi (d_L)^p + d_0(d_L)^{p+1}.$$

\smallskip

On the other hand, we have $_{(k)}(d\tb)=d(_{(k)}\tb)$ where the differential on the $p$th component of $_{(k)}\tb$ is
$(-1)^p(d_\Phi+(d_\A)_1+\dots+(d_\A)_{p} + \widetilde{d_L}) + \sum_{i=0}^p(-1)^id_i $, where $(d_\A)_i$ denotes the
differential of the $i$th factor $\Ai(-,-)$ and $\widetilde{d_L}$ the map  
$d_{p+1}d_L$ which applies the differential to the factors $L$ without increasing the simplicial degree. 
As the face maps $d_i$ reduce the simplicial degree, we have 
$$_{(k)}(d\tb)_p=(-1)^p\Big(d_\Phi+(d_\A)_1+\dots+(d_\A)_{p} + d_{p+1}d_L\Big) (d_L)^p + \Big(\sum_{i=0}^{p+1}(-1)^id_i\Big)(d_L)^{p+1}.$$ 
This is a sum of two compositions whose respective first terms are exactly $_{(k)}(\tb d)_p$, and whose last terms cancel.  
Hence $$ _{(k)}(d\tb)_p - \, _{(k)}(\tb d)_p= 
\sum_{i=1}^{p} \Big((-1)^p(d_\A)_i(d_L)^p+(-1)^i d_i(d_L)^{p+1}\Big).$$
The $i$th term in the sum can be rewritten as 
$$(-1)^i(d_L)^{p-i}\Big((d_\A)_i+d_i d_L\Big)(d_L)^i$$ 
which is 0 as the middle part $((d_\A)_i+d_i d_L) d_L$ is the square of a differential
in the graph complex, which gives the desired equality.

\medskip

As  
$C^n(\al)(m)\circ \tb$ is the identity and $C^n(\al)(m)$ is a quasi-isomorphism by Propositions~\ref{BCC} and \ref{qi}, 
$\tb$ is also a quasi-isomorphism. 
The map $\tb$ is natural in $\Phi$ as $d_L$ is natural in $\Phi$. 
\end{proof}

Next we describe how the Hochschild operator behaves under iteration.  
Recall from Section~\ref{Algsec} that 
 a monoidal functor $\Phi:\e\to \Comp$ is {\em h-split} if the maps $\Phi(n)\ot\Phi(m)\to \Phi(n+m)$ 
are quasi-isomorphisms, and  {\em split} if the maps are isomorphisms.

For $\Phi:\e\to \Comp$, we can consider the iterated Hochschild functor 
$C^n(\Phi)=C(C(\dots C(\Phi)\dots))$. When $\Phi$ is h-split monoidal, it computes the tensor powers of the
Hochschild complex:

\begin{prop}\label{monoidal}
If $\Phi:\e\to \Comp$ is monoidal, then there are natural maps 
$$\la:C(\Phi)(0)^{\ot n}\ot\Phi(1)^{\ot m}\rar C^n(\Phi)(m)$$
and $$\bar \la:\bC(\Phi)(0)^{\ot n}\ot\Phi(1)^{\ot m}\rar \bC^n(\Phi)(m).$$ 
These maps are quasi-isomorphisms if $\Phi$ is h-split, and isomorphisms if $\Phi$ is split.

Moreover, there exists an action of $\Si_n$ on $C^n(\Phi)$ such that if $\e, \Phi$ and $i$ are symmetric monoidal, 
these maps are $\Si_n\x\Si_m$--equivariant (where $\Si_m$ acts on $C^n(\Phi)(m)$ via the symmetries of $\e$). 
\end{prop}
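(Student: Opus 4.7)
The plan is to define $\la$ using the iterated monoidal multiplication of $\Phi$, and then check linearity, signs, and a spectral sequence comparison. On a generator
\[
(x_1 \ot l_{k_1}) \ot \cdots \ot (x_n \ot l_{k_n}) \ot y_1 \ot \cdots \ot y_m,
\]
with $x_j \in \Phi(k_j)$ and $y_i \in \Phi(1)$, I would set
\[
\la = \varepsilon\cdot \mu(x_1 \ot \cdots \ot x_n \ot y_1 \ot \cdots \ot y_m)\ot l_{k_1}\ot \cdots \ot l_{k_n}
\]
landing in the summand $\Phi(k_1+\cdots+k_n+m)\ot L_{k_1}\ot \cdots\ot L_{k_n}$ of $C^n(\Phi)(m)$. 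Here $\mu$ is the iterated monoidal structure map of $\Phi$ and $\varepsilon$ is the Koszul sign obtained by moving each $l_{k_i}$ (of degree $k_i-1$) past the $x_j$'s and $y_j$'s to its right. Since the quotient defining $\bC$ is taken only on the $\Phi$-factor and $\mu$ is compatible with units, the same formula descends to give $\bar\la$.

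I would then verify that $\la$ is a chain map. The differential of $C^n(\Phi)(m)$ splits as $d_\Phi$ plus Hochschild contributions of the form $\Phi(\mathrm{id}+i(f_{k_i,j})+\mathrm{id})$ applied to the $\Phi$-factor coming from the differentials $d_L$ of the individual $L_{k_i}$-factors. Naturality of $\mu$ with respect to morphisms in $\e$ gives
\[
\Phi(\mathrm{id}+i(f_{k_i,j})+\mathrm{id})\circ\mu = \mu\circ(\mathrm{id}\ot\Phi(i(f_{k_i,j}))\ot\mathrm{id}),
\]
so each target-side Hochschild contribution pulls back to the corresponding Hochschild contribution on the source, and the remaining task is matching Koszul signs. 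For the isomorphism/quasi-isomorphism claim, $\la$ is a direct sum over the $(k_1,\ldots,k_n)$-summands with the summand being precisely $\mu$ tensored with the identity on $L_{k_1}\ot\cdots\ot L_{k_n}$. So if $\Phi$ is split then $\mu$ is an iso and $\la$ is immediately an iso; if $\Phi$ is only h-split, I would filter by the total $L$-degree $\sum(k_i-1)$, note that the Hochschild part strictly decreases the filtration, and argue as in Proposition~\ref{qi} that $\la$ reduces to $H_*(\mu)$ on the $E^1$-page, which is an iso. The reduced statement follows by passing to the quotient and invoking Lemma~\ref{quot} to preserve the $E^1$-isomorphism on the reduced pieces.

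For the $\Si_n$-action on $C^n(\Phi)$: given $\sigma\in\Si_n$, let $\tau_\sigma$ be the block-permutation symmetry in the PROP $\e$ that permutes the first $n$ blocks (of sizes $k_1,\ldots,k_n$) according to $\sigma$ and fixes the final $m$ coordinates. Define
\[
\sigma\cdot(x\ot l_{k_1}\ot\cdots\ot l_{k_n}) = \pm\,\Phi(\tau_\sigma)(x)\ot l_{k_{\sigma^{-1}(1)}}\ot\cdots\ot l_{k_{\sigma^{-1}(n)}}
\]
with Koszul sign for permuting the graded $l_{k_i}$'s. This uses only the symmetric monoidal structure of $\e$, not of $\Phi$. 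Since $i: \Ai \to \e$ is symmetric monoidal, conjugating by $\tau_\sigma$ carries the Hochschild operator on block $j$ to the Hochschild operator on block $\sigma(j)$, so the action commutes with the differential; the relation $\tau_{\sigma\sigma'} = \tau_\sigma\tau_{\sigma'}$ in $\e$ makes it a group action. When $\Phi$ is in addition symmetric monoidal, $\mu$ intertwines block-permutations in $\e$ with the corresponding tensor-factor permutations (with the appropriate Koszul signs) in $\Comp$, and $\Si_n\times\Si_m$-equivariance of $\la$ follows at once; the same argument applies to $\bar\la$ since the quotient by unit images is preserved by the symmetries.

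The main obstacle is the Koszul sign bookkeeping: verifying that $\la$ is a chain map requires tracking signs as the graded symbols $l_{k_i}$ cross elements of various degrees, and $\Si_n$-equivariance demands that the signs produced by permuting the $l_{k_i}$'s on the source side exactly match those read off from $\Phi(\tau_\sigma)$, i.e.~from the symmetry of $\e$ interpreted in $\Comp$ under the symmetric monoidal $\Phi$.
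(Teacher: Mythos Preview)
Your approach is correct and essentially the same as the paper's: define $\la$ by permuting the $L$--factors out and applying the monoidal structure map $\mu$ of $\Phi$, deduce the split/h-split claims from the corresponding property of $\mu$ (the paper states this tersely; your filtration by total $L$--degree is exactly the argument of Proposition~\ref{qi} that justifies it), and invoke Lemma~\ref{quot} for the reduced case. The paper does not spell out the chain-map check or the $\Si_n$--action, so your explicit treatment of those is a welcome addition; the one place where the paper is more detailed than you is the reduced h-split case, where Lemma~\ref{quot} is applied \emph{iteratively}, freezing all but one variable at a time in the natural transformation $\Phi\ot\cdots\ot\Phi\to\Phi(-+\cdots+-)$ to reduce to a quasi-isomorphism of functors in a single argument --- you should unpack that iteration rather than leave it as a single invocation.
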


\begin{proof}
$C_*(\Phi)(0)^{\ot n}=(\oplus_{k_1} \Phi(k_1)\ot L_{k_1})\ot\dots\ot(\oplus_{k_n}\Phi(k_n)\ot L_{k_n})$ and \\
$C_*^n(\Phi)(m)=\oplus_{k_n}(\dots (\oplus_{k_1} \Phi(k_1+\dots+k_n+m)\ot L_{k_1})\ot\dots\ot L_{k_n})$. 
The maps $\la$ and $\bar \la$ are then defined by appropriately permuting the factors and then using the monoidal structure of
$\Phi$. These maps are isomorphisms/quasi-isomorphisms in the unreduced case if the structure maps of $\Phi$ have that property. 
For the reduced complexes, we need\\
$\Phi(k_1)/U_{k_1}\ot\dots\ot\Phi(k_n)/U_{k_n}\ot\Phi(m)\to \Phi(k_1+\dots +k_n+m)/U_{k_1}/\dots /U_{k_n}$ 
to be an isomorphism when $\Phi$ is split and
a quasi-isomorphism when $\Phi$ is h-split. This follows from an iteration of 
Lemma~\ref{quot}: Consider the restriction of the natural transformation $\Phi\ot\dots\ot\Phi\to\Phi(\ +\dots +\ )$
to the first variable and apply the lemma with $J_1=\{2,\dots,k_1\}$. This gives a quasi-isomorphism 
$\Phi(k_1)/U_{k_1}\ot\Phi(k_2)\ot\dots\Phi(m)\to \Phi(k_1+\dots+m)/U_{k_1}$. This quasi-isomorphism is functorial in the
variables $k_2,\dots,m$ and we can repeat the process until we obtain the desired result. 
\end{proof}

\subsection{Action on Hochschild complexes}\label{Hact}

Given a monoidal dg-category $\D$ with objects pairs of natural numbers $\oc{m}{n}$, we say that a pair of chain complexes $(V,W)$ is a {\em $\D$--module} if
there is a split monoidal dg-functor $\Psi:\D\to\Comp$ with $\Psi(\oc{0}{1})=V$ and $\Phi(\oc{1}{0})=W$, i.e. if  
there are chain maps 
$$\big(V^{\otimes n_1}\ot W^{\otimes m_1}\big)\ot \D(\oc{m_1}{n_1},\oc{m_2}{n_2}) \rar V^{\otimes n_2}\ot W^{\otimes m_2}$$
compatible with composition in $\D$.    
We say that $(V,W)$ is a {\em homotopy $\D$--module} if the compatibility condition is only satisfied up to homotopy, that is if 
$\Psi$ is only a functor up to homotopy, satisfy the equation $\Psi(f\circ g)\simeq \Psi(f)\circ\Psi(g)$ for any pair of composable morphisms $f,g$ in
$\D$. 
In particular, taking homology with field coefficients (or general coefficients
but restricting to the ``operadic part'' with $\oc{m_2}{n_2}=\oc{0}{1}$ or $\oc{1}{0}$), we get in both cases an honest action of $H_*(\D)$ on $(H_*(V),H_*(W))$. 

If $\D$ is symmetric monoidal,
we say that the module structure is {\em  $\Si$--equivariant} if the functor $\Psi$ is symmetric monoidal.

\medskip

Proposition \ref{bimodule} in the case where $\Phi$ is the $(\e,\e^{op})$--bimodule $\e$
can be reinterpreted as follows: Given $\e$, we can define its {\em Hochschild core category} $C\e$ with objects 
$$\oc{m}{n}=(m,n)\in \Obj(\e)\x\N \ (=\N\x\N),$$ for $\N$ the natural numbers including 0, and
morphisms
$$C\e(\oc{m_1}{n_1},\oc{m_2}{n_2})=\left\{\begin{array}{ll} C^{n_2}(\e(m_1,-))(m_2) & n_1=0\\
0& n_1\neq 0 \end{array}\right.$$
where $C^0$ means the identity operator, so that $C\e(\oc{m_1}{0},\oc{m_2}{0})=\e(m_1,m_2)$. 
The only possible non-trivial compositions in $C\e$ are given by the bimodule
structure of $C^n(\e(m,-))$ described in Proposition~\ref{bimodule}.
Moreover, $C\e$ is monoidal via the maps 
$$C^{n}(\e(m_1,-))(m_2)\ot C^{n'}(\e(m'_1,-))(m'_2) \to C^{n+n'}(\e(m_1+m_1',-))(m_2+m_2')$$
as in Proposition~\ref{monoidal}, and $C\e$ is symmetric monoidal when the same is true for $\e$.

We call a monoidal category $\tCE$ with objects $\N\x\N$ an {\em extension} of $C\e$ if there is a monoidal inclusion 
$C\e\inc\tCE$ with
$\tCE(\oc{m_1}{n_1},\oc{m_2}{n_2})=C\e(\oc{m_1}{n_1},\oc{m_2}{n_2})$ when $n_1=0$.  
We define the {\em reduced Hochschild core category} $\bC\e$ and its extensions in the same way, replacing $C$ by $\bC$.  

Our main result says that if $\tCE$ is an extension of $C\e$ (or $\bC\e$), then $\tCE$ acts on the
(reduced) Hochschild complex of split monoidal functors $\Phi:\e\to \Comp$ in the following sense:

\begin{thm}\label{action}
Let $(\e,i)$ be a prop with $\Ai$--multiplication and $\tCE$ an extension of  $C\e$.  
Then for any monoidal functor $\Phi:\e\to\Comp$, 
there is a diagram 
 $$\xymatrix{\Big(C(\Phi)(0)^{\ot n_1}\ot\Phi(1)^{\ot m_1}\Big) \ \textstyle{\bigotimes}\  \tCE(\oc{m_1}{n_1},\oc{m_2}{n_2})
\ar[r]^-\ga & C^{n_2}(\Phi)(m_2)\\
& C(\Phi)(0)^{\ot n_2}\ot\Phi(1)^{\ot m_2} \ar[u]_{\la}
}$$
natural in $\Phi$,  
with $\la$ as in  Proposition~\ref{monoidal}. 
If $\Phi$ is split, the composition $\la^{-1}\circ \ga$ makes the pair 
$(C(\Phi)(0),\Phi(1))$ into a $\tCE$--module, and a homotopy $\tCE$--module for any choice of $\la^{-1}$ if $\Phi$ is h-split. 
Moreover, if $\e,\Phi,i$ and $\la^{-1}$ are symmetric monoidal, the module structure is $\Si$--equivariant.

If $(\e,i)$ is a prop with unital $\Ai$--multiplication, the same holds for the reduced case, replacing $C$ by $\bC$. 
\end{thm}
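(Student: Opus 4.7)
The strategy is to exploit the fact that on the bar resolution, the identification $C^n(B(\Phi,\e,\e))(m) \cong B(\Phi,\e,C^n(\e))(m)$ of Example~\ref{bimodex} makes an action of $\tCE$ immediately visible: the last tensor factor lies in $C\e(\oc{k_p}{0},\oc{m}{n}) = C^n(\e(k_p,-))(m)$ and thus naturally admits left composition by $\tCE$-morphisms. One then transports this action to $C(\Phi)$ via the natural quasi-isomorphism $\al$ of Proposition~\ref{BCC} together with its explicit pointwise homotopy inverse $\tb$ from Proposition~\ref{beta}. Concretely, I would define $\gamma$ by first passing from $C(\Phi)(0)^{\ot n_1} \ot \Phi(1)^{\ot m_1}$ to $C^{n_1}(\Phi)(m_1)$ via $\la$, applying $\tb$ to land in $B(\Phi,\e,C^{n_1}(\e))(m_1)$, and composing the last factor with $\sigma \in \tCE(\oc{m_1}{n_1},\oc{m_2}{n_2})$ using $\tCE$-composition; by the defining extension property, the output sits in $\tCE(\oc{k_p}{0},\oc{m_2}{n_2}) = C\e(\oc{k_p}{0},\oc{m_2}{n_2}) = C^{n_2}(\e(k_p,-))(m_2)$, so the whole thing lies in $B(\Phi,\e,C^{n_2}(\e))(m_2) = C^{n_2}(B(\Phi,\e,\e))(m_2)$, and applying $C^{n_2}(\al)$ returns to $C^{n_2}(\Phi)(m_2)$. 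The chain-map property and naturality in $\Phi$ are immediate from those of $\tb$, $\al$, and $\tCE$-composition.

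The main technical step is associativity. The auxiliary bar-level action $\tilde\gamma$ given by composing the last factor with $\sigma$ is \emph{strictly} associative, a direct consequence of associativity of composition in $\tCE$. To upgrade $\gamma = C^{n_2}(\al) \circ \tilde\gamma \circ (\tb \ot id)$ to a strictly associative map, it suffices to show that $\tilde\gamma \circ (\tb \ot id)$ lands in $\Img(\tb)$: on $\Img(\tb)$ the endomorphism $\tb \circ C^{n_2}(\al)$ is the identity, since $C^{n_2}(\al) \circ \tb = id$ (Proposition~\ref{beta}) makes $\tb \circ C^{n_2}(\al)$ into a projector with image $\Img(\tb)$. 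I would verify this image-preservation property by inspecting the explicit formula $\tb = \sum_p (d_L)^p$: since $d_L$ only modifies the $L$-factors, it commutes with composition by $\sigma$ in the last bar slot, so $\tilde\gamma(\tb(-) \ot \sigma)$ factors as $\tb$ applied to an induced map $C^{n_1}(\Phi)(m_1) \to C^{n_2}(\Phi)(m_2)$. When $\Phi$ is split, $\la$ is an isomorphism, so $\la^{-1} \circ \gamma$ inherits strict associativity; when $\Phi$ is only h-split, any chosen quasi-inverse $\la^{-1}$ gives associativity only up to homotopy, since $\la^{-1} \circ \la \simeq id$ but is not equal to it.

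For the $\Si$--equivariant statement I would replace $B$ by the symmetric quotient $B^\Si$ of Section~\ref{Bar}, using Proposition~\ref{Bmon} to preserve symmetric monoidality and noting that $\tb$ descends to the $\Si$-quotient since $d_L$ is itself symmetric. For the reduced version the same argument applies verbatim with $\bC$ replacing $C$: the quotient of Definition~\ref{CbDef} is preserved both by the bar construction and by $\tb$, while the well-definedness of the differential is Lemma~\ref{reddiff}. The principal obstacle throughout is this image-preservation claim for $\tilde\gamma \circ (\tb \ot id)$; it is what distinguishes the split case (strict module) from the h-split case (homotopy module), and it is what makes the explicit pointwise homotopy inverse of Proposition~\ref{beta} the essential tool, rather than any generic homotopy inverse.
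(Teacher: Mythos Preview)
Your construction of $\gamma$ matches the paper's exactly: pass via $\la$ and $\tb$ to the bar resolution, use the identification of Example~\ref{bimodex} to compose with $\sigma$ in $\tCE$, and return via $C^{n_2}(\al)$. The difficulties are in the two places you flag as routine.

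\textbf{Associativity.} Your key claim, that $\tilde\gamma\circ(\tb\ot id)$ lands in $\Img(\tb)$, is false. Already in the degenerate case $n_1=n_2=0$ one sees this: there $\tb$ is $a\mapsto a\ot id_{m_1}$ in $B_0(\Phi,\e,\e)(m_1)$, and composition with $\sigma\in\e(m_1,m_2)$ yields $a\ot\sigma$, which is not of the form $a'\ot id_{m_2}$ unless $\sigma$ is an identity. More generally, the image of $\tb$ in simplicial degree $p$ always has its last bar factor equal to $id_{j_p+m}\ot l_{\underline{j}_p}$, and composition with $\sigma$ destroys this form. Your justification that ``$d_L$ only modifies the $L$-factors and so commutes with composition by $\sigma$'' does not parse: the $L$-factors \emph{are} part of the last bar slot on which $\sigma$ acts, and composition with $\sigma$ replaces them entirely. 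The paper's argument is different and does not attempt to stay inside $\Img(\tb)$: it observes instead that $\tilde\gamma$ preserves simplicial degree while the final $C^{n_3}(\al)$ annihilates all positive simplicial degrees, so only the simplicial degree $0$ part survives, and there the discrepancy between $\tb\circ C^{n_2}(\al)$ and the identity is absorbed by the subsequent $\al$ (functoriality of $\Phi$ and associativity in $\tCE$ make the two degree-$0$ outputs agree after $\al$).

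\textbf{Reduced case.} Your assertion that $\tb$ descends to the reduced complex is also incorrect; the paper explicitly notes that ``the map $\tb$ is in fact not well-defined in that case''. Concretely, $\tb$ sends $\Phi(u_j)(y)\ot l_k$ (identified with $0$ in $\bC$) to $\Phi(u_j)(y)\ot id_{k+m}\ot l_k + \cdots$ in simplicial degree $0$, and this is \emph{not} in the image of $u_{j'}$ acting on the last bar factor: in $B_0(\Phi,\e,\e)(k+m)$ the elements $\Phi(u_j)(y)\ot id$ and $y\ot u_j$ are distinct (related by a face map, not an identification). What the paper shows is the weaker statement that the full composite $C^{n_2}(\al)\circ\tilde\gamma\circ\tb$ is well-defined on the quotient, by tracking the unit through the composition in $\tCE$ and using that the extension $\tCE$ is an extension of $\bC\e$.
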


An extension $\tCE$ of $C\e$ can be thought of as a way to encode a natural action on the Hochschild complex of the representable functors $\e(n,-)$,
and the above theorem is only non-trivial when the complex $\tCE(\oc{m_1}{n_1},\oc{m_2}{n_2})$ are not identically 0  
for $n_1\neq 0$. Thinking of the representable functors as generalized free algebras, the theorem can be interpreted as saying that an
natural/compatible action on the Hochschild complex of free algebras induces an action on the Hochschild complex of all algebras.

The map $\ga$ in the statement is explicit, given by the big diagram in the proof of the theorem below. 
This allows to write down formulas for operations given cycles in the extension category (see
Section~\ref{ks_section} and the end of Section~\ref{strict_section}).

Note that restricting to $n_2=1$ and $m_2=0$ avoids having to invert $\la$,  and  
restricting further to $n_1=1$ and $m_1=0$ avoids needing $\la$ at all. In particular,  
$C(\Phi)(0)$ is a $\tCE(\oc{0}{1},\oc{0}{1})$--module without any monoidal assumption on $\Phi$. 
Alternatively, one can use $C^n(\Phi)(m)$ as a model of $C(\Phi)(0)^{\ot n}\ot \Phi(1)^{\ot m}$ which admits an action of $\tCE$
without reference to 
$\lambda$, as in the following: 
\begin{cor}
Let $(\e,i)$ be a prop with (unital) $\Ai$--multiplication. 
For any $\Phi:\e\to\Comp$ and any extension $\tCE$ of $C\e$, taking $C_\Phi(\oc{m}{n})=C^n(\Phi)(m)$ defines a dg-functor 
$C_\Phi:\tCE\to\Comp$ extending $\Phi$ on $\e$ (and the same in the reduced case). Moreover, the association $\Phi\mapsto C_\Phi$ defines a functor 
$\Fun(\e,\Comp)\to \Fun(\tCE,\Comp)$. 
\end{cor}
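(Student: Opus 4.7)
The plan is to realize the corollary as an unfolding of Theorem~\ref{action} in which the map $\lambda$ of Proposition~\ref{monoidal} plays no essential role. On objects, define $C_\Phi(\oc{m}{n}) := C^n(\Phi)(m)$; this is sensible because the Hochschild operator $C$ sends dg-functors $\e\to\Comp$ to dg-functors $\e\to\Comp$ (Definition~\ref{CDef}), so it can be iterated without any monoidal assumption on $\Phi$. On morphisms belonging to the subcategory $C\e\subset\tCE$, i.e.\ those with source $\oc{m_1}{0}$, a morphism to $\oc{m_2}{n_2}$ is by definition an element of $C^{n_2}(\e(m_1,-))(m_2)$, and the structure map of $\Phi$ as an $\e$--module, combined with Proposition~\ref{bimodule}, yields a natural chain map
$$\Phi(m_1)\ot C^{n_2}(\e(m_1,-))(m_2)\rar C^{n_2}(\Phi)(m_2)$$
providing the structure map of $C_\Phi$ on $C\e$. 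For morphisms with source and target in $\e$ this reduces to the $\e$--module structure of $\Phi$, so $C_\Phi$ extends $\Phi$.

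For morphisms with $n_1>0$, I would appeal to Theorem~\ref{action}. The key observation is that the explicit map $\gamma$ constructed in the proof of that theorem factors as
$$\gamma\ =\ \hat\gamma\circ(\la\ot\mathrm{id}),$$
where
$$\hat\gamma\colon C^{n_1}(\Phi)(m_1)\ot \tCE(\oc{m_1}{n_1},\oc{m_2}{n_2})\rar C^{n_2}(\Phi)(m_2)$$
is defined for arbitrary $\Phi$. Indeed, $\gamma$ is built from the bar resolution of Section~\ref{Bar} and the explicit homotopy inverse $\tilde\beta$ of Proposition~\ref{beta}, together with the $\tCE$--action on the representable case $\Phi=\e(m_1,-)$; none of these ingredients require a monoidal structure on $\Phi$. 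The role of $\la$ in Theorem~\ref{action} is solely to rewrite the source $C^{n_1}(\Phi)(m_1)$ as $C(\Phi)(0)^{\ot n_1}\ot\Phi(1)^{\ot m_1}$, which presupposes $\Phi$ to be (h)--split. Discarding that identification yields $\hat\gamma$, which we declare to be the structure map of $C_\Phi$. The same argument applies in the reduced case, using $\bC$.

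Associativity of $C_\Phi$ with respect to composition in $\tCE$ follows from the associativity of the action in Theorem~\ref{action}: once $\la$ is removed from the picture, the compatibility with composition in $\tCE$ becomes a statement purely about $\hat\gamma$, and this is exactly what the (proof of the) theorem establishes at the level of the representables before any split monoidal assumption is invoked. The dg-property is similarly inherited from the chain map property of $\gamma$.

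For the functoriality of $\Phi\mapsto C_\Phi$: a natural transformation $\eta\colon \Phi\to\Phi'$ induces componentwise chain maps $C^n(\eta)(m)\colon C^n(\Phi)(m)\to C^n(\Phi')(m)$ by naturality of $C$ (noted after Definition~\ref{CDef}) and its iterates. Compatibility with $\tCE$--morphisms follows from the naturality of $\hat\gamma$ in $\Phi$, visible in its explicit construction. The main obstacle is precisely this factorization $\gamma=\hat\gamma\circ(\la\ot\mathrm{id})$: one must trace through the explicit ``big diagram'' of Theorem~\ref{action} and confirm that the monoidal structure of $\Phi$ is used only to combine separate tensor factors of Hochschild complexes, never to define an underlying operation. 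This is ultimately a bookkeeping argument about how the summands $\Phi(k_1+\dots+k_n+m)$ of $C^n(\Phi)(m)$ are transformed, rather than tensor products built from $\Phi(1)$ and $C(\Phi)(0)$.
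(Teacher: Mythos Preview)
Your proposal is correct and is precisely what the paper means by ``a direct corollary of the proof of Theorem~\ref{action}'': you have simply made explicit that the map $\gamma$ in that proof factors as $\hat\gamma\circ(\lambda\ot\mathrm{id})$, with $\hat\gamma$ defined for arbitrary $\Phi$, and that the associativity argument given there (the failure of $\tilde\beta\circ C^{n_2}(\alpha)$ to be the identity lying in positive simplicial degree, hence killed by the final $C^{n_3}(\alpha)$) uses no monoidal hypothesis on $\Phi$. One small redundancy: your separate treatment of the case $n_1=0$ via Proposition~\ref{bimodule} is already subsumed by $\hat\gamma$, since $C^0(\Phi)(m_1)=\Phi(m_1)$ and $\tilde\beta$ reduces to the map $\beta$ of Proposition~\ref{BCC} in that case.
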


This corollary is a direct corollary of the proof of Theorem~\ref{action}.

\begin{proof}[Proof of Theorem~\ref{action}]
The action is defined by the following diagram: 
{\small $$\xymatrix{C(\Phi)(0)^{\ot n_1}\ot\Phi(1)^{\ot m_1}\ot \tCE(\oc{m_1}{n_1},\oc{m_2}{n_2}) \ar[d]^{\la\ot id} \ar@{-->}[dr]^{\hspace{5mm}\ga}  
  & C(\Phi)(0)^{\ot n_2}\ot\Phi(1)^{\ot m_2}\ar[d]_\la\\
C^{n_1}(\Phi)(m_1)\ot \tCE(\oc{m_1}{n_1},\oc{m_2}{n_2}) \ar[d]^{\tb\ot id}_\simeq & C^{n_2}(\Phi)(m_2)\\
C^{n_1}\big(B(\Phi,\e,\e)\big)(m_1)\ot \tCE(\oc{m_1}{n_1},\oc{m_2}{n_2}) \ar[d]_\cong  & C^{n_2}\big(B(\Phi,\e,\e)\big)(m_2) \ar[u]^{C(\al)}_\simeq\\
B\big(\Phi,\e,C^{n_1}(\e)(m_1)\big)\ot \tCE(\oc{m_1}{n_1},\oc{m_2}{n_2}) \ar[d]_= & B\big(\Phi,\e,C^{n_2}(\e)(m_2)\big) \ar[u]_\cong\\
B\big(\Phi,\e,\tCE(\oc{-}{0},\oc{m_1}{n_1})\big)\ot \tCE(\oc{m_1}{n_1},\oc{m_2}{n_2}) \ar[r] & B\big(\Phi,\e,\tCE(\oc{-}{0},\oc{m_2}{n_2})\big) \ar[u]_=
}$$}
The map $\tilde\beta$ is that of Proposition~\ref{beta} and the map $\alpha$ is that of  
Proposition~\ref{BCC}. They are quasi-isomorphisms for any $\Phi$. 
The map $\la$ is that of Proposition~\ref{monoidal}. It is an isomorphism whenever $\Phi$ is split and a quasi-isomorphism whenever
$\Phi$ is h-split.
The bottom horizontal arrow is induced by composition in $\tCE$. 

Consider the composition with a further morphism in $\tCE(\oc{m_2}{n_2},\oc{m_3}{n_3})$. 
Note now that the failure of $\tb\circ C^{n_2}(\al)$ to be the identity 
lies in the non-zero simplicial degrees of $B\big(\Phi,\e,\tCE(\oc{-}{0},\oc{m_2}{n_2})\big)$. As the simplicial degree is constant when applying the composition with $\tCE(\oc{m_2}{n_2},\oc{m_3}{n_3})$, this difference is killed when
we apply $C^{n_3}(\al)$ at the end of the action. Hence, when $\Phi$ is split monoidal, the action is strictly associative.

\smallskip

Let $B^\Si$ denote the quotiented bar construction defined in Section~\ref{Bar}. 
If $\e, i$  and $\Phi$ are symmetric monoidal, then using $B^\Si$ instead of $B$, replacing $\tb$ with its composition with the
quotient map $B\to B^\Si$,  makes 
the diagram above equivariant under the action of $\Si_{m_1}\x\Si_{n_1}$, by Proposition~\ref{Bmon} and ~\ref{monoidal}, and the fact that
this action is given by morphisms of $\tCE$. 

\smallskip

For the reduced version, we need to check that this composition of maps is well-defined. (The map $\tb$ is in fact not well-defined in
that case.) 

Consider 
the action of some $f\in  \tCE(\oc{m_1}{n_1},\oc{m_2}{n_2})$ on some $x\ot l_{\uk}\in C^{n_1}(\Phi)(m_1)$ with $x\ot l_{\uk}$ identified with $0$ in
$\bC^{n_1}(\Phi)(m_1)$, i.e. $$x\ot l_{\uk}=c_\Phi(y\ot u_j)\ot l_{k_1}\ot\dots\ot l_{k_{n_1}}$$ for $y\in \Phi(k-1+m_1)$,  with 
$k=k_1+\dots+k_{n_1}$ and 
$u_j=i(u_j)\in \e(k-1+m_1,k+m_1)$ introducing a unit in the $j$th position for $j\in\{2,\dots,k_1,k_1+2,\dots,k_{n_1}\}$. 

Following the diagram defining the action, we have
\begin{align*} 
(x\ot l_{k_1}\ot\dots\ot l_{k_{n_1}})\ot f & \sta{\tb}{\mapsto} x \ot (id_{k+m_1}\ot l_{k_1}\ot\dots\ot l_{k_{n_1}}) \ot f + \textrm{higher order} \\
& \sta{c_{\tCE}}{\mapsto} x \ot (\sum g \ot l_{k'_1}\ot\dots\ot l_{k'_{n_2}})  + \textrm{higher order} \\
& \sta{\al}{\mapsto} \sum c_\Phi(x \ot g) \ot l_{k'_1}\ot\dots\ot l_{k'_{n_2}}
\end{align*}
for some maps $g\in \e(k+m_1,k'+m_2)$. 
Now $$c_\Phi(x \ot g)=c_\Phi(c_\Phi(y\ot u_j) \ot g)=c_\Phi(y \ot c_{\e}(u_j \ot g))$$
so it is enough to know that $\sum c_{\e}(u_j \ot g)$ is of the form $\sum c_{\e}(g'\ot u_{j'})$ for some $g',j'$ whenever $g$ comes
from a composition as above. 
We have (in abreviated notation)
$$\sum c_{\e}(u_j \ot g)\ot l_{\uk'}=c_{\tCE}(u_j\ot c_{\tCE}((id_{k+m_1} \ot l_{\uk}) \ot f))=c_{\tCE}((u_j \ot l_{\uk}) \ot f)$$
by definition and associativity of composition in $\tCE$. As $u_j\ot l_{\uk}$ is identified with 0 in 
$\bC^{n_1}(\e(k-1+m_1,-))(m_1)=\tCE(\oc{k-1+m_1}{0},\oc{m_1}{n_1})$, we must
have that $c_{\tCE}((u_j \ot l_{\uk}) \ot f)$ is identified with 0 in the reduced Hochschild complex
$\tCE(\oc{k-1+m_1}{0},\oc{m_2}{n_2})$, which means precisely that 
 $\sum c_{\e}(u_j \ot g)$ is of the form $\sum c_{\e}(g'\ot u_{j'})$ as required. 
\end{proof}

The next result says that the action of Theorem~\ref{action}  is also natural in $(\e,\hat \e)$ in the following sense:

\begin{thm}\label{factorization}
Let $(\e,i),(\e',i')$ be  props with (unital) $\Ai$--multiplication and $\hat \e,\hat \e'$ be extensions of $\e,\e'$. 
Suppose that there is a symmetric monoidal functor $\hat{j}:\tCE\to\tCE'$ such that  
$i'=j\circ i:\Ai\to\e\to\e'$ for $j$ the restriction of $\hat j$ to $\e$.  
Then for any (h-)split monoidal functor $\Phi:\e'\to\Comp$, the (homotopy) $\tCE$--action of Theorem~\ref{action} on the pair 
$\big(j^*\Phi,C(j^*\Phi)\big)\cong\big(\Phi,C(\Phi)\big)$ factors through the $\tCE'$--action.

The same holds in the reduced case, replacing $C$ by $\bC$.  
\end{thm}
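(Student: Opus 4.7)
The plan is to verify that the $\tCE$-action constructed in the proof of Theorem~\ref{action} is natural with respect to maps of the pair $(\e, i)$, so that the factorization follows immediately by applying this naturality to $\hat j$.

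First I would observe that since $i' = j \circ i$ and $j$ is the identity on objects, the $\Ai$-algebra underlying $\Phi$ agrees with that underlying $j^*\Phi$. Consequently $C^n(j^*\Phi)(m) = C^n(\Phi)(m)$ as chain complexes for every $n, m$, and similarly for $\bC^n$; so the two actions to be compared have the same source and target once one applies $\hat j$ to the factor of morphisms in $\tCE$.

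The main step is to trace the explicit action diagram from the proof of Theorem~\ref{action} and check that each vertical map admits a naturality square under $\hat j$. The monoidal structure map $\la$ of Proposition~\ref{monoidal} depends only on the monoidal structure of $\Phi$ and the functor $\LL$, and so is formally the same for the two actions. The homotopy inverse $\tilde\beta$ of Proposition~\ref{beta} is natural in $\e$ by construction: the statement there gives that $\tilde\beta$ for $j^*\Phi$ is the one for $\Phi$ over $\e'$ pre-composed with the bar functoriality in $\e$. The quasi-isomorphism $\al$ of Proposition~\ref{BCC} is manifestly natural in $\e$, as is the identification $C^n(B(\Phi,\e,\e))(m) \cong B(\Phi,\e,C^n(\e)(m))$ of Example~\ref{bimodex}. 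Finally, the bottom arrow of the diagram is induced by composition in $\tCE$, and since $\hat j$ is a dg-functor it carries this composition to composition in $\tCE'$; applying $\hat j$ to the factor $C^n(\e)(m) = C\e(\oc{-}{0},\oc{m}{n}) \subset \tCE(\oc{-}{0},\oc{m}{n})$ inside the bar construction converts the $\tCE$-data into the corresponding $\tCE'$-data.

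Assembling these naturality squares produces a commutative rectangle whose left column is the $\tCE$-action on $(j^*\Phi(1), C(j^*\Phi)(0))$ and whose right column is the $\tCE'$-action on $(\Phi(1), C(\Phi)(0))$, linked horizontally by the identity on Hochschild complexes and by $\hat j$ on morphism complexes. This is precisely the factorization asserted. For the reduced version, the only extra point is that $j \circ i = i'$ forces $j \circ i(u_n) = i'(u_n)$ on units, so $j$ (hence $\hat j$) preserves the subcomplexes $U$ used to define $\bC$, and the argument transports verbatim to the reduced diagram. The main obstacle is purely bookkeeping: there are many naturality squares to keep straight, but none of them is individually nontrivial once the naturality of $\tilde\beta$ in $\e$ from Proposition~\ref{beta} is in hand.
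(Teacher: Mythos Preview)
Your proposal is correct and takes essentially the same approach as the paper, which simply states that the result follows directly from the naturality of the maps defining the action. You have spelled out in detail which maps these are and why each is natural in $\e$, whereas the paper leaves this verification entirely to the reader.
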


\begin{proof}
This follows directly from the naturality of the maps defining the action.
\end{proof}

\section{Examples and applications}\label{ex}

In this section, we apply Theorem~\ref{action} to specific categories $\e$. 
In \ref{Cos}, we consider the case $\e=\OO$, the open cobordism category of \ref{OO}. We show that the open-closed category $\OC$ of
section \ref{OCsec} is an extension of $\bC\OO$ in the sense of Section~\ref{Hact}. The application of Theorem~\ref{action} to this extension, stated as
Theorem~\ref{CosThm},  can be interpreted as a reformulation of Costello's Theorem~A (2-3) in \cite{costello07}. 
In \ref{ks_section}, we explain how reading off the action of $\OC$ obtained in the previous section on open field theories
$\Phi:\OO\to\Comp$ recovers the recipe given by Kontsevich-Soibelman in \cite{KS06}.
Sections \ref{twist_section} and \ref{pos} give determinant-twisted and positive boundary versions of Theorem~\ref{CosThm}.

In \ref{strict_section}, we consider the case of strict Frobenius algebras, with $\e=H_0(\OO)$. We show that the category $\SD$ of
Sullivan diagrams defined in \ref{SDsec} is an extension of 
$\bC(H_0(\OO))$.  The application of Theorem~\ref{action} in this case yields Theorem~\ref{strict}, which recovers Theorem 3.3 of \cite{Tradler-Z}, 
giving an action of Sullivan diagrams on the Hochschild complex of strict
Frobenius algebras. Using the projection $\OC\to \SD$, this produces a open-closed field theory though with much of the structure collapsed. 
At the end of the section, we give explicit
formulas for the product, coproduct, and $\Delta$-- (or $B$--)operator on the Hochschild complex in this case. 
In Section~\ref{strings} then gives an application to string topology in characteristic 0 using the models of Lambrechts-Stanley \cite{lambrechts_stanley}.

Finally, sections \ref{Ai} and \ref{AxP} consider the cases of $\e=\Ai^+$ and $\e=\Ass^+\x\pp$ for $\pp$ an operad.

\subsection{Open topological conformal field theories}\label{Cos}

Let $\OO$ be  the open cobordism category defined in \ref{OO}, with $i:\Ai^+\to\OO$ the inclusion of trees into all graphs, 
and $\OC$ the open-closed cobordism category of \ref{OCsec}. We have that $\OO$ is a
subcategory of $\OC$. The following lemma shows that $\OC$ is in fact an extension of the Hochschild core category of $\OO$:

\begin{figure}[h]
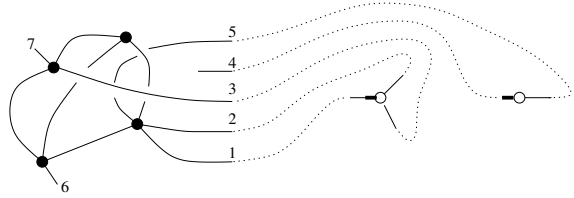

\begin{lpic}{decomp(0.5,0.5)}
\end{lpic}
\caption{Black and white graphs as elements in the iterated Hochschild complex of $\OO$}\label{decomp}
\end{figure}

\begin{lem}\label{OOext}
The category $\OC$ is an extension of $\bC\OO$. 
\end{lem}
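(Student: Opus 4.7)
The plan is to verify the two conditions for $\OC$ to be an extension of $\bC\OO$: (i) an identification of chain complexes
\[
\OC(\oc{m_1}{0},\oc{m_2}{n_2}) \cong \bC^{n_2}(\OO(m_1,-))(m_2) = \bC\OO(\oc{m_1}{0},\oc{m_2}{n_2}),
\]
compatible with the $\OO$-bimodule structure of Proposition~\ref{bimodule}, and (ii) a monoidal inclusion $\bC\OO \hookrightarrow \OC$. Since $\bC\OO$ has trivial morphism complexes when $n_1\neq 0$, once (i) is established the inclusion itself is automatic, and only the monoidal compatibility needs to be checked separately.

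First I would identify the underlying graded modules. Since $\LL = \bigoplus_n \Ai(-,n) \ot L_n$ is free over $\Ai^{op}$, unpacking the iterated Hochschild construction gives
\[
\bC^{n_2}(\OO(m_1,-))(m_2) \cong \bigoplus_{k_1,\ldots,k_{n_2} \ge 1} \OO(m_1,\, k_1+\cdots+k_{n_2}+m_2) \ot L_{k_1} \ot \cdots \ot L_{k_{n_2}}
\]
modulo the submodule generated by unit insertions at the non-start positions of each $L_{k_j}$. To a representative $x \ot l_{k_1} \ot \cdots \ot l_{k_{n_2}}$ I would associate the $\oc{m_1+m_2}{n_2}$-graph obtained from the fat graph $x$ by grouping its first $k_1$ outputs into the half-edges of a first white vertex (the first becoming the start half-edge), the next $k_2$ outputs into a second white vertex, and so on, leaving the last $m_2$ outputs as labeled leaves. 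The key observation is that this bijection of generators descends to the unit quotient precisely because of the $\lfloor\cdot\rfloor$ convention of Section~\ref{chaincpx}: inserting a unit $u_i$ at a non-start position $i \ge 2$ attaches a free leaf at a half-edge of a white vertex of valence at least two, which vanishes under $\lfloor\cdot\rfloor$, while insertion at $i=1$ produces an unlabeled start half-edge, which is allowed in the $\oc{m}{p}$-graph formalism.

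Next I would match the differentials and compositions. The differential on $\bC^{n_2}(\OO(m_1,-))(m_2)$ is a sum of $d_\OO$ applied to $x$ and, for each $j$, the decomposition $d_L(l_{k_j})=\sum_{k<k_j} f_{k_j,k}\ot l_k$ of Section~\ref{annulisec} followed by the action of $f_{k_j,k}$ on $x$. Under the identification these correspond respectively to blow-ups of black vertices and of the $j$-th white vertex of the $\oc{m_1+m_2}{n_2}$-graph, exhausting all contributions to $\hat d$ on $\oc{m}{p}$-graphs; the orientation conventions of Sections~\ref{chaincpx} and \ref{annulisec} agree by construction. The $(\OO^{op},\OO)$-bimodule structure of Proposition~\ref{bimodule} acts by pre- and post-composition on the $\OO$-factor, which on the graph side is exactly gluing to labeled leaves of the $\oc{m}{p}$-graph -- and these are precisely the compositions in $\OC$ with source $\oc{m_1}{0}$, since no white vertices on the source side are available to glue into.

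Finally, the monoidal structure on $\bC\OO$ defined by Proposition~\ref{monoidal} becomes, under the identification, the disjoint union of black-and-white graphs, which is exactly the monoidal structure on $\OC$; this makes the inclusion $\bC\OO \hookrightarrow \OC$ monoidal and completes the proof. The main obstacle is the bookkeeping of the first step, where one must match the combinatorial conventions on $\oc{m}{p}$-graphs (the floor rule, the start half-edge, and orientation conventions for blow-ups) with the algebraic construction of the reduced iterated Hochschild complex (the tensor product over $\Ai$ and the quotient by unit insertions for $i\ge 2$); once this is carefully laid out, the remaining verifications reduce to direct inspection of the definitions.
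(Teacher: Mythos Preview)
Your proposal is correct and follows essentially the same approach as the paper: both arguments establish the bijection of generators by gluing the first $k_1+\cdots+k_{n_2}$ outgoing leaves of an $\OO$--graph onto white vertices $l_{k_1},\dots,l_{k_{n_2}}$, and both observe that the unit quotient in $\bC$ corresponds exactly to the rule that unlabeled leaves in $\oc{m}{p}$--graphs may only occur as start half-edges. Your write-up is more thorough than the paper's --- you explicitly check that the differentials, the $(\OO^{op},\OO)$--bimodule structure, and the monoidal structure transport correctly across the identification, whereas the paper leaves these verifications implicit --- but the underlying argument is the same.
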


\begin{proof}
We need to check that $\OC(\oc{m_1}{0},\oc{m_2}{n})\cong \bC^{n}(\OO(m_1,-))(m_2)$. Now 
$$\bC^{n}(\OO(m_1,-))(m_2)=\oplus_{k_1,\dots,k_{n}\ge 1} \OO(m_1,k_1+\dots+k_n+m_2)/U\ot L_{k_1}\ot\dots\ot L_{k_n}.$$
We describe a bijection between the generators of this complex and the generators of $\OC$: a generator of the complex above is identified with a black and white graph with $n$ white vertices and $m_1+m_2$ leaves by attaching the first $k_1+\dots+k_n$ 
outgoing leaves of generating graphs in $\OO$ to the leaves of the generating graphs $l_{k_1}, \dots,l_{k_n}$ of $L_{k_1}, \dots,L_{k_n}$, respecting
the ordering. (An example of this procedure is shown in Figure~\ref{decomp}.) The fact that
the only units allowed in $\OO$ are at the positions corresponding to the first leaf of an $L_{k_i}$ corresponds to the fact that the only unlabeled
leaves allowed in $\OC$ are those that are start-edges of white vertices. As the graphs
$l_{k_i}$ have a start-leaf, this is a reversible process whose target is exactly the generator of  $\OC(\oc{m_1}{0},\oc{m_2}{n})$. 
\end{proof}

Applying Theorem~\ref{action} to $\e=\OO$ with $\tCE=\OC$ then yields:

\begin{thm}\label{CosThm} 
Let $\Phi:\OO\to\Comp$ be an (h-)split symmetric monoidal functor. 
Then the pair $\big(\bC(\Phi)(0),\Phi(1)\big)$ is a $\Si$--equivariant (homotopy) $\OC$--module. 
\end{thm}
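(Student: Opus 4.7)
The plan is to deduce this as an essentially immediate consequence of the main Theorem~\ref{action}, once we have verified that $\OC$ qualifies as an extension of the reduced Hochschild core category $\bC\OO$ in the precise sense of Section~\ref{Hact}. Since $(\OO, i: \Ai^+ \to \OO)$ is already a PROP with unital $\Ai$--multiplication (the trees-into-graphs inclusion), Theorem~\ref{action} in its reduced form will hand us a $\Si$--equivariant $\OC$--module structure on $(\bC(\Phi)(0), \Phi(1))$ as soon as we know $\OC \supset \bC\OO$ in the required way.

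The content of that verification is Lemma~\ref{OOext}. First I would exhibit the bijection on generators: given a black-and-white graph $G \in \OC(\oc{m_1}{0}, \oc{m_2}{n})$, cut $G$ around each white vertex $v_i$ (which has a distinguished start half-edge) to produce a fat graph in $\OO(m_1, k_1 + \cdots + k_n + m_2)$ together with a factor $l_{k_1} \ot \cdots \ot l_{k_n}$, where $k_i$ is the valence of $v_i$. Conversely, attaching the first $k_1 + \cdots + k_n$ outgoing leaves of a graph in $\OO$ to the inputs of $l_{k_1} \sqcup \cdots \sqcup l_{k_n}$ (in cyclic order) reconstructs the black-and-white graph. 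The only subtlety is the treatment of units: unlabeled start-edges at white vertices correspond exactly to positions where a unit $u$ has been inserted via $i: \Ai^+ \to \OO$, and the reduced Hochschild quotient (modding out by insertion of units except in the first, start-edge, slot of each $l_{k_i}$) matches the convention in $\OC$ that unlabeled leaves appear only as start half-edges of white vertices. Then I would check that the differential on $\bC^n(\OO(m_1,-))(m_2)$ (sum of the $\OO$-differential, the Hochschild $d_L$ differentials applied to each $l_{k_i}$ via $c_\OO$, and the passage to the quotient) corresponds term-for-term under this bijection to the differential on $\OC$; this is where the computation in Section~\ref{chaincpx} involving $\lfloor - \rfloor$ is exactly what makes the correspondence hold.

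Next I would check the structural requirements: that the inclusion is monoidal (immediate from disjoint union on both sides) and that on the full subcategory with $n_1 = 0$ it coincides with the bimodule composition described in Proposition~\ref{bimodule} and Example~\ref{bimodex}; this last point follows because the gluing procedure in $\OC$ (steps (1)--(4) of \ref{OCsec}) restricted to the case where the source has $n_1 = 0$ white vertices is just pre/post-composition in $\OO$ tensored with identities on $L_{k_1} \ot \cdots \ot L_{k_n}$, which is exactly the bimodule structure used to define $\bC\e(\oc{m_1}{0}, \oc{m_2}{n})$. All of this together establishes that $\OC$ is an extension of $\bC\OO$ in the sense of Section~\ref{Hact}.

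With the extension property in hand, the theorem follows by direct application of Theorem~\ref{action} to $\e = \OO$, $\tCE = \OC$, and $\Phi$, using the reduced version since $(\OO, i)$ has unital $\Ai$--multiplication. The symmetric monoidal hypothesis on $\Phi$ (together with that of $\OO$ and $i$) yields the $\Si$--equivariance clause, and the split versus h-split distinction in Theorem~\ref{action} gives the honest versus homotopy $\OC$--module statement. The main obstacle, as usual in this kind of combinatorial verification, is checking that signs agree in the bijection of Lemma~\ref{OOext} at the level of differentials; the orientation conventions for graphs in $\OC$ (cycles of vertex-then-half-edges around white vertices, and the $\lfloor - \rfloor$ sign rule for collapsing leaves at trivalent black vertices) are precisely engineered to match the Koszul signs for $c_\Phi d_L$ in the reduced Hochschild differential, so this should go through, but it is the only place where anything nontrivial actually has to be computed.
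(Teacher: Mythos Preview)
Your proposal is correct and follows exactly the paper's approach: the theorem is stated as an immediate application of Theorem~\ref{action} once Lemma~\ref{OOext} (that $\OC$ is an extension of $\bC\OO$) is in place, and your outline of that lemma---the cut-around-white-vertices bijection, the matching of unlabeled start-edges with the reduced quotient by units, and the compatibility of differentials and composition---mirrors the paper's argument, in fact spelling out a few points (differential and monoidal compatibility) that the paper leaves implicit.
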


As morphisms in $\OO$ models the moduli space of cobordisms between 
({\em open strings}) (see Theorem~\ref{OOmod}), a split monoidal functor $\Phi:\OO\to\Comp$ is a model of an 
 {\em open topological conformal field theory}. Algebraically, such an object is an $\Ai$--version of a Frobenius algebra (see Section~\ref{Frobsec}). 
Similarly,  Theorem~\ref{cos_thm} shows that an equivariant $\OC$--module can be thought of as 
 a model for an open-closed topological conformal field theory. In particular, it includes an action of a chain model of the
moduli space of Riemann surfaces with fixed boundary parametrization on the value of the module at the circle.

We note that the category $\OC$ does \emph{not} include morphisms associated to the disk with one outgoing closed boundary component.  Consequently, algebras over the closed sector of this theory are not necessarily unital (the unit in the algebra would come from the generator of $H_0$ of the moduli of such disks).  That is, algebras over $\OC$ are inherently ``co-positive boundary'' topological conformal field theories.

The above theorem is essentially a reformulation of Costello's theorem \cite[Thm.~A (2-3)]{costello07}, though we 
obtain a more precise description of the action of the open-closed cobordism category. This allows us to recover 
the recipe given by Kontsevich-Soibelman for such an action in Section 11.6 of \cite{KS06}, which we expand on in the next section. 
Restricting to genus 0 surfaces, the statement includes the ``$\Ai$--cyclic Deligne conjecture'', which was also proved in \cite{War12}.

\subsection{Making the action explicit: the Kontsevich-Soibelman recipe} \label{ks_section}

Let $\Phi:\OO \to \Comp$ be a split monoidal functor with $\Phi(1)=A$.  Given $n_1$ Hochschild chains in $A$, $m_1$ elements  $A$ and a graph $\Ga$ in $\OC(\oc{m_1}{n_1},\oc{m_2}{n_2})$, that is: 
$$(a_0^1\ot\dots \ot a^1_{k_1}),\dots, (a_0^{n_1}\ot\dots \ot a^{n_1}_{k_{n_1}}),\  b_1,\dots, b_{m_1} \ \textrm{and}\  \Ga$$
the diagram in the proof of Theorem~\ref{action} gives an explicit way to obtain a sum of a tensor product of $n_2$ Hochschild chains in $A$ and $m_2$ elements of $A$. We apply here the sequence of maps given in the diagram to such elements and show how this recovers the recipe given by Kontsevich and Soibelman in \cite[pp 58--62]{KS06}. Figure~\ref{Aiproducts} below shows two examples of the construction. 

\begin{figure}[h]
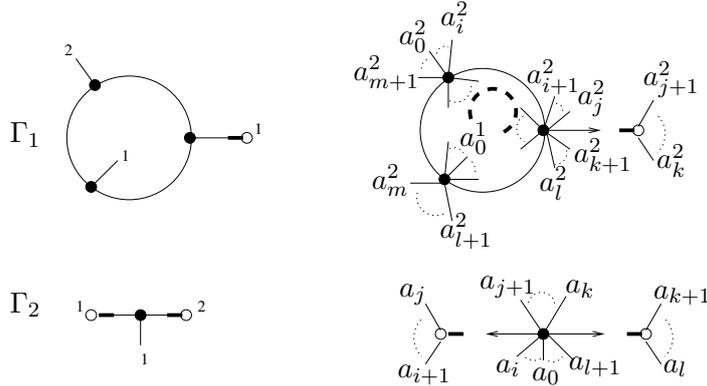

\begin{lpic}{products.Ainfty(0.5,0.5)}
 \lbl[b]{-10,58;$\Ga_1$} 
 \lbl[b]{-10,13;$\Ga_2$} 
\lbl[b]{108,56;$a^1_0$}
\lbl[b]{92,83;$a^2_0$}
\lbl[b]{103,88;$a^2_i$}
\lbl[b]{129,72;$a^2_{i+1}$}
\lbl[b]{139,66;$a^2_j$}
\lbl[b]{160,71;$a^2_{j+1}$}
\lbl[b]{160,50;$a^2_k$}
\lbl[b]{142,51;$a^2_{k+1}$}
\lbl[b]{129,44;$a^2_l$}
\lbl[b]{106,31;$a^2_{l+1}$}
\lbl[b]{86,45;$a^2_m$}
\lbl[b]{85,73;$a^2_{m+1}$}
\lbl[b]{127,-5;$a_0$}
\lbl[b]{117,-3;$a_i$}
\lbl[b]{95,-5;$a_{i+1}$}
\lbl[b]{92,16;$a_{j}$}
\lbl[b]{117,18;$a_{j+1}$}
\lbl[b]{136,18;$a_k$}
\lbl[b]{163,16;$a_{k+1}$}
\lbl[b]{161,-3;$a_l$}
\lbl[b]{140,-3;$a_{l+1}$}
\end{lpic}
\caption{General terms in a chosen product and coproduct induced by the graphs $\Ga_1$ and $\Ga_2$ on the Hochschild complex of $\Ai$--Frobenius algebras. The outputs are to be read along the white vertices after evaluating the operations defined by the graphs, which are elements of $\OO$, at their outputs (denoted by arrows in the figure).}\label{Aiproducts}
\end{figure}

The first map in the diagram  assembles all these terms as 
$$a_0^1\ot\dots \ot a^1_{k_1}\ot\dots\ot a_0^{n_1}\ot \dots \ot a^{n_1}_{k_{n_1}} \ot  b_1\ot\dots\ot b_{m_1}\ot l_{k_1\!+\!1}\ot\dots\ot l_{k_{n_{\!1}}\!+\!1}\ot \Ga$$
The following map, $\tilde\beta$, embeds these into the Hochschild complex of the bar construction. It gives terms of simplicial
degree 0 coming from
the canonical inclusion (adding an identity map in $\OO(k+m_1.k+m_1)$ to the above), plus additional terms of higher simplicial degrees.  
These elements of $C^{n_1}(B(\Phi,\OO,\OO))(m_1)$ are now reinterpreted as lying in 
$B(\Phi,\OO,\OC(-,\oc{m_1}{n_1}))$ just by considering $id_{k+m_1} \ot l_{k_1+1}\ot\dots\ot l_{k_{n_1}+1}$ as a graph with $n_1$ disjoint white
vertices of valences $k_1+1,\dots,k_{n_1}+1$ and $m_1$ additional disjoint leaves.

The bottom horizontal map in the diagram now glues this last graph to $\Ga$. The result of gluing is a sum of graphs $\Ga'$ which are obtained from
$\Ga$ by adding $k_i$ labeled leaves cyclically in all possible  manners on the $i$th closed incoming cycle of $\Ga$ for each $i$.  After
reinterpreting the new graphs as morphisms in $\OO$ attached to $n_2$ white vertices (as in Lemma~\ref{OOext}), the map $\al$---in simplicial degree 0---applies these morphisms of $\OO$ to the elements of $A$ and kills terms of higher simplicial degree. Finally, the resulting chain of $\Phi((k'_1+1)+\dots+(k'_{n_2}+1)+m_2)$ is reinterpreted as $n_2$ Hochschild chains in $A$ (around the white vertices) and $m_2$ elements of $A$. 
The terms of higher simplicial degrees produces by $\tb$ are killed by $\al$.

The appendix explains how to read signs for the operations. 
For  concrete examples of these operations in the case of a strict Frobenius algebra, we refer the reader to the end of section \ref{strict_section}.

\subsection{Twisting by the determinant bundle}\label{twist_section}

For a black and white graph $G$ defining a morphism in $\OC(\oc{m_1}{n_1},\oc{m_2}{n_2})$, we define its outgoing boundary $\del_{out}=\del_{out}G$ to be the union of its $n_2$
white vertices and the endpoints of its $m_2$ outgoing leaves, regarded as a subspace of the corresponding topological graph, also denoted $G$.  
We write $\det(G, \partial_{out})$ for the Euler characteristic of the relative homology
$H_*(G, \partial_{out})$, regarded as a graded abelian group:
$$\det(G, \partial_{out}) := \det(H_*(G, \partial_{out}))= 
\det(H_{0}(G, \partial_{out}))\ot \det(H_1(G, \partial_{out}))^*$$
here considered as a graded $\Z$--module, in degree $-\chi(G, \partial_{out})$.

For $d \in \Z$ , define a {\em $d$--orientation} for $G$ to be
a choice of generator of $$\det(\RR(V\sqcup H))\ot \det(G,\del_{out})^{\ot d}.$$ 
We define new categories $\OO_d$ and $\OC_d$ just like $\OO$ and $\OC$ but replacing the previously defined orientation of graphs by a $d$--orientation. 
So the objects of $\OO_d$ and $\OC_d$ are the same as those of $\OO$ and $\OC$, but the morphisms are now chain complexes generated by 
pairs $(G,o_d(G))$ for $G$ a graph
representing a morphism in $\OO$ or $\OC$ and $o_d(G)$ a $d$--orientation of $G$. 
The boundary of a $d$--oriented graph $(G,o_d(G))$ is the boundary of the graph $G$ as before together with the $d$--orientation induced
as before for its $\det(\RR(V\sqcup H))$--part.  For its $\det(G,\del_{out})$--part, we use the isomorphism between the determinant of $G$ and that of its boundary summand induced by a topological map (unique up to homotopy) contracting the blown-up of vertex with support in a small
neighborhood of that vertex. 

To define composition in $\OO_d$ and $\OC_d$, we need the following.  
Let $G_1,G_2$ be two graphs representing composable morphisms in $\OC$, with 
$(G_2\circ G_1)=\sum G$ their composition in $\OC$. As $G_2$ is a subgraph of each $G$, we have a triple 
$(G,G_2,\del_{out})$. Note also that $H_*(G,G_2)\cong H_*(G_1,\del_{out})$ as collapsing the copy of $G_2$ in any term $G$ of $G_2\circ G_1$ will
exactly recreate $G_1$ with its outer boundary collapsed.  
Given a short exact sequence of free abelian groups $U\hookrightarrow V\twoheadrightarrow W$, choosing a splitting $W\to V$ gives an isomorphism $\det(U)\ot \det(W)\to \det(V)$, and one can check that this isomorphism is independent of the choice of splitting. 
Now splitting the long exact sequence in homology for each triple $(G,G_2,\del_{out})$ into short exact sequences and
then choosing splittings for each of those short exact sequences, one gets an isomorphism
$$\det(G_1, \partial_{out}) \otimes \det(G_2, \partial_{out}) \to \det(G, \partial_{out})$$
for each term in the composition, which is natural and likewise independent of the choices of splitting. 
Explicitly, if $\beta$ denotes the connecting homomorphism in the long exact sequence associated to the  triple $(G,G_2,\del_{out})$, then this isomorphism is the following composition: 
\begin{align*}
& \det(H_{0}(G_1, \partial_{out}))\ot \det(H_1(G_1, \partial_{out}))^*\ot \det(H_{0}(G_2, \partial_{out}))\ot \det(H_1(G_2, \partial_{out}))^* \\
\to & \det(H_{0}(G_1, \partial_{out})) \ot \det(\ker\be)^*\ot \det(\operatorname{Im} \be)^*\ot \det(\operatorname{Im}\be)\ot \det(\operatorname{coker}\be)\ot \det(H_1(G_2, \partial_{out}))^*\\
\to & \det(H_{0}(G_1, \partial_{out}))\ot \det(\operatorname{coker}\be) \ot \det(\ker\be)^*\ot \det(H_1(G_2, \partial_{out}))^*\\
\to & \det(H_{0}(G, \partial_{out}))\ot \det(H_1(G, \partial_{out}))^*
\end{align*}
It is not difficult to check that this isomorphism is associative. 
One then can define composition in $\OO_d$ or $\OC_d$ as composition in $\OO$ or $\OC$, tensored with the $d^{\rm th}$ power of this
isomorphism. More precisely, 
the composition of $d$--oriented graphs $(G_1,o_d(G_1))$ and
$(G_2,o_d(G_2))$ 
is by the same gluing as before on the graphs, and via the composition 
$$\begin{array}{ll}& \det(\RR(V_1\sqcup H_1))\ot \det(G_1,\del_{out})^{\ot d}\ot \det(\RR(V_2\sqcup H_2))\ot \det(G_2,\del_{out})^{\ot d}\\
\to & \det(\RR(V_1\sqcup H_1))\ot \det(\RR(V_2\sqcup H_2))\ot \det(G_1,\del_{out})^{\ot d}\ot \det(G_2,\del_{out})^{\ot d}\\
\to &\det(\RR(V_1\sqcup H_1\sqcup V_2\sqcup H_2))\ot \det(G,\del_{out})^{\ot d}\end{array}$$
for each term $G$ in $G_2\circ G_1$, 
where the first arrow introduces a sign 
 $(-1)^{d|G_2|\chi(G_1,\del_{out})}$ and the second map is juxtaposition on the first factors as
in $\OO$ and $\OC$, and the $d$--power 
the above isomorphism on the last factors. 

The resulting categories $\OO_d$ and $\OC_d$ are symmetric monoidal.  

Note that $\OO_d$ admits a symmetric monoidal functor $i: \Ai \to \OO_d$, since  any
graph $G \in \Ai$  is a union of trees, each with exactly one outgoing boundary point, so $\det(G, \partial_{out})$ is of degree 0, with a canonical
generator, compatible under composition in $\Ai$.  Thus we are entitled to form the Hochschild complex of any functor $\Phi: \OO_d \to \Comp$.  
Lemma~\ref{OOext} extends to show that $\OC_d$ is an extension of $\bC\OO_d$, as the isomorphism 
$\bC^{n}(\OO(m_1,-))(m_2)\sta{\cong}{\rar}\OC(\oc{m_1}{0},\oc{m_2}{n})$ takes a graph in $\OO(m_1,k_1+\dots+k_n+m_2)$ to a graph with canonically isomorphic determinant in $\OC(\oc{m_1}{0},\oc{m_2}{n})$ as the added $l_i$'s are part of the outgoing boundary of the graph in $\OC$, glued to outgoing leaves of the graph in $\OO$. 
Hence by Theorem \ref{action}, we have 

\begin{cor}\label{detCor}

Let $\Phi:\OO_d\to\Comp$ be an (h-)split symmetric monoidal functor. 
Then the pair $\big(\bC(\Phi)(0),\Phi(1)\big)$ is a $\Si$--equivariant (homotopy) $\OC_d$--module. 

\end{cor}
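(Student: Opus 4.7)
The plan is to follow the proof of Theorem \ref{CosThm} almost verbatim, adapted to the twisted setting. Specifically, I would establish the twisted analogue of Lemma \ref{OOext}, namely that $\OC_d$ is an extension of $\bC\OO_d$ as a symmetric monoidal category with $\Ai$-multiplication, and then invoke Theorem \ref{action} with $\e=\OO_d$ and $\tCE=\OC_d$. Since the text already supplies the functor $i:\Ai\to\OO_d$ (trees have canonical $d$-orientations because $\det(G,\partial_{out})$ is concentrated in degree $0$ with a preferred generator when $G$ is a disjoint union of rooted trees), the pair $(\OO_d, i)$ is a PROP with unital $\Ai$-multiplication, so both the unreduced and reduced versions of Theorem \ref{action} are available and will give the claimed action on $\bC(\Phi)(0)$.

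For the extension property, I would exhibit an isomorphism $\OC_d(\oc{m_1}{0},\oc{m_2}{n}) \cong \bC^n(\OO_d(m_1,-))(m_2)$ by starting from the underlying bijection of Lemma \ref{OOext}: a generator on the right is a graph $G_0 \in \OO(m_1,k+m_2)$ (where $k=k_1+\dots+k_n$) together with the graphs $l_{k_1},\dots,l_{k_n}$, and attaching the first $k$ outgoing leaves of $G_0$ to the leaves of the $l_{k_i}$ yields a graph $G \in \OC(\oc{m_1}{0},\oc{m_2}{n})$. For the twisted enhancement, the key geometric fact is that each $l_{k_i}$ is a single white vertex, hence collapsing the $n$ white vertices of $G$ to points simply recreates $G_0/\partial_{out}(G_0)$ after identifying the appropriate leaves. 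This produces a canonical isomorphism $\det(G,\partial_{out}) \cong \det(G_0,\partial_{out}(G_0))$, so $d$-orientations on $G$ correspond bijectively and naturally to $d$-orientations on $G_0$ (which is exactly what the Hochschild side records, since the $l_{k_i}$ carry trivial determinant data).

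The main technical obstacle will be verifying that this bijection intertwines differentials and the bimodule structure used to iterate the Hochschild construction. On the Hochschild side, the differential decomposes as $d_\Phi + d_L$, where $d_\Phi$ uses $d$-oriented blow-ups inside $\OO_d$ and $d_L$ uses composition in $\OO_d$ (the bimodule action), which in turn invokes the isomorphism $\det(G_1,\partial_{out}) \otimes \det(G_2,\partial_{out}) \to \det(G,\partial_{out})$. On the $\OC_d$ side, the differential blows up edges of the assembled graph, with $d$-orientations transforming by the same composition isomorphism restricted to the two sub-pieces. Associativity and naturality of the determinant composition (cited from \cite[Sect.~3]{costello07} and \cite{MTW10}) guarantee these prescriptions coincide, and the sign $(-1)^{d|G_2|\chi(G_1,\partial_{out})}$ built into composition in $\OC_d$ matches the Koszul sign arising from moving the $\det(G_2,\partial_{out})^{\otimes d}$ past the internal factors of $G_1$ in the Hochschild bimodule structure. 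Once this compatibility is unpacked and recorded, the extension property holds and Theorem \ref{action} applied to $(\OO_d,i)$ with $\tCE=\OC_d$ delivers the $\Si$-equivariant (homotopy) $\OC_d$-module structure on $(\bC(\Phi)(0),\Phi(1))$, completing the proof.
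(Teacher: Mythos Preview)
Your proposal is correct and follows exactly the approach the paper takes: the paper simply asserts that Lemma~\ref{OOext} extends to show $\OC_d$ is an extension of $\bC\OO_d$, and then invokes Theorem~\ref{action}. You have supplied more detail than the paper does on why the determinant data is compatible with the bijection of Lemma~\ref{OOext}, but the strategy is identical.
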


\subsection{Positive boundary variations}\label{pos}

Recall from  \ref{posalg} the positive boundary subcategory $\OO^b \subseteq \OO$ whose morphisms are those satisfying that their
underlying surface has at least one outgoing boundary in each component.  
Define now $\OC^b \subseteq \OC$ to be the subcategory consisting of graphs with at least one outgoing boundary in each
component.  Recalling that the closed-to-closed morphisms of $\OC$ satisfy a ``co-positive''
boundary condition, namely that every component of the underlying surface has at least one incoming or free boundary, we have that the
closed-to-closed part of $\OC^b$ satisfies both the positive and free/co-positive boundary conditions.

\begin{lem}\label{OObext}
The category $\OC^b$ is an extension of $\bC(\OO^b)$.  
\end{lem}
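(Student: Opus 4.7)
The plan is to adapt the proof of Lemma~\ref{OOext} by checking that everything restricts compatibly to the positive boundary subcategories. There are three things to verify: that $\OC^b$ is actually a (monoidal) subcategory of $\OC$; that $\bC^n(\OO^b(m_1,-))(m_2)$ is a well-defined Hochschild construction; and that the bijection of Lemma~\ref{OOext} between generators of $\OC(\oc{m_1}{0},\oc{m_2}{n})$ and $\bC^n(\OO(m_1,-))(m_2)$ restricts to a bijection between the positive boundary sub-complexes.

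First I would check that composition in $\OC$ preserves the positive boundary condition. Given $G_1\in\OC^b(\oc{m_1}{n_1},\oc{m_2}{n_2})$ and $G_2\in\OC^b(\oc{m_2}{n_2},\oc{m_3}{n_3})$, every outgoing boundary of $G_1$ (i.e., every white vertex and every outgoing leaf of $G_1$) is consumed during the gluing that defines $G_2\circ G_1$, so each component of any term $G$ of the composition must contain at least one component of $G_2$. Since each component of $G_2$ carries either a white vertex or an outgoing leaf, which survive in $G$, each component of $G$ has an outgoing boundary, so $G\in\OC^b$. Disjoint union trivially preserves positive boundary, so $\OC^b$ is a symmetric monoidal subcategory of $\OC$. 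Similarly, $\OO^b\subseteq\OO$ contains $\Ai^+$ (all trees are positive boundary), so $(\OO^b,i)$ is a PROP with unital $\Ai$-multiplication and the iterated reduced Hochschild construction $\bC^n(\OO^b(m_1,-))(m_2)$ is well-defined.

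The main step is the bijection. I would take the same explicit correspondence as in Lemma~\ref{OOext}: a generator $f\ot l_{k_1}\ot\cdots\ot l_{k_n}$ of $\bC^n(\OO(m_1,-))(m_2)$ is sent to the black and white graph $G$ obtained by attaching the first $k_1+\cdots+k_n$ outgoing leaves of $f$ to the leaves of $l_{k_1},\dots,l_{k_n}$ in order. I claim that $G\in\OC^b$ if and only if $f\in\OO^b$. For the forward direction, if every component of $f$ has an outgoing leaf, then under the attachment that leaf either remains an outgoing leaf of $G$ (among the last $m_2$) or is incorporated into a white vertex of $G$, and either way witnesses an outgoing boundary in the component of $G$ containing it. Conversely, if some component $C$ of $f$ had no outgoing leaf at all, then $C$ would persist in $G$ as an isolated component without any outgoing leaf or white vertex attached, contradicting $G\in\OC^b$.

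This bijection is clearly compatible with differentials and with the bimodule structure (since composition in $\OO^b$ is the restriction of composition in $\OO$), and the inclusion $\OO^b\inc\OC^b$ on the $n=0$ sector gives the required subcategory condition. I don't expect any real obstacle here; the only subtlety is simply being careful that both directions of the positive boundary equivalence hold, which they do because an outgoing leaf of $f$ that gets glued to a white vertex still produces an outgoing boundary in $G$, while an isolated bad component of $f$ remains isolated and bad in $G$.
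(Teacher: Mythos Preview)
Your proposal is correct and follows essentially the same approach as the paper: use the explicit bijection of Lemma~\ref{OOext} and check that the positive boundary condition on the $\OC$ side corresponds exactly to the positive boundary condition on the $\OO$ side. The paper's proof is a one-sentence version of your third paragraph; your additional verification that $\OC^b$ is closed under composition (which the paper simply asserts in defining $\OC^b$ as a subcategory) and that every component of $G$ meets a component of $f$ (since each $l_{k_i}$ has $k_i\ge 1$) are welcome details but not new ideas.
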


\begin{proof}
Using the bijection in Lemma \ref{OOext}, we see that 
$$\bC^{n}(\OO^b(m_1,-))(m_2)=\oplus_{k_1,\dots,k_{n}\ge 1} \OO^b(m_1,k_1+\dots+k_n+m_2)/U\ot L_{k_1}\ot\dots\ot L_{k_n}$$
identifies with the subcomplex $\OC^b(\oc{m_1}{0},\oc{m_2}{n})$ of $\OC(\oc{m_1}{0},\oc{m_2}{n})$ as outgoing closed boundary components in $\OC$
correspond to non-empty outgoing boundary in each component of $\OO$ attached to it in the above decomposition.
\end{proof}

Applying Theorem~\ref{action} to $\e=\OO^b$ and $\tCE=\OC^b$ immediately gives

\begin{cor}\label{posCor}

If $\Phi:\OO^b \to \Comp$ is an (h-)split symmetric monoidal functor, then the pair $\big(\bC(\Phi)(0),\Phi(1)\big)$ is a $\Si$--equivariant (homotopy) $\OC^b$--module. 

\end{cor}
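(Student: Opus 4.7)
The plan is to deduce the corollary directly from Theorem~\ref{action} by taking $\e = \OO^b$ and $\tCE = \OC^b$. To invoke the theorem I must check two things: that $(\OO^b, i)$ is a PROP with unital $\Ai$--multiplication, and that $\OC^b$ is an extension of $\bC(\OO^b)$. The first point is immediate: the inclusion $i:\Ai^+ \to \OO$ factors through $\OO^b$, because every morphism in $\Ai^+$ is a disjoint union of trees each with exactly one outgoing leaf, so the positive-boundary condition is satisfied componentwise. (This is explicitly noted in Section~\ref{posalg}.)

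The second point is precisely the content of Lemma~\ref{OObext}, which identifies $\bC^n(\OO^b(m_1,-))(m_2)$ with the subcomplex $\OC^b(\oc{m_1}{0},\oc{m_2}{n})$ of $\OC(\oc{m_1}{0},\oc{m_2}{n})$. To realize $\OC^b$ as a monoidal extension of $\bC(\OO^b)$, I would also note that the monoidal structure of $\OC$, given by disjoint union of graphs, preserves the positive-boundary condition componentwise, so it restricts to $\OC^b$ and is compatible with the inclusion of $\bC(\OO^b)$ under the identification of Lemma~\ref{OObext}.

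Having verified these hypotheses, Theorem~\ref{action} immediately produces the action maps $\ga$ and, composing with (a choice of) inverse to $\la$, the required $\Si$--equivariant (homotopy) $\OC^b$--module structure on $(\bC(\Phi)(0),\Phi(1))$. The only concern one might raise is whether some step in the construction of $\ga$ in the proof of Theorem~\ref{action} could leave the subcategory $\OC^b$; but all operations involved (composition in $\OC$, the bar construction and the maps $\tb$, $\al$, $\la$) are natural with respect to the inclusion $\OC^b \hookrightarrow \OC$ and are built from fat-graph operations that preserve the condition of having at least one outgoing boundary in each component. So there is no real obstacle, and the corollary follows formally once Lemma~\ref{OObext} is in hand.
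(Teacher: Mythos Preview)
Your proof is correct and follows exactly the paper's approach: apply Theorem~\ref{action} with $\e=\OO^b$ and $\tCE=\OC^b$, using Lemma~\ref{OObext} for the extension hypothesis. The additional checks you include (that $\Ai^+$ lands in $\OO^b$, that the monoidal structure restricts, and that the construction stays in $\OC^b$) are more explicit than the paper's one-line derivation but add nothing beyond the intended argument.
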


\subsection{Strict Frobenius algebras and Sullivan diagrams} \label{strict_section}

Recall from \ref{Frobsec} the category $H_0(\OO)$, whose morphisms are the 0-th homology groups of those of $\OO$, and which has the property that
$H_0(\OO)$--algebras are exactly  (strict) symmetric Frobenius algebras. We consider also the shifted version $H_{bot}(\OO_d)$ whose morphisms are
the bottom homology groups in each component of the morphisms of the category $\OO_d$ of Section~\ref{twist_section}, i.e. 
$$H_{bot}(\OO_d)=\coprod_{S\in\pi_0(\OO(n,m))}H_{-d\cdot\chi(S,\del_{out})}(\OO_{d,S}(n,m)).$$ 
We call $H_{bot}(\OO_d)$--algebras {\em dimension $d$ Frobenius algebras}. 

We show in this section that the category of Sullivan diagrams $\SD$ of Section~\ref{SDsec} is an extension of $C(H_0(\OO))$, and a shifted version
$\SD_d$ of $\SD$ is an extension of $C(H_{bot}(\OO_d))$, which gives the action of Sullivan diagrams on the Hochschild complex of Frobenius
algebras stated in Theorem~\ref{strict}. We then give explicit formulas for the product, coproduct and $\De$--operator on the Hochschild 
complex of Frobenius algebras coming out of our method, and check in Proposition~\ref{std_BV_prop} that, over a field, the Batalin-Vilkovisky coalgebra structure given by the
coproduct and $\De$--operator on Hochschild homology is dual to the Batalin-Vilkovisky structure on the Hochschild cohomology of the algebra defined
using the cup product and the dual to Connes' operator $B$. 

\smallskip

As already remarked in \ref{SDsec}, 
the components of the category $\SD$ of Sullivan diagrams are in 1-1 correspondence with those of $\OC$, namely the topological types of open-closed
cobordisms. For $S$ such a topological type, we denote by $\SD_S(\oc{m_1}{n_1},\oc{m_2}{n_2})$ the corresponding component. 
We define $\SD_d$ to be the dg-category obtained from $\SD$ by shifting the degree of the component $\SD_S(\oc{m_1}{n_1},\oc{m_2}{n_2})$ by 
$d.\chi(S,\del_{out})$; note that the shifts in degree are consistent with composition.
(The category $\SD_d$ is a  quotient of the category $\OC_d$ of \ref{twist_section}.)

\begin{lem} \label{SD_lem}
The category $\SD$ is an extension of $\bC(H_0(\OO))$ and more generally, $\SD_d$ is an extension of $\bC(H_{bot}(\OO_d))$. 
\end{lem}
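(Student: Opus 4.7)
The plan is to match both sides via the isomorphism of Lemma~\ref{OOext}, passed to appropriate quotients. First I will observe that $H_0(\OO)(m_1,m_2)$ admits a description parallel to Theorem~\ref{SDequivthm}: it is the quotient of $\OO(m_1,m_2)$ by the subcomplex $J$ spanned by fat graphs with at least one black vertex of valence $\geq 4$ together with boundaries of such. This follows from the degree formula $\deg(G)=\sum_{v\in V_b}(|v|-3)$: the quotient $\OO/J$ is concentrated in degree $0$ (trivalent graphs), and modding out by boundaries of graphs with a single $4$-valent vertex realizes $H_0$.

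Next, by Lemma~\ref{OOext}, I have the isomorphism $\OC(\oc{m_1}{0},\oc{m_2}{n}) \cong \bC^n(\OO(m_1,-))(m_2)$ sending a black-and-white graph to its underlying forest attached to $l_{k_1}\otimes\cdots\otimes l_{k_n}$. Under this bijection the black vertices of a black-and-white graph are exactly the black vertices of its underlying forest, so the $\OC$-subcomplex of graphs with a high-valence black vertex (and boundaries) corresponds to $\bC^n(J(m_1,-))(m_2)$. Passing to quotients will yield an isomorphism of graded modules $\SD(\oc{m_1}{0},\oc{m_2}{n}) \cong \bC^n(H_0(\OO)(m_1,-))(m_2)$. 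The key step, to which I devote the most attention, is checking that the differentials agree: on the $\SD$-side, since black vertices must stay trivalent, only blow-ups of a white vertex of valence $n$ into a white of valence $n-1$ and an adjacent trivalent black survive; on the Hochschild side, $H_0(\OO)$ has zero internal differential, so only $c_\Phi d_L$ contributes, and using that $d_L(l_n)=\sum f_{n,k}\otimes l_k$ with each $f_{n,k}$ a single corolla $m_r$ ($r=n-k+1$) together with identities, and that $i\colon\Ai\to H_0(\OO)$ sends $m_r$ to $0$ for $r\geq 3$ (since $m_r$ has positive degree $r-2$ in $\OO$ and hence is killed in $H_0$), only the $k=n-1$, $r=2$ terms survive, matching the $\SD$-differential term for term. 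Compositions and monoidal structures are inherited from $\OC$ and $\bC\OO$ (via Proposition~\ref{bimodule}) and descend compatibly to the quotients.

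For the twisted variant $\SD_d$ and $H_{bot}(\OO_d)$, the same argument applies: $H_{bot}(\OO_d)$ is the quotient of $\OO_d$ by the ideal of $d$-oriented graphs with a high-valence black vertex and boundaries, the bottom degree $-d\chi(S,\partial_{out})$ in each component being realized by trivalent graphs, while $\SD_d$ is by definition the corresponding shift of $\SD$. The hardest part throughout will be the sign bookkeeping when identifying $d_L$-terms with $\SD$-blow-ups, and in the twisted case additionally tracking the determinant factor $\det(G,\partial_{out})^{\otimes d}$ through the composition isomorphism of Lemma~\ref{OOext}; however, since the gluing rule of~\ref{twist_section} is associative and orientations are induced pointwise from the underlying fat-graph data, this is formal.
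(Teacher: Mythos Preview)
Your proposal is correct and follows essentially the same approach as the paper: both pass the isomorphism of Lemma~\ref{OOext} to the quotient of $\oc{m}{p}$--graphs by high-valence black vertices and their boundaries, and invoke Theorem~\ref{SDequivthm} to identify the result with $\SD$. Your treatment is in fact more thorough than the paper's, which leaves the differential check implicit; your observation that only the $r=2$ terms of $d_L$ survive in $H_0(\OO)$ (since $m_r$ has degree $r-2$) is exactly the point, and the twisted case is handled identically in both.
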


\begin{proof}
We have 
$$\bC^{n}(H_0(\OO)(m_1,-))(m_2)=\oplus_{k_1,\dots,k_{n}\ge 1} H_0(\OO(m_1,k_1+\dots+k_n+m_2))/U\ot L_{k_1}\ot\dots\ot L_{k_n}$$
whose generators, by gluing the graphs in $\OO$ to the white vertices in the $L_{k_i}$'s, 
are black and white graphs with trivalent black vertices modulo the equivalence relation coming 1-cells in $\OO(m_1,k_1+\dots+k_n+m_2)$, i.e. from
blowing up 4-valent black vertices. But this corresponds exactly to the description of 
$\SD(\oc{m_1}{0},\oc{m_2}{n})$ in terms of quotient of black and white graphs given by
Theorem~\ref{SDequivthm}. 

Replacing $H_0(\OO)$ by $H_{bot}(\OO_d)$ in the above, we get $\SD_d(\oc{m_1}{0},\oc{m_2}{n})$ instead as the shifts in degree are the same. 
\end{proof}

For $\e=H_{bot}(\OO_d)$, taking $\tbCE=\SD_d$, Theorem~\ref{action} thus gives

\begin{thm}\label{strict}
Let $A$ be a symmetric Frobenius algebra of dimension $d$, then the pair $(\bC(A),A)$ is a $\Si$--equivariant $\SD_d$--module, where
$\bC(A)$ denotes the reduced Hochschild complex of the algebra $A$. 
\end{thm}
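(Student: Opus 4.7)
The plan is to deduce Theorem~\ref{strict} directly from Theorem~\ref{action} applied to the category $\e = H_{bot}(\OO_d)$, using Lemma~\ref{SD_lem} to supply the required extension. The three ingredients to verify are: (i) that $(H_{bot}(\OO_d), i)$ is a PROP with unital $\Ai$--multiplication in the sense of Section~\ref{Aialg}; (ii) that a symmetric Frobenius algebra of dimension $d$ is the same datum as a split symmetric monoidal functor $\Phi: H_{bot}(\OO_d)\to \Comp$ with $\Phi(1)=A$; and (iii) that under this dictionary, $\bC(\Phi)(0)$ agrees with the reduced Hochschild complex $\bC(A)$ of the underlying dg-algebra.

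First, I would observe that the generators $m_k \in \Ai^+$ lie in degree $0$ of the corresponding components of $\OO$, so the symmetric monoidal functor $i:\Ai^+\to\OO$ descends through the quotient $\OO \to H_{bot}(\OO_d)$: the degree shifts in $\OC_d$ by $d\cdot \chi(S,\partial_{out})$ vanish on tree components with a single outgoing leaf, and so $i: \Ai^+ \to H_{bot}(\OO_d)$ is a well-defined symmetric monoidal functor, identity on objects. This endows $H_{bot}(\OO_d)$ with a unital $\Ai$--multiplication. For (ii), the identification uses the description of symmetric Frobenius algebras as split symmetric monoidal functors from $H_0(\OO)$ (recalled in Section~\ref{Frobsec}, citing \cite[Cor.~4.5]{LauPfe}), together with the observation that passing from $H_0(\OO)$ to $H_{bot}(\OO_d)$ simply regrades the pairing/copairing by $d$, which matches our convention for a Frobenius algebra of dimension $d$. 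For (iii), by Definition~\ref{CbDef} the complex $\bC(\Phi)(0)$ is $\bigoplus_{n\ge 1}\Phi(n)/U_n \ot L_n$, and since $\Phi$ is split monoidal with $\Phi(1)=A$ and $\Phi(i(u))$ the unit of $A$, this is precisely the normalized (reduced) Hochschild complex of $A$ in the usual sense.

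With these identifications in place, Lemma~\ref{SD_lem} provides the extension $\SD_d$ of $\bC(H_{bot}(\OO_d)) = \bC\e$. Since $\Phi$, $\e$, $i$, and the monoidal structure are all symmetric monoidal and $\Phi$ is split, the hypotheses of the symmetric monoidal, split, reduced version of Theorem~\ref{action} are satisfied. The conclusion of that theorem, applied verbatim, yields a strict $\Si$--equivariant action
\[
\bC(A)^{\ot n_1}\ot A^{\ot m_1}\ot \SD_d(\oc{m_1}{n_1},\oc{m_2}{n_2}) \rar \bC(A)^{\ot n_2}\ot A^{\ot m_2}
\]
compatible with composition in $\SD_d$, which is exactly the statement that $(\bC(A),A)$ is a $\Si$--equivariant $\SD_d$--module.

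The only genuine point requiring care is (i), the verification that the $\Ai^+$--structure factors through $H_{bot}(\OO_d)$ in a way compatible with the grading shifts, since the reduced theorem demands the unital version. This comes down to checking that corollas $m_k:k\to 1$ and the unit $u:0\to 1$ have trivial relative Euler characteristic with respect to $\partial_{out}$ (both $G$ and $\partial_{out}$ deformation retract to the unique outgoing leaf), so the twist by $\det(G,\partial_{out})^{\ot d}$ contributes nothing and the inclusion $i$ lands in degree $0$ as required. Once this bookkeeping is settled, the rest of the proof is a direct invocation of Theorem~\ref{action} and Lemma~\ref{SD_lem}.
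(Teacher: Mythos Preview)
Your proposal is correct and follows exactly the paper's approach: the paper deduces Theorem~\ref{strict} as an immediate application of Theorem~\ref{action} with $\e = H_{bot}(\OO_d)$ and $\tbCE = \SD_d$, the latter being supplied by Lemma~\ref{SD_lem}. The verifications (i)--(iii) you spell out are implicit in the paper's setup (Sections~\ref{Aialg}, \ref{Frobsec}, and the remark after Definition~\ref{CDef}), so your write-up is if anything more explicit than the original.
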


As a differential graded algebra with a non-degenerate inner product defines a symmetric Frobenius algebra, this recovers Theorem
3.3 of \cite{Tradler-Z} after dualization.  (See also \cite{TraZei07} which considers the open part as well as the closed part).

\smallskip

Using Theorem~\ref{factorization}, a consequence of Proposition~\ref{strictvanish} and the above theorem is the following:  

\begin{cor} \label{factor_action_cor}
For strict symmetric Frobenius algebras $A$, the TCFT structure on $\bC_*(A)$ defined by Costello and Kontsevich-Soibelman factors through an action of Sullivan diagrams. In particular, stable classes in the homology of the moduli space act trivially. 
\end{cor}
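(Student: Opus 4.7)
The plan is to deduce both assertions as direct consequences of the naturality result Theorem~\ref{factorization}, combined with Proposition~\ref{strictvanish}.

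First I would reinterpret the Costello--Kontsevich--Soibelman TCFT structure on $(\bC(A),A)$, for $A$ a strict symmetric Frobenius algebra of dimension $d$, as the $\OC_d$-action from Corollary~\ref{detCor} applied to the pullback $q^*\Phi:\OO_d\to\Comp$, where $\Phi:H_{bot}(\OO_d)\to\Comp$ is the split symmetric monoidal functor packaging the strict Frobenius structure and $q:\OO_d\to H_{bot}(\OO_d)$ is the canonical projection onto bottom-degree homology. Meanwhile, the action of Sullivan diagrams on the same pair comes from Theorem~\ref{strict} applied to $\Phi$ directly, producing an $\SD_d$-module structure on $(\bC(A),A)$.

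Next I would invoke Theorem~\ref{factorization} with $(\e,i)=(\OO_d,i)$, $(\e',i')=(H_{bot}(\OO_d),q\circ i)$, $\tCE=\OC_d$, $\tCE'=\SD_d$, and $\hat{j}=\pi:\OC_d\to\SD_d$ the quotient functor defining $\SD_d$. The compatibility $i'=j\circ i$ of the $\Ai$-multiplications is then automatic. The step I expect to require the most care is verifying that the restriction of $\pi$ to the open-to-open subcategory $\OO_d\subseteq\OC_d$ agrees with $q$; this follows from the extension property in Lemma~\ref{SD_lem}, which identifies $\SD_d(\oc{m_1}{0},\oc{m_2}{0})$ with $H_{bot}(\OO_d)(m_1,m_2)$ because, in the absence of admissible cycles, the Sullivan diagram complex reduces to degree-zero fat graphs modulo higher-valence blow-ups. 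Once this identification is in place, Theorem~\ref{factorization} immediately delivers the desired factorization of the $\OC_d$-action on $(A,\bC(A))$ through the $\SD_d$-action of Theorem~\ref{strict}.

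The last sentence of the corollary is then an immediate consequence: by Proposition~\ref{strictvanish}, any stable class of positive degree in $H_*(\OC_d)$ lies in the kernel of $H_*(\pi)$, so its image in $H_*(\SD_d)$ vanishes; since the action on Hochschild homology has just been shown to factor through $H_*(\pi)$, such classes must act trivially.
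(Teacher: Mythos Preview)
Your proposal is correct and follows essentially the same approach as the paper: the paper states the corollary as a direct consequence of Theorem~\ref{factorization} (applied to the projection $\OC_d\to\SD_d$ restricting to $\OO_d\to H_{bot}(\OO_d)$), Theorem~\ref{strict}, and Proposition~\ref{strictvanish}, without giving further details. Your write-up simply makes explicit the choices of $\e,\e',\tCE,\tCE',\hat j$ in Theorem~\ref{factorization} and the verification that $\pi|_{\OO_d}$ is the quotient $q$, which is exactly what the paper's one-line justification is implicitly invoking.
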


This results puts together the work of Costello and Kontsevich-Soibelman with that of Tradler-Zeinalian: we have shown that Costello's construction (which translates to that of Kontsevich-Soibelman when made explicit) of an action of moduli space on the Hochschild homology of a strict Frobenius algebra factors through an action of the complex of Sullivan diagrams as constructed by Tradler-Zeinalian \cite[Thm 3.3]{Tradler-Z}.

\medskip

We are also now able to give a proof of Proposition~\ref{non-vanish} which says that the projection map from $\OC$ to $\SD$, on the component of the multi-legged pair of pants with one incoming and $p$ outgoing boundary components, is a quasi-isomorphism.

\begin{proof}[Proof of Proposition~\ref{non-vanish}]
By Lemma~\ref{OOext}, we have that $\OC(\oc{0}{1},\oc{0}{p})\cong\OC(\oc{1}{0},\oc{0}{p})$ is the iterated Hochschild complex
$\bC^p(\OO(1,-)(0)$. Likewise, by Lemma~\ref{SD_lem}, we have that $\SD(\oc{0}{1},\oc{0}{p})\cong \SD(\oc{1}{0},\oc{0}{p})$ is the
iterated Hochschild complex $\bC^p(H_0\OO(1,-)(0)$. 
We have $\bC^p(\OO(1,-)(0)=\oplus_{k_1,\dots,k_p}\OO(1,k_1+\dots+k_p)\ot_{\Ai}(L_{k_1}\ot \dots\ot L_{k_p})$ and 
 the components of this complex corresponding to a surface of genus 0 with $p+1$ boundary components 
is the subcomplex $\oplus_{k_1,\dots,k_p}A(k_1,\dots,k_p)\ot_{\Ai}(L_{k_1}\ot \dots\ot L_{k_p})$ with $A(k_1,\dots,k_p)\subset \OO(1,k_1+\dots+k_p)$ the subcomplex of forests with the property that, once glued to $l_{k_1},\dots,l_{k_p}$, they form a tree (with $p$ white vertices). This is a condition on the labeling of the leaves of the forest. For  $\bC^p(H_0\OO(1,-)(0)$, we have a similar subcomplex $B(k_1,\dots,k_p)\subset H_0\OO(1,k_1+\dots+k_p)$ giving the corresponding component. Now the projection $\OO\to H_0\OO$ induces a quasi-isomorphism $A(k_1,\dots,k_p) \to B(k_1,\dots,k_p)$ for each $k_1,\dots,k_p$. Indeed, the latter complex is a free abelian group on its graph generators which are all in degree 0, and for each such generator, which is a union of trees, there is in $A$ a product of the corresponding cellular complex of the associahedra, which are contractible. 
Hence the map we are interested in can be described as 
$$\oplus_{k_1,\dots,k_p}A(k_1,\dots,k_p)\ot_{\Ai}(L_{k_1}\ot \dots\ot L_{k_p})\rar \oplus_{k_1,\dots,k_p}B(k_1,\dots,k_p)\ot_{\Ai}(L_{k_1}\ot \dots\ot L_{k_p})$$
induced by a quasi-isomorphism of multi-functor $A\arsim B$. 
The then result follows from a multivariable version of Proposition~\ref{qi}. 
\end{proof}

\medskip

The action on the Hochschild complex given by Theorem~\ref{action} is easy to implement explicitly in the case of strict
Frobenius algebras because operations involve fewer terms than in the general case. Figure~\ref{products} (a-c) gives examples of graphs representing the product (pair of pants with two inputs and one
output), the coproduct (pair of pants with one input and two outputs) and the $\Delta$--operator (degree 1 operator with one closed
input and one closed output). We give now the explicit formulas for the action of these graphs on the Hochschild complex of a strict
Frobenius algebra. Note that, because these operations are images of corresponding operations in $\OC$ generating a BV and a co-BV structure, we know that the product and $\Delta$ as well as the coproduct and $\De$ satisfy the BV relation. By Proposition~\ref{non-vanish}, the co-BV structure is a priori a non-trivial one. On the other hand, as we will see, the product in $\SD$ is rather trivial and hence the corresponding BV structure is rather trivial. 
We refer to Remark~\ref{GHrem} below for a product giving a less degenerate BV-structure in the case of commutative Frobenius algebras. 

\medskip

Let $A$ be a strict symmetric Frobenius algebra. 
To obtain the action of a (sum of) graph(s)  $G$ representing a chain in $\SD_d$, on a chain in the Hochschild complex of
$A$, we need to follow the prescription laid out in Section \ref{ks_section} (together with the appendix Section~\ref{signsub} for the signs).
In Figure~\ref{products}(a-c), we have made a choice of an ordering of the vertices, and of an orientation of the edges. The chosen
orientation of each graph is then the orientation corresponding to considering the graph as a composition of the operations at each
vertex in this ordering, with their canonical orientation (see Section~\ref{signsub}). 
Figure~\ref{products}(a'-c') shows the non-trivial graphs created when applying the procedure described in Section \ref{ks_section}.

\begin{figure}[h]
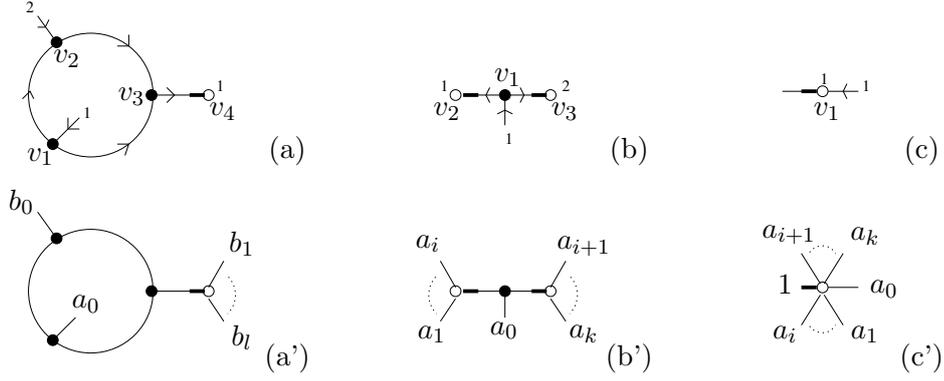

\begin{lpic}{products.2(0.5,0.5)}
 \lbl[b]{70,50;(a)}
 \lbl[b]{160,50;(b)}
 \lbl[b]{237,50;(c)}
 \lbl[b]{70,-5;(a')}
 \lbl[b]{160,-5;(b')}
 \lbl[b]{237,-5;(c')}
 \lbl[b]{5,50;$v_1$}
 \lbl[b]{12,76;$v_2$} 
 \lbl[b]{29,66;$v_3$}
 \lbl[b]{53,62;$v_4$}
 \lbl[b]{128,71;$v_1$}
 \lbl[b]{112,62;$v_2$} 
 \lbl[b]{143,62;$v_3$}
 \lbl[b]{212,62;$v_1$}      
\lbl[b]{17,11;$a_0$}
\lbl[b]{0,37;$b_0$}
\lbl[b]{58,26;$b_1$}
\lbl[b]{58,1;$b_l$}
\lbl[b]{127,4;$a_0$}
\lbl[b]{108,3;$a_1$}
\lbl[b]{107,27;$a_i$}
\lbl[b]{148,26;$a_{i+1}$}
\lbl[b]{148,3;$a_k$}
\lbl[b]{227,15;$a_0$}
\lbl[b]{201,16;$1$}
\lbl[b]{222,3;$a_1$}
\lbl[b]{201,3;$a_i$}
\lbl[b]{202,29;$a_{i+1}$}
\lbl[b]{222,29;$a_k$}
\end{lpic}
\caption{Representing graphs for the product, coproduct and $\Delta$--operator}\label{products}
\end{figure}

We denote as before a $k$--chain in the Hochschild complex of $A$ 
by $a_0\ot\dots\ot a_k$. Using the convention for the product and coproduct given in Section~\ref{signsub},
the graphs of
Figure~\ref{products} induce the following operations on $C_*(A)$: 

\smallskip

{\bf (a) Product:} 
$$(a_0\ot\cdots\ot a_k)\ot(b_0\ot\cdots\ot b_l)\mapsto
\left\{ \begin{array}{ll}0& k>0\\
\sum (-1)^{\epsilon}a_0''a_0'b_0\ot b_1\ot\cdots\ot b_l & k=0 
\end{array}\right.$$
where $\sum a_0'\ot a_0''$ denotes the coproduct of $a_0$ as an element of the
Frobenius algebra $A$ and 
$$\epsilon=|a_0'||a_0''|+d(|b_0|+\cdots+ |b_l|+l).$$
(The main part of this computation is done in detail in the appendix.)
Note in particular that, as the product is homotopy commutative, in homology it is 0
except on $HH_0(A,A)\ot HH_0(A,A)$.

\smallskip

{\bf (b) Coproduct:} 
$$(a_0\ot\cdots\ot a_k)\mapsto \sum (-1)^{\epsilon}(a''_0\ot a_1\ot\cdots\ot a_i)\ot(a_0'\ot a_{i+1}\cdots\ot a_k)$$
where $\epsilon=d(|a_1|+\dots+|a_k|+k)$. 

\smallskip

{\bf (c) $\Delta$--operator:} 
$$(a_0\ot\cdots\ot a_k)\mapsto \sum (-1)^\epsilon1\ot a_{i+1}\ot \cdots\ot a_k\ot a_0\ot a_1\ot \cdots\ot a_i$$
where $\epsilon=(|a_0|+\dots+|a_i|)(|a_{i+1}|+\dots+|a_k|)+ik$.

\begin{prop} \label{std_BV_prop}

If $A$ is a strict graded symmetric Frobenius algebra over a field $k$, the coproduct and $\Delta$ make $HH_*(A, A)$ into a Batalin-Vilkovisky coalgebra.  Moreover, this structure is dual to the BV-algebra structure on $HH^*(A, A)$, where the product is the cup product of Hochschild cochains, and the BV operator is dual to Connes' $B$--operator.

\end{prop}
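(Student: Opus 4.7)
The plan is to deduce the proposition from Tradler's known BV algebra structure on $HH^*(A,A)$ by exhibiting an explicit duality via the Frobenius pairing. For a finite-dimensional symmetric Frobenius algebra $A$ over a field $k$, the Frobenius trace $\mathrm{tr}\colon A\to k$ induces a non-degenerate pairing of chain complexes
\[ \langle -,- \rangle \colon C_*(A,A) \otimes C^*(A,A) \to k, \qquad \langle a_0 \otimes \cdots \otimes a_n,\; f \rangle \;=\; \mathrm{tr}\bigl(a_0 \cdot f(a_1,\ldots,a_n)\bigr). \]
The cyclicity of $\mathrm{tr}$ combined with the Frobenius property makes this pairing a chain map, so it descends to a non-degenerate pairing on $HH_*(A,A) \otimes HH^*(A,A)$, identifying $HH^*(A,A) \cong HH_*(A,A)^\vee$ as graded vector spaces (up to the global shift by $d$ appearing in the Frobenius form).

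Next I would verify that, under this pairing, the coproduct $\nu$ given by formula (b) is adjoint to the cup product on Hochschild cochains, and that the $\Delta$-operator of formula (c) agrees with Connes' $B$-operator on $C_*(A,A)$. For the first, pairing $\nu(a_0 \otimes \cdots \otimes a_{p+q})$ with $f \otimes g \in C^p \otimes C^q$ leaves only the term with $i=p$ in formula (b), and reduces to the identity $\mathrm{tr}(a \cdot xy) = \sum \mathrm{tr}(a_{(1)} x)\,\mathrm{tr}(a_{(2)} y)$ with $\Delta_A(a)=\sum a_{(1)}\otimes a_{(2)}$; this is precisely the statement that the Frobenius coproduct on $A$ is dual to the multiplication, i.e.\ the isomorphism $A \cong A^\vee$ of $A$-bimodules. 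For the second, inspection of formula (c), after the reindexing $j = i+1$, matches Connes' standard cyclic rotation formula with the expected Koszul signs; consequently $\Delta$ is adjoint to the dual-of-$B$ operator on $C^*(A,A)$.

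Finally, by Tradler's theorem \cite{Tradler-Z}, the triple $(HH^*(A,A),\cup,\Delta^{\mathrm{alg}})$ with $\Delta^{\mathrm{alg}}:=B^\vee$ the dual of Connes' $B$ is a BV algebra. Dualizing through our pairing and applying the two identifications above, the seven-term BV identity and $(\Delta^{\mathrm{alg}})^2=0$ transfer verbatim to the BV coalgebra identities for $(\nu,\Delta)$ on $HH_*(A,A)$. Note that both $\nu$ and $\Delta$ are automatically chain maps in the first place, since they arise from cycles in $\SD$ acting on the Hochschild complex via Theorem~\ref{strict}. This establishes the BV coalgebra structure and the claimed duality simultaneously.

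The main obstacle is sign-tracking: the signs $\epsilon$ appearing in formulas (b) and (c) originate from the graph action through the conventions of Section~\ref{signsub}, and one must check that they are exactly the signs making the adjointness with $\cup$ and $B^\vee$ hold on the nose. This involves carefully matching the degree shift by $d$ in the coproduct formula with the degree of the Frobenius pairing, and comparing the Koszul signs in $\Delta$ with the standard sign convention for Connes' $B$. Once this bookkeeping is done, the proposition follows by pure duality from Tradler's theorem.
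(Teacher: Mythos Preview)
Your argument is essentially correct, and the verification that $\nu$ is adjoint to the cup product and that the operator $\Delta$ of (c) is Connes' $B$ is exactly what the paper does.  The difference lies in how the BV coalgebra axioms themselves are established.

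You deduce the BV coalgebra structure by dualizing Tradler's BV algebra structure on $HH^*(A,A)$ through the Frobenius pairing.  The paper instead obtains the BV coalgebra structure directly and independently of the duality: a BV coalgebra is an algebra over the cooperad whose $k$--ary part is the homology of the genus-$0$ moduli space with one incoming and $k$ outgoing closed boundaries, and the relevant components of $\SD(\oc{0}{1},\oc{0}{k})$ are quasi-isomorphic to those of $\OC(\oc{0}{1},\oc{0}{k})$, so the BV coalgebra relations follow from Theorems~\ref{cos_thm} and~\ref{strict} without invoking any external result.  Only afterwards does the paper check the duality with $\cup$ and $B$.

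Your route is perfectly legitimate and arguably more elementary, but it imports Tradler's theorem as a black box and, as you note, implicitly needs enough finiteness on $A$ for the pairing $C_*(A,A)\otimes C^*(A,A)\to k$ to be perfect.  The paper's route keeps the argument internal to the machinery developed here and makes the first assertion of the proposition hold without any appeal to duality; it also illustrates how the operadic identification of the genus-$0$ sector of $\SD$ delivers algebraic identities for free.
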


The first part of this
proposition, before going to homology, recovers the cyclic Deligne conjecture as proved in \cite{kaufmann,Tradler-Z, paolo_cyclic}. 

The duality in this proposition is given on the chain level by a chain isomorphism $CH^*(A, A) \to \Hom(CH_*(A, A), k)$.  Degree-wise this is given by the map
$$\begin{array}{cc} \Hom(A^{\otimes n} , A) \to \Hom(A^{\otimes n+1}, k), & f \mapsto \widetilde{f} \end{array}$$
where $\widetilde{f}(a_0, \dots, a_n) = \langle a_0, f(a_1, \dots, a_n) \rangle$.

\begin{proof}

A BV coalgebra is an algebra over the cooperad whose $k$--ary operations are given by the homology of the moduli space of Riemann surfaces of genus 0
with one incoming and $k$ outgoing closed boundary components, with composition induced by gluing. As the corresponding component of
$\SD(\oc{0}{1},\oc{0}{k})$ is quasi-isomorphic to that of $\OC(\oc{0}{1},\oc{0}{k})$, the first part of the statement follows, independently of the
second part, from Theorems~\ref{cos_thm}
and \ref{strict}.

Now the duality carries $\Delta$ to $B$, since the $\Delta$--operator in $HH_*(A, A)$ given in (c) is precisely $B$, and the $\Delta$--operator on $HH^*(A, A)$ is defined by
transferring $B^*$ via $f \mapsto \widetilde{f}$.  (The signs in the formula for $B$ given in \cite[Sect.~2.4]{felix_thomas} differs from ours due to
different conventions. They match if we introduce a factor $(-1)^{a_0+\dots+a_k+k}$ passing the generator of $H_1(S^1)$ on the other side of the
Hochschild complex, and a factor $(-1)^{a_1+2a_2+\dots+ka_k}$ before and after the operation to compare the Hochschild complexes---this last factor sets the
degree $k$ shift of the Hochschild complex in between the $a_i$'s instead of at the end as we have it). 

So it suffices to check that the coproduct in (b) (which we will write as $\nu$) is dual to the
Hochschild cup product.  Let $f$ and $g$ be two Hochschild cochains; then (up to sign issues as above) 
\begin{eqnarray*}
\widetilde{f \cup g}\,(a_0, \dots, a_{p+q}) & = & \pm\, \langle a_0, f(a_1, \dots, a_p) \cdot g(a_{p+1}, \dots, a_{p+q}) \rangle \\
 & = & \pm \sum \langle a_0'', f(a_1, \dots, a_p)\rangle \cdot \langle a_0',  g(a_{p+1}, \dots, a_{p+q}) \rangle \\
& = &  \pm\, \nu^*(\widetilde{f} \otimes \widetilde{g})(a_0, \dots, a_{p+q})
\end{eqnarray*}
where the first equality is the definition and the third from the formula given in (b) above.  The second follows from
Figure~\ref{dually}, below, which relates the coproduct and product in the Frobenius algebra $A$ via the pairing. 
\vspace{0.4cm}
\begin{figure}[h]
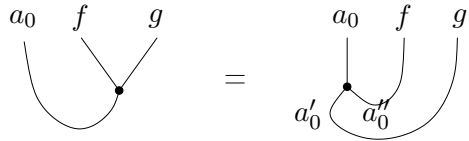

\begin{lpic}{dually(0.5,0.5)}
\lbl[b]{0,30;$a_0$}
\lbl[b]{15,29;$f$}
\lbl[b]{35,30;$g$}
\lbl[b]{85,30;$a_0$}
\lbl[b]{100,29;$f$}
\lbl[b]{115,30;$g$}
\lbl[b]{75,3;$a_0'$}
\lbl[b]{93,3;$a_0''$}
\end{lpic}
\caption{Duality of the cup product and coproduct}\label{dually}
\end{figure}
\end{proof}

\begin{rem}\label{GHrem}{\rm 
The product defined above is rather degenerate. If we assume that the Frobenius algebra is in addition commutative, then there is a less degenerate product
of degree 1, which also is part of a (now shifted) BV structure---see
\cite[Thm 4.7 and Cor 4.8]{Kla13B}. This product was also studied by Abbaspour on the homology level, see \cite[Sec 7]{Abb13A} or \cite[Thm 6.1]{Abb13B} and is expected to be related to the Goresky-Hingston product on the cohomology of the free loop space on a manifold \cite{GorHin09}. 
}\end{rem}

\subsection{String topology}\label{strings}

We apply in this section the results of the previous sections -- particularly Theorem \ref{action} and Corollary \ref{factor_action_cor} -- in order to control the (not yet entirely understood) operations in string topology in characteristic 0.  

Let $C^*(M)=C^*(M;\QQ)$ denote the {\em rational} singular cochain complex of a compact, oriented, simply connected manifold, and $H^*(M)$ its cohomology. 
It is well known (see \cite{jones}) that there is an isomorphism 
$$H^{-*}(LM) \cong HH_*(C^{-*}(M), C^{-*}(M))$$
from the cohomology of the free loop space $LM$ to the Hochschild homology of $C^{-*}(M)$, the cochains of $M$ seen as a chain complex in negative
degree.  $H_*(LM)$ is equipped with the structure of a BV-alegbra, extending to an action of certain spaces of non-degenerate string diagrams  \cite{CS99,CohJon02,CohGod,Cha05}. This structure has been expected to extend to the structure of a 
(positive boundary) homological conformal field theory (HCFT), i.e.~action of the moduli space of 
Riemann surfaces, possibly compactified, see eg.~\cite{ChaSul04,godin07,Sul07,Poi10,PoiRou,DPR15}. 
We will give here such an extension using the above algebraic model of $H^*(LM)$. It bears pointing out that this construction does not obviously agree with the geometric constructions above, although the underlying BV structures do agree.  See Remark \ref{luc_remark} for a subtler example.

We follow the prescription laid out by Lambrechts-Stanley \cite{lambrechts_stanley} and Felix-Thomas \cite{felix_thomas} to construct
this structure in Hochschild homology.  We note that $C^*(M)$ is quasi-isomorphic to a (simply connected) commutative differential
graded algebra $A$ (e.g., the algebra of differential forms on $M$ with rational coefficients), and that $H^*(A) \cong H^*(M)$ is a
strict Frobenius algebra.  Lambrechts-Stanley give a recipe for constructing, for any such $A$, a weakly equivalent algebra $B$ which
is itself a commutative differential graded Frobenius algebra; that is, $B$ itself satisfies Poincar\'e duality prior to application of
cohomology. If $M$ has dimension $d$, such an algebra $B$ is a dimension $d$ symmetric Frobenius algebra in our sense, that is it defines a functor 
$\Phi:H_{bot}(\OO_d)\to \Comp$. 
In particular, we can apply Theorem~\ref{strict} to $B$ and get an
action of Sullivan diagrams on its Hochschild homology.

Using the chain of isomorphisms
$$H^{-*}(LM) \cong HH_*(C^{-*}(M), C^{-*}(M)) \cong HH_*(A^{-*}, A^{-*}) \cong HH_*(B^{-*}, B^{-*}) \leqno{(*)}$$
we get an action of Sullivan diagrams on $H^*(LM)$, and hence an HCFT 
by precomposition with the map $\OC\to\SD$. We do not know for sure that this (somewhat collapsed) action is the one constructed by Godin, but
the next proposition says that it is an extension of Chas-Sullivan's string topology:

\begin{prop}\label{string_prop}

The co-BV operations on $H^*(LM, \QQ)$ dual to the Chas-Sullivan string topology BV operations on $H_*(LM, \QQ)$ of \cite{CS99} extend to an action of the closed
part of $H_{-*}(\SD_{-d},\QQ)$ (for $d = \dim M$) using Theorem \ref{strict}. 

\end{prop}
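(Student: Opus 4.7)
The plan is to apply Theorem~\ref{strict} to a Poincar\'e duality model of $M$, and then identify the resulting genus zero coproduct and $\Delta$--operator with the duals of the Chas--Sullivan loop product and BV operator via Proposition~\ref{std_BV_prop} together with the Jones--type isomorphism relating loop space homology to Hochschild cohomology.

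First, by Lambrechts--Stanley \cite{lambrechts_stanley} (see also \cite{felix_thomas}), since $M$ is simply connected, compact and oriented, there exists a commutative dg-algebra $B$ quasi-isomorphic to $C^*(M,\QQ)$ which is itself a strict symmetric Frobenius algebra of dimension $d=\dim M$. Considered in negative degree, $B^{-*}$ is a strict symmetric Frobenius algebra of dimension $-d$, so Theorem~\ref{strict} provides a $\Sigma$--equivariant chain-level action of $\SD_{-d}$ on the pair $(\bC(B^{-*}),B^{-*})$. Restricting to the closed-to-closed part and passing to homology gives an action of $H_{-*}(\SD_{-d},\QQ)$ on $HH_*(B^{-*},B^{-*})$, which transports along the chain of quasi-isomorphisms $(*)$ to an action on $H^{-*}(LM,\QQ)$.

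Next, the comparison with Chas--Sullivan is done via the Hochschild cohomology of $B^{-*}$. By Proposition~\ref{std_BV_prop}, the closed genus zero part of the action above is a BV coalgebra on $HH_*(B^{-*},B^{-*})$ which is, on the nose, dual to the BV algebra structure on $HH^*(B^{-*},B^{-*})$ given by the Hochschild cup product and the Connes $B$--operator. Since $B \simeq C^*(M,\QQ)$, we have a quasi-isomorphism $HH^*(B^{-*},B^{-*})\cong HH^*(C^{-*}(M),C^{-*}(M))$ respecting cup product and Connes' $B$. The final ingredient is the classical theorem of Jones \cite{jones}, in the form worked out for string topology by Cohen--Jones and in the rational setting by F\'elix--Thomas \cite{felix_thomas}, which identifies $HH^*(C^{-*}(M),C^{-*}(M))$ with $H_*(LM,\QQ)$ in such a way that the Hochschild cup product corresponds to the Chas--Sullivan loop product and Connes' $B$ corresponds to the BV operator induced by rotation of loops. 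Dualizing this identification yields precisely the claim.

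The principal bookkeeping obstacle is matching degree shifts and signs: one must check that the $d$--orientation twist in $\SD_{-d}$, the dimension shift built into the Jones/Cohen--Jones isomorphism (which raises degrees by $d$), and the sign conventions of Section~\ref{strict_section} and the appendix all conspire correctly, so that the explicit formulas (b) and (c) of Section~\ref{strict_section} for the coproduct and $\Delta$ agree, under $(*)$, with the duals of the loop product and the BV operator. The conceptual content beyond this bookkeeping is entirely packaged in Theorem~\ref{strict} and Proposition~\ref{std_BV_prop}.
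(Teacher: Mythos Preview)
Your proposal is correct and follows essentially the same route as the paper: apply Theorem~\ref{strict} to the Lambrechts--Stanley Poincar\'e duality model $B$, use Proposition~\ref{std_BV_prop} to identify the resulting co-BV structure on $HH_*(B,B)$ with the dual of the cup-product/Connes-$B$ BV structure on $HH^*(B,B)$, and then invoke the F\'elix--Thomas comparison (their Proposition~1) to match the latter with the Chas--Sullivan structure under the isomorphism~$(*)$. The paper's proof is just a more compressed version of exactly this argument.
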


\begin{proof}
The co-BV structure (and $H_*(\SD_d)$--structure) we define on $H^*(LM,\QQ)$ is defined via an action on $HH_*(B,B)$, hence it is
equivalent to check that the action on  $HH_*(B, B)$ is dual to the string topology action. By Proposition \ref{std_BV_prop}, our co-BV structure is dual to
the BV structure on $HH^*(B,B)$ coming from the Hochschild cup product and the dual of Connes' operator $B$. Hence,  
by \cite[Prop.~1]{felix_thomas}, our structure is carried to the dual of the Chas-Sullivan structure by the isomorphism $(*)$.
\end{proof}

The HCFT structure we produce on  $H^*(LM,\QQ)$ is an action of moduli spaces of Riemann surfaces factoring through an action of Sullivan diagrams,
which immediately implies that  a substantial part of the action is trivial (see Proposition~\ref{strictvanish}). 
 In particular, we know that all stable classes in the homology of the moduli space act trivially, a fact known in the string topology setting by work
 of Tamanoi \cite{tamanoi}.  

\begin{rem} \label{luc_remark} {\rm 

It is worth issuing a caveat here: the main result of \cite{luc} implies that the BV structures on 
$$\begin{array}{ccc}
H_*(LS^2; \FF_2) & and & HH^*(H^*(S^2; \FF_2), H^*(S^2; \FF_2))
\end{array}$$
cannot be isomorphic (even though the underlying Gerstenhaber structures are), if we equip $H^*(S^2; \FF_2)$ with the Frobenius algebra structure coming from Poincar\'e duality.  Consequently, we cannot expect the construction given above to yield the HCFT structure on string topology if done integrally.

}\end{rem}

\begin{rem}{\rm 
As the Lambrechts-Stanley models for $C^*(M)$ used above are actually  commutative Frobenius algebras, the action of Sullivan diagrams on $H^*(LM)$ constructed here actually factors through an action of the complex of looped diagrams later constructed by Klamt in \cite{Kla13B}. The map from Sullivan diagrams to looped diagrams is not surjective on either the chain or homology level, so this gives new operations such as the higher coproduct already mentioned in Remark~\ref{GHrem}.  The map from Sullivan diagrams to looped diagrams corresponding to considering commutative Frobenius algebras as symmetric Frobenius algebras, is expected to be essentially injective, both on the chain and the homology level, so this further factorization is not expected to give much in terms of vanishing results. 
}\end{rem}

\subsection{A dual perspective and relationship to the work of Kaufmann-Penner} \label{KP_section}

In \cite{KauPen}, Kaufmann and Penner give a model of open and closed ``string interaction'' using arc systems in surfaces. 
We discuss here how their model relates to the open-closed cobordism category $\OC$ and the category of Sullivan diagrams $\SD$ occuring in the present paper, and gives to a dual approach to string topology. 

\medskip

The open-closed category $\OC$ is build out of fat graphs. A fat graph can be defined as an equivalence class of graphs embedded in a fixed surface $F$ of the same topological type, up to isotopy,  in such a way that $F$ is just a thickening of the graph. The equivalence relation is given by the action of the mapping class group of $F$. Now dual to such a graph is a system of arcs in $F$ going from boundary to boundary and cutting the surface into polygons---such families of arcs are called {\em filling}. To model the moduli space of Riemann surfaces, one can equivalently work with either fat graphs or equivalence classes of filling, or even {\em quasi-filling}\footnote{A family of arcs in a punctured surface is quasi-filling if the complements of the arcs in the surface is a union of polygons and once punctured polygons.}, arc systems. 

When modeling open-closed cobordisms, one starts with a {\em windowed} surface $F$, just as in Remark~\ref{KaufSD}, that is $F$ comes equipped with a marked point for each open and closed boundary component and a corresponding ``in'' and ``out'' labeling. Let $\De=\De_0\coprod\De_1\subset \del F$ denote the set of these marked points, with $\De_0$, a set of points alone in their boundary components corresponding to closed boundaries, and $\De^1$ the set of points corresponding to the open boundaries. Kaufmann and Penner work with arc families in such surfaces, and in their model, the open boundaries are the intervals that are in between the points of $\De_1$. This is motivated by the fact that the arcs have their endpoints in the complement of $\De$, and that arcs at such ``open windows'' should model the evolution of an open string. The dual graph on the other hand will have its endpoints at $\De$; our interpretation in this paper (just like in \cite{costello07}) is that these endpoints of leaves in the graph model the open boundaries. This is motivated by the model of the open cobordism category we worked with, and in particular the fact that the gluing along leaves model the gluing in moduli space induced by juxaposition of polygonal decomposition of the surfaces as explained in Section \ref{OO}.  This means that, in this dual picture, the role of the ``open'' and ``free'' intervals is switched!  The fact that such a switch in the roles of the intervals is possible comes from the fact that the diffeomorphism group which fixes a point on a boundary component is homotopy equivalent to the subgroup fixing the whole boundary. Hence the boundary components containing open boundaries can be considered as completely fixed, and if we subdivide a boundary component as a succession of ``open'' and ``free'' intervals, there is no difference from the point of view of moduli space between the one set of interevals (the open) and the other (the free). 
\begin{figure}[h]
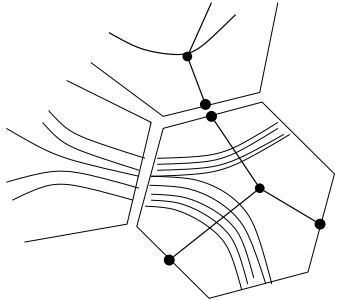

\begin{lpic}{KPdual(0.5,0.5)}
\end{lpic}
\caption{Arc family and dual fat graph}\label{KPdual}
\end{figure}

(We refer to \cite[Sec 3.2 and 4]{Ega15} for more details about the relationship between fat graphs and filling arc families in the bordered case and the relationship to black and white graphs, and \cite{Pen87} for a direct proof that quasi-filling arc families model the moduli space of Riemann surfaces.) 

\medskip

The above paragraphs indicate that we can switch between the fat graph and arc system models, but given that the role of open and free boundaries switches under this duality when there are open boundary components, the gluing along open boundaries is not going to be compatible, simply because we glue at different places! We expect nevertheless that these two  gluing are closely related, as we explain now. 

The idea of the gluing used by Kaufmann and Penner (first introduced in \cite{KauLivPen}) is to think of a weighted arc family, a point in the space of arc families,  as a collection of ribbons whose widths are given by the weights, and then gluing two such families by scaling so that the total weights match, possibly then discarding arcs that are not of an appropriate type. As arcs correspond to edges of the dual fat graphs, this gluing corresponds to a gluing that identifies edges in fat graphs. Gluing quasi filling arc families this way does not always produce a quasi filling arc family, but when gluing along closed boundaries and restricting to arcs families corresponding to admissible fat graphs, the gluing stays within admissible fat graphs and corresponds under the duality to the gluing of admissible metric fat graphs along boundary components which models the gluing of moduli space \cite[Lem 3.36, Lem 3.39 and Thm 3.30]{Ega15}. Moreover, this closed gluing 
 corresponds to the gluing of black and white graphs used in the present paper under the equivalence between admissible fat graphs and black and white graphs of \cite[Thm 4.41]{Ega15}. 

The gluing along open boundaries proposed by Kaufmann and Penner has not been studied  much yet, but 
it is likely that one can define more generally a category of ``admissible open-closed fat graphs'', with the corresponding admissible arc families under the duality, still modelling the moduli space of Riemann surfaces and with the property that the Kaufmann-Penner gluing  is well-defined and does model the gluing of moduli spaces also along open boundaries. In terms of fat graphs, this gluing would on open boundaries correspond to identifying certain edge sequences in between leaves in the graphs and thus would be as such different than the open gluing along leaves used in the present paper---we would again be gluing at a different place in the graph. However we expect that the result would simply be a different model of the open-closed gluing on moduli space once the role of the open boundaries is switched again. Note that if such a category of open-closed fat graphs exist, the open part will define a prop with $\Ai$--multiplication and by Theorem 3.1 and Corollary 2.2 of \cite{Wah12}, we will have that this category of admissible open-closed fat graphs is quasi-isomorphic to the prop of formal operations on the Hochschild complex of algebras over its open part.

\medskip

In the present paper, we have applied our open-closed cobordism category $\OC$ to string topology using on the open part a model of the algebras $C^*(M)$. This resulted in, at least rationally, a structure of algebra over $\OC$ factoring through our category $\SD$ of Sullivan diagrams for the pair $(C^*(M),C^*(LM))$, with  Jones' Hochschild model of $C^*(LM)$. 
As we have seen in Remark~\ref{KaufSD}, arc families where the arcs go from ``in'' to ``out'' boundaries, without any filling condition, and such that each outgoing boundary has arcs, correspond under the same duality as above to Sullivan diagrams when restricting to surfaces with only closed boundaries. For open boundaries, this duality defines a new version of {\em open Sullivan diagrams} (which should be a quotient of the above open admissible fat graphs). 
Given the nature of these open Sullivan diagrams, it is natural to expect that $C_*(\Om M)$, the chain complex of the based loop space $\Omega M$,  is an algebra over them. As these open Sullivan diagrams form a prop with $\Ai$--multiplication, 
one can then ask whether the whole open-closed category of arc families of Sullivan type is an extension, or at least up to quasi-isomorphism, of the Hochschild category of its open part (in our terminology). This would fit with the moduli space model proposed above as well as with our string topology computation in the previous section, and the fact that $C_*(C_*(\Om(M)),C_*(\Om(M)))$ is a model for $C_*(LM)$\cite{Goo85}.  The generalization to more branes should also allow to use more general path spaces in $M$. 
Such a construction should then recover \cite[Cor 6.7]{Kau10}.

\subsection{Hochschild homology of unital $\Ai$ algebras}\label{Ai}

In this section, we briefly consider what our construction gives when applied to 
the category $\e = \Ai^+$, equipped with the identity functor $id: \Ai^+ \to \Ai^+$.  

\begin{prop} \label{ann_prop}

The Hochschild complex $\bC^p(\Ai^+(m,-))(n)$ is isomorphic to the (split) subcomplex of $(\bar p,m+n)-\textrm{Graphs}$ consisting of fat graphs whose associated surface is a disjoint union of 

\begin{itemize} 

\item $n$ disks, each with precisely one outgoing open boundary, and 
\item $p$ annuli, each with precisely one closed outgoing boundary,

\end{itemize}

\noindent and with $m$ incoming open boundaries distributed on the free boundaries of these.  

\end{prop}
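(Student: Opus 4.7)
The plan is to adapt the proof of Lemma \ref{OOext} to the smaller category $\Ai^+ \subset \OO$. First I would unpack the definition of the iterated reduced Hochschild complex to obtain
$$\bC^p(\Ai^+(m,-))(n) \ \cong \bigoplus_{k_1,\ldots,k_p \geq 1} \Ai^+(m,\, k_1+\cdots+k_p+n)/U \ \otimes \ L_{k_1}\otimes\cdots\otimes L_{k_p},$$
where, unwinding Definition \ref{CbDef} through $p$ iterations of $\bC$, the subspace $U$ is generated by all insertions of a unit $u\in\Ai^+(0,1)$ at any position other than the first of each of the $p$ blocks of sizes $k_1,\ldots,k_p$ among the first $k_1+\cdots+k_p$ outgoing roots of the forest. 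In direct analogy with Lemma \ref{OOext}, I would then build the isomorphism by attaching a generating forest to the white vertices: the root of the $(k_1+\cdots+k_{j-1}+i)$th tree is glued to the $i$th leaf of $l_{k_j}$, while the last $n$ trees contribute labeled outgoing leaves of the resulting black and white graph.

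The key geometric content is the identification of the topological type of the resulting graph. Each of the $n$ free trees is simply connected, so it thickens to a disk with its outgoing root as the unique outgoing open boundary and its remaining leaves along the free boundary. The bouquet of $k_j$ trees attached to the $j$th white vertex deformation retracts onto the cycle at that white vertex and hence thickens to an annulus whose closed outgoing boundary is the white vertex itself and whose free boundary traverses the incoming leaves of the attached trees. The $m$ incoming open leaves are thus distributed among the free boundaries of these $n$ disks and $p$ annuli, matching exactly the subcomplex described in the proposition.

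The step that will take the most care is the bookkeeping for unlabeled leaves. I need to check that the surviving units $u$ at the first position of each block correspond under the attaching map precisely to the unlabeled start half-edges permitted in $\oc{m+n}{p}$--graphs, and that every other unit insertion is both killed by the quotient $/U$ and forbidden on the graph side. Once this match is in place, the inverse map---cut each white vertex open at its start half-edge and record the attached branches in cyclic order as a forest in $\Ai^+$, reading a start half-edge with no attached tree as a unit leaf---is immediate. Finally I would check that the differential $d_{\Ai^+} + d_L$ on the Hochschild side matches the graph differential $\lfloor\hat d\rfloor$: since $\Ai^+$ produces only trees, no black vertex of valence $\geq 4$ is ever created (every blow-up either subdivides a white vertex or a trivalent black one), so $\lfloor\,\rfloor$ is innocuous and the comparison reduces to the one already carried out in Section \ref{annulisec}. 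Splitness is automatic because the subcomplex is cut out by a property of the underlying surface type, which is preserved by the differential on $\oc{m+n}{p}$--Graphs.
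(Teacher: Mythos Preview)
Your approach is correct and matches the paper's (very terse) proof: the gluing map attaching forests in $\Ai^+$ to the white-vertex graphs $l_{k_j}$ produces exactly disjoint unions of trees and trees-attached-to-a-white-vertex, and these thicken to disks and annuli as you describe.

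One slip: your claim that ``no black vertex of valence $\geq 4$ is ever created'' is false. The corolla $m_k \in \Ai^+(k,1)$ has a single black vertex of valence $k+1$, so forests in $\Ai^+$ routinely contain high-valent black vertices, and blow-ups at such vertices do appear in $\hat d$. The correct reason $\lfloor\,\rfloor$ is innocuous here is not the absence of high-valent black vertices, but that $\lfloor\,\rfloor$ only acts nontrivially when an \emph{unlabeled leaf} sits at such a vertex (or at a white vertex in a non-start position). The only unlabeled leaves present are start half-edges of white vertices, and the bookkeeping for those is exactly what you already carried out following Lemma~\ref{OOext}. The differential comparison then goes through for the same reason it does there; no special feature of $\Ai^+$ beyond ``forests'' is needed.
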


\begin{proof}
The gluing map
$$\bigoplus_{n_i\ge 1}\Ai^+(m,n_1+\dots+n_p+n)/U_{I} \ot L_{n_1}\ot\dots\ot L_{n_p} \to (\bar p,m+n)-\textrm{Graphs}$$
produces graphs which are a disjoint union of trees and trees attached to white vertices (see Figure~\ref{Ann}); the associated surfaces are as described.
\end{proof}

We therefore define an extension $\Ann$ of $\bC\Ai^+$ to be the subcategory of $\OC$ consisting of graphs whose associated surface is a disjoint union
of surfaces as in \ref{ann_prop}, or a closed-to-closed annulus.  Note that we cannot introduce any closed-to-open annuli in $\Ann$, for composites
would produce open-to-open morphisms that are not already present\footnote{Similarly there are no disks with a closed incoming boundary, since
  compositions would produce an open-to-open morphism with codomain $0$.} in $\bC \Ai^+$.  As $\Ann$ is an extension of $\bC \Ai^+$,  
by Theorem \ref{action}, we conclude:

\begin{thm}\label{Ann_Thm}

For any $\Ai^+$--algebra $A$, the pair $(\bC(A), A)$ is an $\Ann$--module.

\end{thm}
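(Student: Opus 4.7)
The plan is to deduce this theorem as a direct application of Theorem~\ref{action} to the PROP with unital $\Ai$--multiplication $(\Ai^+, id)$, with extension $\tCE = \Ann$. The input to Theorem~\ref{action} is a split monoidal functor $\Phi: \Ai^+ \to \Comp$; an $\Ai^+$--algebra $A$ is by definition exactly such a functor with $\Phi(1) = A$, so $\bC(\Phi)(0) = \bC(A)$ and the hypotheses are satisfied on the algebraic side. What remains is to verify that $\Ann$, as defined just before the theorem, is genuinely a monoidal subcategory of $\OC$ and an extension of $\bC\Ai^+$.

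First I would confirm that $\Ann$ is closed under the composition and monoidal structure inherited from $\OC$. Disjoint union preserves the list of allowed surface types (disks with one outgoing open boundary, annuli with one outgoing closed boundary, and closed-to-closed annuli). For composition one enumerates the possibilities: gluing open boundaries of two disks yields a disk or tree-of-disks sitting in $\Ai^+$; gluing an open-to-closed annulus to a disk along open boundaries yields another open-to-closed annulus; gluing a closed-to-closed annulus onto the outgoing closed boundary of an open-to-closed annulus yields an open-to-closed annulus; and closed-to-closed with closed-to-closed yields closed-to-closed. No other compositions arise, since the source/target types of the allowed morphisms only permit these pairings.

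The main step is then to verify the extension condition, namely that $\Ann(\oc{m_1}{0}, \oc{m_2}{n_2}) \cong \bC^{n_2}(\Ai^+(m_1,-))(m_2)$. For $n_2 = 0$ this is immediate since both sides equal $\Ai^+(m_1, m_2)$. For $n_2 > 0$, this is exactly the content of Proposition~\ref{ann_prop}: the isomorphism is implemented, as in the proof of Lemma~\ref{OOext}, by attaching a tree in $\Ai^+$ along its outgoing leaves to the incoming leaves of a disjoint union $l_{k_1} \sqcup \cdots \sqcup l_{k_{n_2}}$ of circles with cyclically ordered marked points. The quotient by $U$ in the reduced Hochschild complex matches the condition that the only unlabeled leaves permitted in an $\Ann$-graph are start half-edges of white vertices, and the degree shifts line up with the differential.

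The subtle point, and the main obstacle, is the one flagged in the footnote preceding the theorem statement: one might be tempted to enlarge $\Ann$ by also allowing closed-to-open annuli (or disks with incoming closed boundary), but their composites with open-to-closed annuli would produce open-to-open surfaces of positive genus/with handles, which are not in $\Ai^+$ and hence not in $\bC\Ai^+$. So the verification that $\Ann$ is closed under composition is precisely what forces the asymmetry in the definition of $\Ann$, and it must be checked case by case. Once the extension property is established, Theorem~\ref{action} applied to $\Phi$ yields the $\Si$--equivariant $\Ann$--module structure on $(\bC(A), A)$.
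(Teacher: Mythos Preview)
Your proposal is correct and follows exactly the paper's approach: the paper simply notes that $\Ann$ is an extension of $\bC\Ai^+$ (via Proposition~\ref{ann_prop} and the discussion defining $\Ann$) and then invokes Theorem~\ref{action}. Your verification that $\Ann$ is closed under composition spells out details the paper leaves implicit in its definition of $\Ann$, but the logical structure is the same.
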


We examine the resulting $H_*(\Ann)$--structure on the pair $(HH_*(A, A), H_*(A))$, for $A$ a unital $\Ai$--algebra.

$\Ann$ evidently contains $\Ai^+ = \Ann \cap \OO$, and so the open sector of an $\Ann$--module remains (unsurprisingly) a unital  $\Ai$--algebra.  This equips $H_*(A)$ with the structure of a unital associative ring.  Write $m \in H_0(\Ann(\oc{2}{0}, \oc{1}{0}))$ for the class corresponding to the product, and $u \in H_0(\Ann(\oc{0}{0}, \oc{1}{0}))$ for the class corresponding to the unit.

Furthermore, since the mapping class group of an annulus with fixed boundaries is isomorphic to $\Z$, generated by the Dehn twist, the morphism
complex $\Ann(\oc{0}{1}, \oc{0}{1})$ is quasi-isomorphic to $C_*(B\Z) = C_*(S^1)$.  Up to homotopy, the only nontrivial operation $\oc{0}{1} \to
\oc{0}{1}$ is thus a class $\Delta$ of degree 1, corresponding to the fundamental class of the circle. This is Connes' operator $B$ explicitly given
at the end of Section~\ref{strict_section} (see Proposition~\ref{std_BV_prop})\footnote{Note here that the formula is the same for $\Ai$--algebras as for strictly associative
algebras as there are no black vertices in the graph generating the operation $\Delta$.}.   

One should also consider the interaction of the open and closed sectors.  There are no closed-to-open morphisms in $\Ann$, but there is a class $i \in
H_0(\Ann(\oc{1}{0}, \oc{0}{1}))$ coming from the annulus with one open incoming and one closed outgoing boundary. This map 
 $i:H_*(A) \to HH_*(A, A)$ is induced by the quotient map $A \to HH_0(A, A)$. 

\begin{prop}

The category $H_*(\Ann)$ is generated as a symmetric monoidal category by the operations $m$, $u$, $\Delta$, and $i$.

\end{prop}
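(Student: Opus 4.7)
\medskip

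\noindent\emph{Proof plan.} The plan is to enumerate the connected components of the morphism spaces of $\Ann$ up to homotopy type, show that each component has homology concentrated in degree $0$ except for contributions from closed-to-closed annuli, and then realize every resulting class as a composition of the four listed generators under the symmetric monoidal structure.

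First I would classify the topological types occurring as connected components of morphisms in $\Ann$. By Proposition \ref{ann_prop} and the definition of $\Ann$, the allowed connected surfaces are exactly: (i) disks with some number $k\ge 0$ of incoming open boundary intervals and one outgoing open boundary interval (with free boundary in between); (ii) annuli with $k\ge 0$ incoming open boundaries on one circle and one outgoing closed boundary on the other; and (iii) the closed-to-closed annulus, carrying two closed boundaries (one incoming, one outgoing). Any morphism in $\Ann$ is a disjoint union of such surfaces, so it suffices by the symmetric monoidal structure to realize the class of each connected component.

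Next I would use Theorem \ref{cos_thm} to compute $H_*$ of each component. Surfaces of type (i) have trivial mapping class group (the moduli space is contractible), contributing a single class in degree $0$. Surfaces of type (ii) also have trivial mapping class group: any Dehn twist around the core circle can be pushed off through the free boundary on the side of the open boundary, so again the moduli is contractible and $H_*$ sits in degree $0$. Surfaces of type (iii) have mapping class group $\Z$ generated by the Dehn twist, so the moduli is a $K(\Z,1)\simeq S^1$ with $H_0\oplus H_1$ generated respectively by the identity and by $\Delta$. By Künneth, for a general component (a disjoint union of the above), $H_*$ is spanned by products of identity classes of type~(i)/(ii) components and powers of $\Delta$ applied to type (iii) components, the latter satisfying $\Delta^2=0$ as in $H_*(S^1)$.

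Then I would write each basic class as a composition of $m,u,i,\Delta$. For type (i) with $k\ge 2$ incomings, iterate $m$; for $k=1$ use the identity in $\Ai^+$; for $k=0$ use $u$. (This uses that $H_0(\Ai^+)$ is the associative operad and is generated by $m$ and $u$.) A type (ii) annulus with $k\ge 1$ incomings is obtained by composing the corresponding disk from (i) with $i$; for $k=0$ it is $i\circ u$. A closed-to-closed annulus with $n$-fold Dehn twist is $\Delta^n$, with $n=0$ being the identity on $\oc{0}{1}$, which is built in via the monoidal structure. Finally, an arbitrary morphism, being a disjoint union of these, is obtained from the connected pieces by the symmetric monoidal structure.

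The main obstacle is the mapping class group computation for type (ii), and more generally confirming that the relevant compositions really yield the claimed classes (as opposed to additional classes coming from Dehn twists about core circles of mixed-type annuli). I would verify the MCG vanishing by exhibiting an explicit isotopy rel the closed outgoing boundary dragging the Dehn twist across the free/open part of the other boundary, which is always present since each such annulus has at least one free arc separating the open boundary intervals from nothing else. Once this is in place, the dimension count in degree zero matches the single composition produced by $m,u,i$, and the only higher classes come from $\Delta$ on the closed-to-closed factors, completing the proof.
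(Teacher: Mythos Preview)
The paper states this proposition without proof, so there is nothing to compare your argument against directly. Your approach is correct and is essentially the natural one suggested by the surrounding discussion in the paper: enumerate the connected surface types allowed in $\Ann$ (disks with one open output, annuli with one closed output and open inputs on the other side, and the closed-to-closed annulus), compute the homology of each component via its mapping class group, and then realize each resulting class as a composition of $m,u,i,\Delta$ together with symmetries. The crux is exactly the point you flag as the main obstacle, namely that the open-to-closed annuli have trivial mapping class group because the Dehn twist can be pushed off through the free arc on the open side; this is correct, and once it is in hand the rest is routine.

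Two small comments on presentation. First, your phrase ``identity classes of type~(i)/(ii) components'' in the K\"unneth step is slightly ambiguous; you mean the unique degree~$0$ generator of each such component. Second, when you say the different cyclic arrangements of open boundaries on a disk or annulus are accounted for by symmetries, it may be worth spelling out that this is because a cyclic order of $(1,\dots,k,\textrm{out})$ is determined by a linear order of $(1,\dots,k)$, and all $k!$ of these arise from the standard one by precomposition with elements of $\Sigma_k$. Neither point affects correctness.
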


\begin{rem}{\rm 

The Hochschild complex of a category $\e$ is functorial in $\e$; furthermore, it is not hard to see that a monoidal quasi-isomorphism $\e \to \e'$
induces a quasi-isomorphism of Hochschild complexes (using, e.g. the spectral sequence of a bicomplex).  Consequently the results above apply equally
to the category associated to the operad $Ass^+$ of unital associative algebras, since it is quasi-isomorphic to $\Ai^+$.

}\end{rem}

\subsection{Algebras over $\e=Ass^+\otimes \pp$ for an operad $\pp$}\label{AxP} 

Let $\pp$ be a chain operad, and consider the operad $Ass^+\otimes \pp$ whose algebras are unital associative algebras together with a commuting $\pp$--algebra structure.  By the work of Brun, Fiedorowicz, and Vogt \cite{BFV07}, if $\pp$ is the chain complex of the little disks operad $\mathcal{C}_n$, the resulting tensor product is an $E_{n+1}$--operad.  Furthermore, they show that the Hochschild complex of an $Ass^+ \otimes \pp$--algebra admits the structure of a $\pp$--algebra.

Explicitly, the action of $\pp$ on $C_*(A)$ is as follows: As $A$ is a unital associative algebra, we can consider $C_*(A)$ as the chain complex
associated to a simplicial chain complex $A_\bullet$ with $A_p=A^{\ot p+1}$ and degeneracy $s_i$ inserting a unit in position $i+1$. The $Ass^+\otimes
\pp$--structure of $A$ defines a simplicial $\pp$--structure on $A_\bullet$ by acting diagonally on $A^{\ot p+1}$, and this in turn induces a
$\pp$--structure on the associated total chain complex $C_*(A)$. This last structure can be made explicit via the Eilenberg-Zilber maps. The action of
a chain $p\in\pp(k)$ on $(a_0^1\ot\dots\ot  a_{p_1}^1)\ot\dots\ot(a_0^k\ot \dots \ot a_{p_k}^k)$ is of the form 
$$\sum\pm\,  p(a_0^1,\dots,a_0^k)\ot p(1,\dots,a_1,\dots,1)\ot \dots\ot  p(1,\dots,a_{p_1+\dots+p_k},\dots,1),$$ 
where the sum is over all possible shuffles of $(a_1^1,\dots, a_{p_1}^1),\dots,(a_1^k,\dots, a_{p_k}^k)$, with the resulting sequence denoted $a_1,\dots,a_{p_1+\dots+p_k}$, and $p(1,\dots,a_i,\dots,1)$ means take $a_i=a^j_k$ at the $j$th position and 1's everywhere else. 

By the results of the previous section, $HH_*(A, A)$ is a $H_*(\Ann)$--module.  It is natural, then, to ask how this interacts with the
Brun-Fiedorowicz-Vogt $\pp$--algebra structure. 
Comparing the above formula with the formula for Connes' $B$ operator (given at the end of Section~\ref{strict_section}) shows though that these two
structures do not interact very well, in particular because of the special role of the $a_0^j$'s in the $\pp$--action. 
One can though define an extension of the category $Ass\ot \pp$ with the free operad generated by $\pp$ and $B$ as ``closed-to-closed'' morphisms, subject to the relations in $\pp$ and $B^2 = 0$.

\section{Appendix: How to compute signs}\label{signsub}

Let $\Phi:\e\to\Comp$ be a split monoidal functor for $\e=\OO,\OO_d,\OC$ or $\OC_d$, with $\Phi(1)=A$ an $\Ai$--Frobenius algebra. 
Given an {\em oriented} graph $\Ga$ which is a morphism in $\e$, we want to read off an explicit formula of the associated operation on
$A$ or $C_*(A,a)$ {\em with
signs}.   The explicit formula will be given in terms of a chosen set of generating operations for $\OO$, for example in terms of the (co)product and
higher (co)products, the unit and the trace in $\OO$ (or $\OO_d$), and additionally the generator $l_n$ of Figure~\ref{graphsex} for $\OC$ (or $\OC_d$).

To be precise, one first needs to make a choice of which orientation should be thought of as the ``positive'' orientation for the
graphs representing the chosen basic operations. For the products and coproducts, we choose here the orientation $v\w h_1\w\dots\w h_k$ for $v$ the
vertex and  $h_1,\dots,h_k$ the half edges in their cyclic order starting at the first incoming half-edge. 
The unit and the trace are exceptional graphs with a canonical positive orientation. 
For $l_k$, we take the orientation $w\w h_1\w\dots\w h_k$ for $w$ the vertex,  $h_1,\dots,h_k$ the half edges in their cyclic order
starting at the start half-edge.

 Figure~\ref{prodsign} gives as an example the
convention we will use for the product in an algebra. \vspace{0.4cm}
\begin{figure}[h]
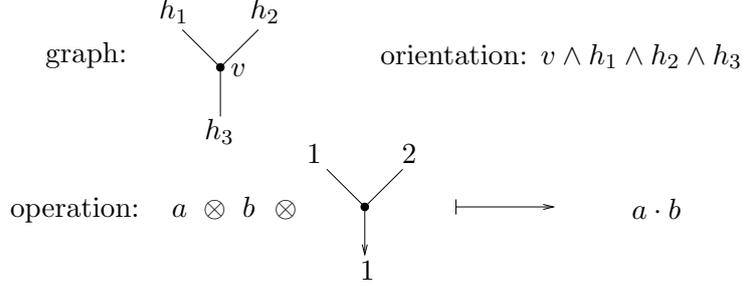

\begin{lpic}{prodsign(0.5,0.5)}
 \lbl[b]{-25,50;graph:}
 \lbl[b]{100,50;orientation: $v\w h_1\w h_2\w h_3$}
 \lbl[b]{-28,9;operation:}
 \lbl[b]{14,10;$a\ \ot\  b\ \ \ot $}
 \lbl[b]{125,10;$a\cdot b$}
 \lbl[b]{15,48;$v$}
 \lbl[b]{-2,62;$h_1$}
 \lbl[b]{22,62;$h_2$}
 \lbl[b]{10,30;$h_3$}
\lbl[b]{35,25;$1$}
\lbl[b]{60,25;$2$}
\lbl[b]{49,-6;$1$}
\end{lpic}
\caption{Sign convention for the product}\label{prodsign}
\end{figure}

Given a graph $\Ga$, we first need to write it as a composition of the chosen generating operations. This means choosing an orientation
of the internal edges and an ordering of the vertices, possibly introducing new vertices together with
unit or trace operations, and possibly using the symmetries of the category. (See Figure~\ref{composition} below for an example, and the proof of Proposition~\ref{positive_prop} for the case of $\OO^b$.)
Suppose $\Ga$ has vertices $v_1,\dots,v_k$ with half-edges $h_1^i,\dots,h^i_{n_i}$ at $v_i$ and $v_i\w h_1^i\w\dots\w h_{n_i}^i$ the chosen
orientation of the (chosen) operation $\mu_i$ associated to $v_i$. 
To interpret $\Ga$ as a composition of the operation at $v_1$, then at $v_2$ etc. requires writing the orientation of $\Ga$ as 
$\pm (v_1\w h_1^1\w\dots\w h_{n_1}^1)\w\dots\w (v_k\w h_1^k\w\dots\w h_{n_k}^k)$. 

\smallskip

Suppose we start from
$$a_1\ot\dots\ot a_n\ot (\Ga,o_d(\Ga))$$ in $A^{\ot n}\ot \OO_d(n,m)$, with  $\Ga$ as above and 
$$o_d(\Ga)=(v_1\w h_1^1\w\dots\w h_{n_1}^1)\w\dots\w
(v_k\w h_1^k\w\dots\w h_{n_k}^k)\ot \det(\Ga,\del_{out})^{\ot d}.$$ We rewrite this (with a Koszul sign!) as 
{\small $$a_1\ot\dots\ot a_n\ot\big((v_1\w h_1^1\w\dots\w h_{n_1}^1)\ot \det(\mu_1)^{\ot d}\big)\ot\dots\ot 
\big((v_k\w h_1^k\w\dots\w h_{n_k}^k)\ot \det(\mu_k)^{\ot d}\big)$$} 
 in 
$A^{\ot n}\ot \OO_d(n,p_1)\ot\dots\ot\OO_d(p_r,m)$, from which we can apply the first operation and then the next etc.
The final sign for the operation will come, in addition, from the signs occurring when using the symmetries in the category.

If the graph was an operation in $\OC_d$ instead, that is if we start with 
$$(a_0^1\ot\dots \ot a^1_{k_1}\ot l_{k_1})\ot\dots\ot (a_0^{n_1}\ot\dots \ot a^{n}_{k_{n}}\ot l_{k_n})\ot\  b_1\ot\dots\ot b_{m} \ot
(\Ga,o_d(\Ga))$$ in $C(A,A)^{\ot n})\ot A^{\ot m}\ot \OC_d(\oc{m}{n},\oc{m'}{n'})$, the principle is the same, but we have in addition to apply the procedure described in Section~\ref{ks_section}. 

\smallskip

We now give an explicit example with a graph of $\OO_d(2,1)$ which is used in the computations at the end of section \ref{strict_section}. 
In Figure~\ref{composition}, we give a graph with a choice of ordering of its vertices $v_1,v_2,v_3$, and a choice of orientation of
its internal edges $e_1,e_2,e_3$. We choose the orientation of the graph that corresponds to writing it as a composition of the operation
attached to $v_1$ (a coproduct), followed by the operation attached to $v_2$ and then $v_3$ (both products). Explicitly, it is given as 
$$(v_1\w h_1\w e_1\w e_2)\w(v_2\w \bar e_2\w h_2 \w e_3)\w(v_3\w \bar e_1\w \bar e_3 \w \bar h_1)$$
where $e_i$ and $\bar e_i$ are the start and end half-edges of $e_i$, $h_i$ is the $i$th incoming leaf, and $\bar h_1$ is the outgoing
leaf. 
\begin{figure}[h]
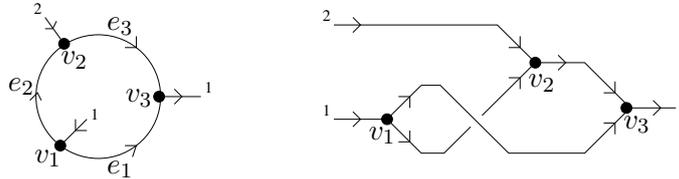

\begin{lpic}{composition(0.5,0.5)}
 \lbl[b]{5,-2;$v_1$}
 \lbl[b]{12,24;$v_2$} 
 \lbl[b]{29,14;$v_3$}
 \lbl[b]{93,4;$v_1$}
 \lbl[b]{135,18;$v_2$} 
 \lbl[b]{160,7;$v_3$}
 \lbl[b]{24,-5;$e_1$}
 \lbl[b]{-2,17;$e_2$} 
 \lbl[b]{24,33;$e_3$}
\end{lpic}
\caption{Writing a graph as a composition}\label{composition}
\end{figure}

The graph has relative Euler characteristic $\chi(\Ga,\del_{out})=-1$ which is also the relative Euler characteristic $\det(c)$ 
of the coproduct, while the products have
trivial relative Euler characteristic. As the products have degree 0, moving the determinant past the products does not produce a sign
and the operation associated to $\Ga$ with the above orientation is that of the composition 
 $$\big(((v_1\w h_1\w e_1\w e_2)\ot (\det{c})^{\ot d})\oplus id \big)\ot (\tau \oplus id)  \ot (v_2\w \bar e_2\w h_2 \w e_3)\ot(v_3\w \bar e_1\w \bar e_3 \w \bar h_1)$$
in $(\OO_d(1,2)\oplus \OO_d(1,1))\ot \OO_d(3,3)\ot \OO_d(3,2) \ot \OO_d(2,1)$, where $\tau$ denotes the twist map. 

The succession of operations (a comultiplication, a twist and two multiplications) applied to an pair $a\ot b$ is 
$$\begin{array}{lcl}a\ot b & \mapsto &(-1)^{|b|d}\sum a'\ot a''\ot b \\
& \mapsto & (-1)^{|b|d+|a'||a''|}\sum a''\ot a'\ot b \\
& \mapsto & (-1)^{|b|d+|a'||a''|}\sum a''\ot a' b \\
& \mapsto & (-1)^{|b|d+|a'||a''|}\sum a''a'b
\end{array}$$

\bibliography{biblio}

\end{document}